\newcommand{\defi}[1]{{\upshape\sffamily #1}}
\renewcommand{\a}{\alpha}
\renewcommand{\b}{\beta}
\renewcommand{\d}{\delta}
\newcommand{\D}{\mathcal{D}}
\newcommand{\bw}{\bigwedge}
\renewcommand{\det}{\textrm{det}}
\newcommand{\G}{\Gamma}
\renewcommand{\ll}{\lambda}
\newcommand{\om}{\omega}
\newcommand{\oo}{\otimes}
\newcommand{\s}{\sigma}
\newcommand{\Der}{\operatorname{Der}}
\newcommand{\GL}{\operatorname{GL}}
\newcommand{\Lie}{\operatorname{Lie}}
\newcommand{\Pf}{\operatorname{Pf}}
\newcommand{\SL}{\operatorname{SL}}
\newcommand{\Spec}{\operatorname{Spec}}
\newcommand{\Sym}{\operatorname{Sym}}
\newcommand{\codim}{\operatorname{codim}}
\renewcommand{\det}{\operatorname{det}}
\newcommand{\dom}{\operatorname{dom}}
\newcommand{\gr}{\operatorname{gr}}
\newcommand{\htt}{\operatorname{ht}}
\newcommand{\opmod}{\operatorname{mod}}
\newcommand{\op}{\operatorname{op}}
\newcommand{\sdet}{\operatorname{sdet}}
\newcommand{\sgn}{\operatorname{sgn}}
\renewcommand{\skew}{\operatorname{skew}}
\newcommand{\symm}{\operatorname{symm}}
\newcommand{\bb}[1]{\mathbb{#1}}
\renewcommand{\rm}[1]{\textrm{#1}}
\newcommand{\mc}[1]{\mathcal{#1}}
\newcommand{\mf}[1]{\mathfrak{#1}}
\newcommand{\ol}[1]{\overline{#1}}
\newcommand{\tl}[1]{\tilde{#1}}
\newcommand{\ul}[1]{\underline{#1}}
\newcommand{\scpr}[2]{\left\langle #1,#2 \right\rangle}
\def\lra{\longrightarrow}
\newtheorem{theorem}{Theorem}[section]
\newtheorem*{theorem*}{Theorem}
\newtheorem*{problem*}{Problem}
\newtheorem{lemma}[theorem]{Lemma}
\newtheorem{proposition}[theorem]{Proposition}
\newtheorem*{corollary*}{Corollary}
\newtheorem*{chargenl*}{Theorem on Equivariant $\D$-modules on General Matrices}
\newtheorem*{charskewsym*}{Theorem on Equivariant $\D$-modules on Skew-symmetric Matrices}
\newtheorem*{charsym*}{Theorem on Equivariant $\D$-modules on Symmetric Matrices}
\theoremstyle{definition}
\newtheorem{definition}[theorem]{Definition}
\newtheorem*{definition*}{Definition}
\newtheorem{example}[theorem]{Example}
\theoremstyle{remark}
\newtheorem{remark}[theorem]{Remark}
\newtheorem*{remark*}{Remark}
\numberwithin{equation}{section}
\begin{document}

\title{Characters of equivariant $\D$-modules on spaces of matrices}

\author{Claudiu Raicu}
\address{Department of Mathematics, University of Notre Dame, 255 Hurley, Notre Dame, IN 46556\newline
\indent Institute of Mathematics ``Simion Stoilow'' of the Romanian Academy}
\email{craicu@nd.edu}

\subjclass[2010]{Primary 14F10, 13D45, 14M12}

\date{\today}

\keywords{Equivariant $\D$-modules, rank stratifications, local cohomology}

\begin{abstract} We compute the characters of the simple $\GL$-equivariant holonomic $\D$-modules on the vector spaces of general, symmetric and skew-symmetric matrices. We realize some of these $\D$-modules explicitly as subquotients in the pole order filtration associated to the determinant/Pfaffian of a generic matrix, and others as local cohomology modules. We give a direct proof of a conjecture of Levasseur in the case of general and skew-symmetric matrices, and provide counterexamples in the case of symmetric matrices. The character calculations are used in subsequent work with Weyman to describe the $\D$-module composition factors of local cohomology modules with determinantal and Pfaffian support.
\end{abstract}

\dedicatory{Dedicated to Jerzy Weyman, on the occasion of his 60th birthday}
\maketitle

\section{Introduction}\label{sec:intro}

When $G$ is an algebraic group acting on a smooth algebraic variety $X$ over $\bb{C}$, it is a natural problem to describe the simple $G$-equivariant holonomic $\D$-modules on $X$. When $G$ acts with finitely many orbits, all such $\D$-modules have regular singularities, and they are classified via the Riemann--Hilbert correspondence by the $G$-equivariant simple local systems on the orbits of the group action. Describing these $\D$-modules explicitly is however a difficult problem (see Open Problem 3 in \cite[Section~6]{mac-vil}, and \cite{vilonen}). In this paper we consider the case when $X$ is a vector space of matrices (general, symmetric, or skew-symmetric), and $G$ is a natural rank preserving group of symmetries. In all these cases $G$ is a reductive group and the $\D$-modules are \defi{$G$-admissible} representations (they decompose into a direct sum of irreducible representations, each appearing with finite multiplicity). The purpose of this paper is to describe these representations (which we will refer to as the \defi{characters} of the equivariant $\D$-modules) and to realize these $\D$-modules explicitly. The motivation for this work is two-fold:
\begin{itemize}
 \item {\bf Computing local cohomology.} In \cites{raicu-weyman,raicu-weyman-witt,raicu-weyman-loccoh} we describe the characters, and the $\D$-module composition factors of the local cohomology modules $\mc{H}_Y^{\bullet}(X,\mc{O}_X)$ in the case when $X$ is a space of matrices (general, symmetric, or skew-symmetric), and $Y$ is any orbit closure for the natural group action on $X$. We expect that the combination of $\D$-module and commutative algebra techniques that we employ to study local cohomology in the case of matrices will apply to other cases of interest \cite[Appendix]{levasseur}. We note that character calculations in the context of analyzing local cohomology modules appear also in \cites{kempf,VdB:loccoh}: in both cases, the representations are $T$-admissible for $T$ a maximal torus in $G$; the equivariant $\D$-modules that we study in this paper are $G$-admissible, but in general they are too large to be $T$-admissible.
 
 \item {\bf Levasseur's conjecture.} For a class of multiplicity-free $G$-representations $X$, Levasseur conjectured \cite[Conjecture~5.17]{levasseur} an equivalence between the category $\mc{C}$ of equivariant holonomic $\D$-modules whose characteristic variety is a union of conormal varieties to the orbits of the group action, and a module category admitting a nice quiver description. His formulation is equivalent to the fact that any simple $\D$-module $\mc{M}$ in $\mc{C}$ contains sections which are invariant under the action of the derived subgroup $G'=[G,G]$. Our character description provides a direct proof of this conjecture for general and skew-symmetric matrices, and yields counterexamples for symmetric matrices.
\end{itemize}
Our work complements the existing literature that studies the categories of $\D$-modules on rank stratifications \cites{nang-nxnmat,nang-skew} (see also \cite{braden-grinberg} for the corresponding categories of perverse sheaves), in that we realize concretely the simple objects of these categories and discuss some applications, filling some gaps in the arguments and generally painting a more transparent picture. To give a flavor of the level of concreteness that we seek, we begin with the following ($\bb{Z}^n_{\dom}$ denotes the set of \defi{dominant weights} $\ll=(\ll_1\geq\cdots\geq\ll_n)\in\bb{Z}^n$, and $S_{\ll}$ denotes the \defi{Schur functor} associated to $\ll$; throughout the paper we use the convention $\ll_s=\infty$ for $s\leq 0$, $\ll_s=-\infty$ for $s>n$):

\begin{theorem}\label{thm:nxnbasicthm}
 Let $X=\bb{C}^{n\times n}$ be the vector space of $n\times n$ matrices, and let $S=\bb{C}[x_{i,j}]$ be the coordinate ring of $X$. If we write $\det=\det(x_{i,j})$, and let $S_{\det}$ be the localization of $S$ at $\det$, then we have a filtration
 \[0\subsetneq S\subsetneq\langle\det^{-1}\rangle_{\D}\subsetneq\cdots\subsetneq\langle\det^{-n}\rangle_{\D}=S_{\det},\]
 where $F_s=\langle \det^{-s}\rangle_{\D}$ denotes the $\D$-submodule of $S_{\det}$ generated by $\det^{-s}$ for $s=0,\cdots,n$ (and $F_{-1}=0$). The successive quotients $A_s=F_s/F_{s-1}$, $s=0,\cdots,n$ are the simple $\GL_n(\bb{C})\times\GL_n(\bb{C})$-equivariant holonomic $\D$-modules on $X$ (for the natural action by row and column operations) and their characters are given by
 \[A_s=\bigoplus_{\substack{\ll\in\bb{Z}^n_{\dom} \\ \ll_s\geq s\geq\ll_{s+1}}}S_{\ll}\bb{C}^n\oo S_{\ll}\bb{C}^n.\]
\end{theorem}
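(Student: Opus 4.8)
The plan is to combine three ingredients: the representation theory of $S_{\det}$ as a $\GL_n\times\GL_n$-module, the structure of $S_{\det}$ as a $\D$-module, and Cauchy-type formulas describing how the differential operators act. First I would recall (via the Cauchy formula and its localized version, e.g.\ from work of Raicu--Weyman or de Concini--Procesi) that as a $\GL_n\times\GL_n$-representation one has $S = \bigoplus_{\ll\in\bb{Z}^n_{\dom},\,\ll_n\geq 0} S_\ll\bb{C}^n\oo S_\ll\bb{C}^n$ and, after inverting $\det$, the full localization decomposes as $S_{\det} = \bigoplus_{\ll\in\bb{Z}^n_{\dom}} S_\ll\bb{C}^n\oo S_\ll\bb{C}^n$ (multiplying by $\det^{-s}$ shifts $\ll$ by $(-s,\dots,-s)$, so no positivity constraint remains). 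The heart of the matter is then to identify, inside this decomposition, exactly which isotypic components lie in the $\D$-submodule $F_s=\langle\det^{-s}\rangle_\D$.

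Next I would carry out the key computation: determine the weights appearing in $F_s$. Since $F_s$ is generated over $\D$ by $\det^{-s}$, and the $\D$-module generated by a highest weight vector is spanned by the images of that vector under products of the operators $x_{i,j}$ and $\pd_{i,j}$, one reduces to understanding which $S_\ll\bb{C}^n\oo S_\ll\bb{C}^n$ can be reached from the copy corresponding to $\ll=(-s,\dots,-s)$. The clean way to package this is: multiplication operators only increase partitions in dominance-type order while the operators built from the $\pd_{i,j}$ (equivalently, the Pfaffian/determinantal ``lowering'' operators such as $\det(\pd_{i,j})$ and its minors) decrease them, and the $\GL_n\times\GL_n$-equivariance forces the multiplicity-one components to be hit or missed wholesale. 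I would argue that $\det^{-s}\in F_s$ generates precisely those $S_\ll\bb{C}^n\oo S_\ll\bb{C}^n$ with $\ll_{s+1}\leq s$ (the localization of $S/\det^{s}S$-type behaviour bounds the tail of $\ll$ from above by $s$), while the requirement that $\ll$ is \emph{not} already in $F_{s-1}$ is exactly $\ll_s\geq s$. Combining, $A_s=F_s/F_{s-1}$ has character $\bigoplus_{\ll_s\geq s\geq\ll_{s+1}}S_\ll\bb{C}^n\oo S_\ll\bb{C}^n$; in particular these strata are nonempty and exhaust $S_{\det}$, giving the stated filtration with strict inclusions.

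Finally I would verify the $\D$-module claims: that each $A_s$ is simple, that they are pairwise non-isomorphic, and that together they account for \emph{all} simple $\GL_n\times\GL_n$-equivariant holonomic $\D$-modules on $X$. Simplicity and the exhaustiveness follow from the orbit structure — the action has exactly $n+1$ orbits (by rank $0,1,\dots,n$), each with trivial equivariant fundamental group, so by the Riemann--Hilbert correspondence there are exactly $n+1$ simple equivariant $\D$-modules, matching the $n+1$ subquotients $A_s$; non-isomorphism is immediate from the disjointness of their character supports. One should also check that $F_s\neq F_{s-1}$ and that $\langle\det^{-s}\rangle_\D\subseteq\langle\det^{-s-1}\rangle_\D$, both of which drop out of the character description once established. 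The main obstacle, I expect, is the central character computation: showing precisely that $\det^{-s}$ generates the components with $\ll_{s+1}\leq s$ and nothing more. This requires controlling the $\D$-action on the full localization $S_{\det}$ at the level of isotypic components — likely via an explicit analysis of how the ``Cayley'' operators $\det(\pd_{i,j})$ and the mixed operators act on Schur components (a $b$-function / Capelli-identity type argument), or by an inductive restriction to smaller matrices — and getting the two inequalities $\ll_s\geq s\geq\ll_{s+1}$ to emerge cleanly is the crux.
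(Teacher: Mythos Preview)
Your overall logical structure is sound: once you know the character of each $F_s$, the partition of $\bb{Z}^n_{\dom}$ into the strata $\{\ll:\ll_s\geq s\geq\ll_{s+1}\}$ together with multiplicity-freeness of $S_{\det}$ and the count of $n+1$ orbits (all with connected stabilizers) does force each $F_s/F_{s-1}$ to be simple. The gap is exactly where you locate it: you have not actually shown that $F_s$ consists precisely of the isotypic components with $\ll_{s+1}\leq s$. Your suggested tools (Capelli/Cayley identities, dominance-order arguments on how $x_{ij}$ and $\partial_{ij}$ move between components) establish the easy inclusions --- Cayley's identity gives $F_{s-1}\subset F_s$ and hence the chain, and the $b$-function gives $F_n=S_{\det}$ --- but they do not by themselves pin down $F_s$ exactly. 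Showing directly that no differential operator can push $\det^{-s}$ into a component with $\ll_{s+1}>s$ is genuinely delicate, and nothing in your sketch does it.

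The paper avoids this computation entirely by reversing the order of the argument. It first establishes, independently of the filtration, that the simple equivariant $\D$-module $\mc{L}(M_{n-s},M)$ has character $\mf{A}_s=\bigoplus_{\ll_s\geq s\geq\ll_{s+1}}S_\ll\oo S_\ll$. This is done by computing the Euler characteristic of the $\D$-module pushforward of $\mc{O}$ from the rank-$k$ orbit (via a desingularization over a product of Grassmannians), expressing it as a limit $\lim_{r\to\infty}p_{k,r}(V_1)\oo p_{k,r}(V_2)\oo E$ in the Grothendieck group of admissible representations, and then using a Fourier-transform symmetry to force an upper-triangular change-of-basis matrix to be the identity. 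Once the characters $\mf{A}_s$ of the simples are known, the identification $A_s=F_s/F_{s-1}$ is a short induction: the composition factors of $S_{\det}$ are $A_0,\dots,A_n$ (each once, by multiplicity-freeness); assuming $F_{s-1}$ has factors $A_0,\dots,A_{s-1}$, any simple submodule of $S_{\det}/F_{s-1}$ is some $A_i$ with $i\geq s$, and by its character $A_i$ contains the class of $\det^{-i}$, hence (multiplying by $\det$) also $\det^{-i+1},\det^{-i+2},\dots$; if $i>s$ this puts $\det^{-s}$ in $A_i$, contradicting the character of $A_i$. So $i=s$, and since $A_s$ is simple and contains the class of $\det^{-s}$, it equals $F_s/F_{s-1}$.

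In short: your route would require proving the character of $F_s$ directly, which you have not done; the paper instead proves the characters of the simples by a geometric Euler-characteristic calculation and deduces the filtration structure as a corollary.
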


In the case of symmetric matrices, the $\D$-modules obtained as in Theorem~\ref{thm:nxnbasicthm} cover roughly half of the simple equivariant $\D$-modules. The remaining half are more mysterious, and they provide counterexamples to \cite[Conjecture~5.17]{levasseur}. In the case of $m\times n$ matrices with $m>n$, as well as in the case of skew-symmetric matrices of odd size, the simple equivariant $\D$-modules arise as local cohomology modules, while in the case of skew-symmetric matrices of even size the simple equivariant $\D$-modules arise, just as in Theorem~\ref{thm:nxnbasicthm}, from the pole order filtration associated with the Pfaffian of the generic skew-symmetric matrix. Most of our simple $\D$-modules have irreducible characteristic variety, but for roughly half of the ones arising from symmetric matrices the characteristic variety has two connected components: this is deduced in Remark~\ref{rem:charvarieties} as a consequence of the character information.

As suggested by Theorem~\ref{thm:nxnbasicthm}, one motivation behind our investigation is that the simple $\D$-modules are the building blocks for many $\D$-modules of interest that one would like to understand. More precisely, every holonomic $\D$-module $\mc{M}$ has finite length, i.e. it has a finite filtration (\defi{composition series}) whose successive quotients (\defi{composition factors}) are simple holonomic $\D$-modules. When $G$ is connected and $\mc{M}$ is $G$-equivariant, the composition factors are also $G$-equivariant \cite[Prop.~3.1.2]{VdB:loccoh}. We are mainly interested in two types of $G$-equivariant holonomic $\D$-modules:
\begin{itemize}
 \item {\bf Local cohomology modules.} If $Y\subset X$ is a $G$-invariant subset, then the local cohomology modules $\mc{H}_Y^{\bullet}(X,\mc{O}_X)$ are $G$-equivariant $\D$-modules. If $Y$ is smooth and irreducible, and if we write $c=\codim_X(Y)$ for the codimension of $Y$ inside $X$, then $\mc{H}_Y^c(X,\mc{O}_X)$ is the unique non-vanishing local cohomology module and it is simple. In general, for an irreducible subvariety $Y\subset X$ one can define an \defi{intersection homology} $\D$-module $\mc{L}(Y,X)$ which is simple (and it is $G$-equivariant when $Y$ is a $G$-subvariety), and we have an inclusion $\mc{L}(Y,X)\subset\mc{H}_Y^c(X,\mc{O}_X)$, whose cokernel is suported on a proper subset of $Y$. The case when $X=\bb{C}^{n\times n}$ and $Y$ is the subvariety of singular matrices is implicitly described in Theorem~\ref{thm:nxnbasicthm}: $c=1$, $\mc{L}(Y,X)=A_1$, $\mc{H}^1_Y(X,\mc{O}_X)=S_{\det}/S$, and the cokernel $\mc{H}^1_Y(X,\mc{O}_X)/\mc{L}(Y,X)$ has composition factors $A_2,\cdots,A_n$. In general, the local cohomology modules $\mc{H}_Y^i(X,\mc{O}_X)$ for $i\neq c$ may be non-zero, but they are all supported on proper subsets of $Y$: it is an interesting problem to decide their (non)vanishing, or at a more refined level to understand their $\D$-module composition factors.
 
 \item {\bf The $\D$-module (generated by) $f^{\a}$.} For a non-zero polynomial $f\in S=\bb{C}[x_1,\cdots,x_N]$ and a complex number $\a$, we can define $\langle f^{\a}\rangle_{\D}$ -- the (holonomic) $\D$-module generated by $f^{\a}$ (see \cite{walther-fs} for a recent survey). A strict inclusion $\langle f^{\a+1}\rangle_{\D}\subsetneq\langle f^{\a}\rangle_{\D}$ implies that $\a$ is a root of the Bernstein-Sato polynomial of $f$ (this can happen only when $\a$ is rational and negative \cite{kashiwara-bs}). It is an interesting question to decide whether each root $\a$ gives rise to such a strict inclusion \cite[Question~2.1]{walther-fs}, \cite[Question~1,~Section~4]{saito}. More generally, one may be interested in the composition factors of $\langle f^{\a}\rangle_{\D}$. For $\a\in\bb{Z}$ and $f=\det$ this is completely answered by Theorem~\ref{thm:nxnbasicthm}. When $\a\notin\bb{Z}$, $\langle\det^{\a}\rangle_{\D}$ is a simple $\D$-module (see the proof of Theorem~\ref{thm:nonequivLa}). Similar conclusions are obtained when $f$ is the symmetric determinant, or the Pfaffian of a skew-symmetric matrix of even size.
\end{itemize}

Before stating our results in more detail, we give a simple example to illustrate how character calculations alone can allow one to determine the $\D$-module composition factors.
\begin{example}\label{ex:TactsCN}
 Let $X=\bb{C}^N$ be the $N$-dimensional affine space, and let $G=(\bb{C}^*)^N$ be the $N$-dimensional torus. The orbits $X_I$ of the $G$-action are indexed by subsets $I\subset[N]=\{1,\cdots,N\}$, where
 \[X_I=\{x\in\bb{C}^N:x_i\neq 0\rm{ if and only if }i\in I\}.\]
 The stabilizer of each $X_I$ is connected, so there is a one-to-one correspondence between orbits and simple $G$-equivariant holonomic $\D$-modules $D_I$ (Theorem~\ref{thm:equivRH}, Remark~\ref{rem:Ltrivial}), given by $D_I=\mc{L}(Y_I,X)$, where $Y_I=\ol{X_I}$ is the corresponding orbit closure. Since $Y_I$ is an affine space of codimension $N-|I|$, it is in particular smooth, and therefore the $\D$-module $D_I$ is just a local cohomology module $D_I=H_{Y_I}^{N-|I|}(X,\mc{O}_X)$. If we write $S=\bb{C}[x_1,\cdots,x_N]$ for the coordinate ring of $X$, then each $Y_I$ is defined by the ideal generated by the variables $x_j$, $j\notin I$. Using the \v Cech complex description of local cohomology we get
 \[D_I=\bigoplus_{\substack{\ll\in\bb{Z}^N \\ \ll_i\geq 0\rm{ if and only if }i\in I}} \bb{C}\cdot x_1^{\ll_1}\cdots x_N^{\ll_N}\]
 which is a decomposition into irreducible $G$-representations. If we take $f=x_1\cdots x_N$ then we get
 \[S_f = \bigoplus_{\ll\in\bb{Z}^N}\bb{C}\cdot x_1^{\ll_1}\cdots x_N^{\ll_N}.\]
 The torus weights appearing in the $D_I$'s form a partition of those appearing in $S_f$, so each $D_I$ appears as a $\D$-module composition factor of $S_f$ with multiplicity one. Using a similar argument for $X=\bb{C}^{n\times n}$ we obtain a proof of Theorem~\ref{thm:nxnbasicthm} (see Section~\ref{sec:mxnmatrices}).
\end{example}

\subsection*{Symmetric matrices}

Our results run in parallel for the three spaces of matrices (general, symmetric, and skew-symmetric). We have made the effort to apply a uniform strategy to all three cases, but we weren't able to treat the combinatorial details uniformly. For the sake of brevity, we have chosen to treat only the case of symmetric matrices in full detail, and only indicate the changes that are required in the other cases. Two features that make the case of symmetric matrices more interesting are: (a) the presence of non-trivial equivariant local systems; (b) the existence of counterexamples to Levasseur's conjecture.

For each positive integer $n$ and for $s=0,\cdots,n$, we consider the collections of dominant weights
\begin{equation}\label{eq:defmcC^isn}
\begin{aligned}
\mc{C}^1(s,n)&=\{\ll\in\bb{Z}^{n}_{\dom}:\ll_i\overset{(\opmod\ 2)}{\equiv} s+1\rm{ for }i=1,\cdots,n,\ll_{s}\geq s+1\geq\ll_{s+2}\},\\
\mc{C}^2(s,n)&=\left\{\ll\in\bb{Z}^{n}_{\dom}:\ll_i\overset{(\opmod\ 2)}{\equiv}
\begin{cases}
 s+1 & \rm{for }i=1,\cdots,s\\
 s & \rm{for }i=s+1,\cdots,n
\end{cases},
\ll_{s}\geq s+1,\ll_{s+1}\leq s
\right\}.
\end{aligned}
\end{equation}
Note that $\mc{C}^1(n,n)=\mc{C}^2(n,n)$. For a positive integer $n$, we identify $\Sym^2\bb{C}^n$ with the vector space $M^{\symm}$ of $n\times n$ symmetric matrices, where the squares $w^2$, $w\in\bb{C}^n$, correspond to matrices of rank at most one. We write $M^{\symm}_i$ for the subvariety of matrices of rank at most $i$. For $s=0,\cdots,n$, and $j=1,2$, we define
\[\mf{C}^j_s=\bigoplus_{\ll\in\mc{C}^j(s,n)} S_{\ll}\bb{C}^n.\]

\begin{charsym*}[Section~\ref{sec:symm}]
There exist $(2n+1)$ simple $\GL_n(\bb{C})$-equivariant holonomic $\D$-modules on $M^{\symm}$, whose characters are $\mf{C}^j_s$, $s=0,\cdots,n$, $j=1,2$. More precisely, if we denote by $C^j_s$ the $\D$-module with character $\mf{C}^j_s$ then $C^1_n=C^2_n=\mc{L}(\{0\},M^{\symm})$ is the simple holonomic $\D$-module supported at the origin, and for $s<n$
\[
 C^j_s=\begin{cases}
 \mc{L}(M^{\symm}_{n-s},M^{\symm}) & \rm{if }j\equiv s\ (\opmod\ 2), \\
 \mc{L}(M^{\symm}_{n-s},M^{\symm};1/2) & \rm{if }j\equiv s+1\ (\opmod\ 2). \\
\end{cases}
\]
Here $\mc{L}(M^{\symm}_{n-s},M^{\symm})$ is the usual intersection homology $\D$-module, while $\mc{L}(M^{\symm}_{n-s},M^{\symm};1/2)$ is the intersection homology $\D$-module associated to the non-trivial irreducible $\GL_n(\bb{C})$-equivariant local system on the orbit of rank $(n-s)$ matrices.

We let $S=\bb{C}[x_{i,j}]$ be the coordinate ring of $M^{\symm}$, where $x_{i,j}=x_{j,i}$. We write $\sdet=\det(x_{i,j})$ for the determinant of the generic symmetric matrix, and let $S_{\sdet}$ be the localization of $S$ at $\sdet$. We consider $F_s=\langle \sdet^{-s/2}\rangle_{\D}$, the $\D$-submodule of $S_{\sdet}$ (or of $S_{\sdet}\cdot\sdet^{1/2}$) generated by $\sdet^{-s/2}$ for $s=0,\cdots,n+1$ (and $F_{-1}=0$). We have that $C^2_0=F_0=S$, and $C^1_s=F_{s+1}/F_{s-1}$ for $s=0,\cdots,n$.
\end{charsym*}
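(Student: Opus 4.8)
My proof would proceed in four stages. First, I would apply the equivariant Riemann--Hilbert correspondence (Theorem~\ref{thm:equivRH}). The $\GL_n(\bb{C})$-orbits on $M^{\symm}$ are the rank strata $M^{\symm}_i\setminus M^{\symm}_{i-1}$, $i=0,\dots,n$, so it is enough to compute the component group of the stabilizer of a representative. For $\mathrm{diag}(I_i,0)$ the stabilizer consists of the block-upper-triangular matrices with an $O_i(\bb{C})$-block and a $\GL_{n-i}(\bb{C})$-block, so $\pi_0$ is trivial for $i=0$ and $\bb{Z}/2\bb{Z}$ for $i\ge 1$; this already gives the count $2n+1$ and identifies the simple objects with the intersection homology $\D$-modules $\mc{L}(\{0\},M^{\symm})$ and $\mc{L}(M^{\symm}_i,M^{\symm})$, $\mc{L}(M^{\symm}_i,M^{\symm};1/2)$ for $i=1,\dots,n$. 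In parallel I would record the $\GL_n$-characters of the ambient modules from Cauchy's formula $\Sym(\Sym^2\bb{C}^n)=\bigoplus_{\ll}S_{2\ll}\bb{C}^n$ and the fact that $\sdet$ spans a one-dimensional representation: $S=F_0$ is the sum of the $S_\mu\bb{C}^n$ over dominant $\mu$ with all parts even and $\mu_1\le 0$ (which is exactly $\mf{C}^2_0$), while $S_{\sdet}$ (resp.\ $S_{\sdet}\cdot\sdet^{1/2}$) is the sum over all dominant $\mu$ with all parts even (resp.\ odd). According to the parity of $s$, and because $\sdet\cdot\sdet^{-(s+2)/2}=\sdet^{-s/2}$, the $F_s$ split into two interleaved ascending chains exhausting $S_{\sdet}$ and $S_{\sdet}\cdot\sdet^{1/2}$, with successive quotients $C^2_0=S$ (the bottom of the first chain) and the $C^1_s=F_{s+1}/F_{s-1}$.

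Second, I would analyze these filtrations. The functional equation $P(\pd)\,\sdet^{t+1}=b_{\sdet}(t)\,\sdet^t$ with $b_{\sdet}(t)=\prod_{k=1}^{n}\bigl(t+\tfrac{k+1}{2}\bigr)$ shows a jump $F_{s-2}\subsetneq F_s$ can occur only for $s=2,\dots,n+1$, and a local computation near a generic point of the rank-$(n-s)$ orbit (where $\sdet$ is, up to a unit, one coordinate $u$, and $\langle u^{-a}\rangle_{\D}$ depends only on the class of $a$ mod $1$) shows the jump does occur there, that $F_{s+1}/F_{s-1}$ is supported on $M^{\symm}_{n-s}$, and that the top quotient $C^1_n=F_{n+1}/F_{n-1}$ is supported at the origin. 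Since $S_{\sdet}$ and $S_{\sdet}\cdot\sdet^{1/2}$ are multiplicity-free as $\GL_n$-representations, each $F_{s+1}/F_{s-1}$ is a cyclic, multiplicity-free $\D$-module, hence has a unique simple quotient; once its character is shown to be the full region $\mf{C}^1_s$ (the next stage), comparison with the classification---no other simple has character contained in $\mf{C}^1_s$, and $\mf{C}^1_s$ does not decompose as a union of characters of the remaining simples---forces $F_{s+1}/F_{s-1}$ to be simple, and its monodromy $(-1)^{s+1}$ around $\{\sdet=0\}$ pins it down as $\mc{L}(M^{\symm}_{n-s},M^{\symm})$ or $\mc{L}(M^{\symm}_{n-s},M^{\symm};1/2)$ exactly as asserted.

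The heart of the argument, and what I expect to be the main obstacle, is the third stage: computing the character of each $F_s$ inside the multiplicity-free ambient module, i.e.\ determining exactly which isotypic components $S_\mu\bb{C}^n$ lie in the $\D$-submodule generated by $\sdet^{-s/2}$ (the component with $\mu=(s,\dots,s)$). The inclusion ``$\supseteq$'' is constructive: one reaches every $\mu$ in the predicted region by travelling through the weight poset, applying the $\GL_n$-isotypic pieces of the multiplication and differentiation operators $x_{i,j}$, $\pd_{i,j}$, and, where necessary, the Cayley operator $\det(\pd)$. The inclusion ``$\subseteq$'' is the delicate part: one must prove that the vanishing of $b_{\sdet}$ at $t=-s/2$ is the \emph{only} obstruction to enlarging $F_s$. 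I would do this by induction on $s$, using the previously determined character of $F_{s-2}$ together with Capelli-type commutation relations among the operators to show that a component reachable from $\sdet^{-s/2}$ yet lying outside $\mc{C}^1(s,n)$ would force a component of $F_{s-2}$ outside $\mc{C}^1(s-2,n)$, contradicting the inductive hypothesis; keeping the combinatorics of these ``staircase'' regions under control is where the work lies. Subtraction then yields $\mathrm{ch}(C^1_s)=\mathrm{ch}(F_{s+1})-\mathrm{ch}(F_{s-1})=\mf{C}^1_s$ and $\mathrm{ch}(C^2_0)=\mf{C}^2_0$.

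Finally, the simple modules $C^2_s$ for $1\le s\le n-1$---the intersection homology $\D$-modules carrying the opposite equivariant local system, which by parity cannot appear in either localization---would be handled by the Fourier transform. Composing the Fourier transform $\Sym^2\bb{C}^n\to\Sym^2(\bb{C}^n)^{\ast}$ with the automorphism $g\mapsto(g^{-1})^{t}$ of $\GL_n(\bb{C})$ (which exchanges the two spaces) gives an auto-equivalence $\Phi$ of the category of $\GL_n(\bb{C})$-equivariant holonomic $\D$-modules on $M^{\symm}$; one checks $\Phi$ carries the rank-$r$ orbit closure to the rank-$(n-r)$ one, determines its effect on the $\bb{Z}/2\bb{Z}$-local systems, and records its (determinant-twisted) action $S_\mu\bb{C}^n\mapsto S_{\mu'}\bb{C}^n$ on $\GL_n$-characters. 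Applying $\Phi$ to $S=C^2_0$ and to the $C^1_{s'}$ computed in stage three produces the characters $\mf{C}^2_s$ for $s\ge 1$, completing both the character list and the dictionary between the $C^j_s$ and the intersection homology $\D$-modules (the leading, i.e.\ largest, dominant weight of each character detecting simultaneously the support and the local system).
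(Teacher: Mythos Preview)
Your first two stages are sound and parallel Theorem~\ref{thm:classificationDmods}. The fatal gap is in stage four: the Fourier transform does \emph{not} exchange the $C^1$-family with the $C^2$-family. One computes directly from (\ref{eq:defmcC^isn}) and (\ref{eq:defGFourierU}) (see the proof of Theorem~\ref{thm:symmetric}) that $\mc{F}(\mf{C}^1_s)=\mf{C}^1_{n-s}$ and $\mc{F}(\mf{C}^2_s)=\mf{C}^2_{n-s-1}$, so each family is separately Fourier-stable. Starting from $\{C^2_0,\,C^1_0,\dots,C^1_n\}$ and applying $\mc{F}$ you recover only $C^2_n=C^1_n=E$ and the $C^1$'s again; the modules $C^2_s$ for $1\le s\le n-1$ remain inaccessible. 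This is not accidental: every $\ll\in\mc{C}^2(s,n)$ with $1\le s\le n-1$ has parts of both parities, whereas $S_{\sdet}$ and $S_{\sdet}\cdot\sdet^{1/2}$ contain only $S_{\ll}W$ with all $\ll_i$ of a single parity, so these $C^2_s$ cannot be subquotients of either localization. They are precisely the simples of Remark~\ref{rem:failureLev} with no $\SL_n$-invariant sections, and no combination of the pole-order filtration and Fourier will produce them.

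The paper reaches the $C^2_s$ by a genuinely different mechanism. For each $k$ it resolves the rank-$k$ orbit closure via the Grassmannian $X_k=\bb{G}(k,V)$ and pushes forward from $Y_k$ \emph{two} $\D$-modules: $\mc{M}^0_k=\mc{O}_{Y^0_k}$ and its twist $\mc{M}^1_k=\mc{O}_{Y^0_k}\oo\det(\mc{Q})$, the latter arising from the \'etale double cover $Y_k^{1/2}$ defined fiberwise by $\sdet^{1/2}$ (see (\ref{eq:defM0symm})--(\ref{eq:defM1symm})). The Euler characteristics $\chi\bigl(\int_{\pi_k}\mc{M}^j_k\bigr)$ are evaluated by Proposition~\ref{prop:EuleraslimPr} and Proposition~\ref{prop:limsym} as limits of $p_{k,r}(V)\oo E$; it is the $\mc{M}^1_k$ computation that brings the $\mf{C}^2_s$ into play. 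Together these give upper-triangular relations $C^j_s=\mf{C}^j_s+(\text{higher terms})$. Fourier is then used \emph{not} to pass between families but as an order-reversing involution on the indexing poset, which via Lemma~\ref{lem:uppertriangularFourier} forces all off-diagonal coefficients to vanish. Only after the characters are established does the paper identify $C^1_s=F_{s+1}/F_{s-1}$, by the short argument of Theorem~\ref{thm:nxnbasicthm}; this reverses your logical order and sidesteps the direct Capelli-type computation of your stage three, whose ``$\subseteq$'' inclusion you rightly flag as the hard part and which the paper never attempts.
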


\begin{remark}\label{rem:failureLev}
 The $\D$-modules $C^2_s$ for $s=1,\cdots,n-1$ contain no $\SL_n(\bb{C})$-invariant sections, so they provide counterexamples to \cite[Conjecture~5.17]{levasseur}. It may be interesting to note that when $n\geq 3$, among these counterexamples there are the intersection homology $\D$-modules $\mc{L}(M^{\symm}_{n-s},M^{\symm})$ with $s$ even, so the failure of Levasseur's conjecture can't be solely explained by the presence of non-trivial local systems!
\end{remark}

\begin{remark}\label{rem:bsato}
 We can now give a quick derivation for the \defi{Bernstein-Sato polynomial} of $\sdet$ \cite[Appendix]{kimura}:
\begin{equation}\label{eq:bsatosdet}
b_{\sdet}(s)=\prod_{i=1}^n \left(s+\frac{1+i}{2}\right).
\end{equation}
It follows from Cayley's identity that $b_{\sdet}(s)$ divides $\prod_{i=1}^n \left(s+\frac{1+i}{2}\right)$, while for each $i=1,\cdots,n$ the strict inclusion $F_{i-1}\subsetneq F_{i+1}$ shows that $-\frac{1+i}{2}$ is a root of $b_{\sdet}(s)$. This is enough to conclude the equality (\ref{eq:bsatosdet}).
\end{remark}

\begin{remark}\label{rem:charvarieties}
 It is interesting to note that the character calculation allows us to determine the characteristic varieties for the $\D$-modules $C_s^j$. The Fourier transform $\mc{F}$ (see Section~\ref{subsec:Fourier}) permutes the $\D$-modules $C_s^j$, and ``rotates'' their characteristic varieties by $90^{\circ}$ (note that ``rotating'' the conormal variety to the orbit of rank $s$ matrices yields the conormal variety to rank $(n-s)$ matrices). The formula (\ref{eq:defGFourierU}) where $U=\bw^2\bb{C}^n$, together with (\ref{eq:defmcC^isn}), shows that $\mc{F}(C_s^1)=C_{n-s-1}^1$ for $s=0,\cdots,n-1$, and $\mc{F}(C_s^2)=C_{n-s}^2$ for $s=0,\cdots,n$. Since $C_s^1$ has support $M^{\symm}_{n-s}$ and $\mc{F}(C_s^1)$ has support $M^{\symm}_{s+1}$, it follows that the characteristic variety of $C_s^1$ has two components, namely the conormal varieties to the orbits of rank $(n-s)$ and rank $(n-s-1)$ matrices. Since $C_s^2$ has support $M^{\symm}_{n-s}$ and $\mc{F}(C_s^2)$ has support $M^{\symm}_s$, it follows that the characteristic variety of $C_s^2$ is irreducible, namely it is the conormal variety to the orbit of rank $(n-s)$ matrices. Similar considerations show that for general and skew-symmetric matrices, the characteristic varieties of the simple equivariant $\D$-modules are irreducible. The calculation of characteristic varieties can also be deduced from~\cite{braden-grinberg}.
\end{remark}

\subsection*{Strategy for computing the characters of equivariant $\D$-modules}\label{subsec:computingcharacters}

Our approach to computing characters of equivariant $\D$-modules is based on performing Euler characteristic calculations using the $\D$-module functoriality together with some combinatorial and geometric methods. More precisely, for the inclusion of an orbit $\iota:O\hookrightarrow X$, the $\D$-module direct image $\int_{\iota}\mc{O}_O$ is an object in the derived category of $G$-equivariant $\D_X$-modules, whose cohomology groups $\int^j_{\iota}\mc{O}_O$ are (in the cases that we study) $G$-admissible representations. Analyzing the inclusion $\iota$ directly is complicated, so we make use of a resolution of singularities $Z$ of the orbit closure $\ol{O}$. The variety $Z$ is a vector bundle over a Grassmannian $\bb{G}$ (or a product of Grassmannians), and the inclusion $j:O\hookrightarrow Z$ is an affine open immersion. The map $\pi:Z\to X$ factors as $p\circ s$
\[
\xymatrix{
O \ar@{^{(}->}[r]^{j} \ar@{=}[d] & Z \ar@{^{(}->}[r]^{s} \ar[d] \ar[dr]_{\pi} & X\times\bb{G} \ar[d]_{p} \\
O \ar@{^{(}->}[r] & \ol{O} \ar@{^{(}->}[r] & X \\
} 
\]
where $s$ is a regular embedding and $p$ is the projection onto the first factor. We compute the Euler characteristic of $\int_{\iota}\mc{O}_O$ as a virtual admissible $G$-representation, by using the factorization $\iota=p\circ s\circ j$. If we pretend that there is a one-to-one correspondence between simple equivariant $\D$-modules and orbits (which is true for general and skew-symmetric matrices), and write $X_s$ for the $\D$-module corresponding to matrices of rank $s$, then the Euler characteristic calculations together with general considerations regarding the structure of $\D$-module direct images, allow us to write down an upper-triangular matrix with ones on the diagonal, that represents the change of coordinates in the Grothendieck group of admissible representations, from $(X_s)_s$ to appropriately defined linearly independent characters $(\mf{X}_s)_s$. The Fourier transform on one hand preserves this matrix, and on the other hand it makes it lower-triangular, which allows us to conclude that the matrix is in fact the identity and therefore $X_s=\mf{X}_s$ for all $s$ (see Section~\ref{subsec:linalg}).

In the process of computing Euler characteristics, we are led to the following combinatorial problem. Let $X=\bb{G}(k,\bb{C}^n)$ be the Grassmannian of $k$-dimensional quotients of $\bb{C}^n$, with $\mc{O}_X(1)$ denoting the Pl\"ucker line bundle, and $\Omega^i_X$ denoting the sheaf of differential $i$-forms on $X$, and define the virtual $\GL_n(\bb{C})$-representation
\[p_{k,r}=\sum_{i=0}^{k\cdot(n-k)}(-1)^i\cdot\chi(X,\Omega_X^i(r)).\]
The problem is to compute $p_{k,r}\oo S_{\ll}\bb{C}^n$ for $\ll\in\bb{Z}^n_{\dom}$. When $k=1$, $p_{1,r}$ corresponds to the $r$-th power sum symmetric function, and the answer is given in \cite[Exercise~I.3.11(1)]{macdonald}. The relevance of this formula for computing Euler characteristics is as follows: if we write $E=\mc{L}(\{0\},X)$ for the simple holonomic $\D$-module supported at the origin, $O_k$ for the orbit of rank $k$ matrices, and $\iota_k$ for the inclusion of $O_k$ into the ambient space, then (up to minor adjustments, depending on which space of matrices we analyze)
\[\chi\left(\int_{\iota_k}\mc{O}_{O_k}\right)=\sum_{j\in\bb{Z}}(-1)^j\int_{\iota_k}^j\mc{O}_{O_k}=\lim_{r\to\infty}p_{k,r}\oo E,\]
where the limit is taken in the Grothendieck group of admissible representations (see Section~\ref{subsubsec:admissible} for a precise formulation, and Section~\ref{sec:lims} for the calculations).

\subsection*{Organization}

In Section~\ref{sec:prelim} we establish the notation and basic results concerning the representation theory of general linear groups and $\D$-modules that will be used throughout the rest of the paper. In Section~\ref{sec:lims} we compute the relevant Euler characteristics as limits in the Grothendieck group of $\GL$-admissible representations. In Sections~\ref{sec:symm},~\ref{sec:mxnmatrices}, and~\ref{sec:skew} we prove the main results on characters of equivariant $\D$-modules. Finally, in Section~\ref{sec:simplesrankstrat} we discuss the simple $\D$-modules that arise from non-equivariant local systems on the orbits, and prove Levasseur's conjecture for skew-symmetric and general matrices.

\section{Preliminaries}\label{sec:prelim}

\subsection{Representation Theory {\cite[Ch.~2]{weyman}}}\label{subsec:repthy}

Let $W$ be a complex vector space of dimension $\dim(W)=n$, and denote by $\GL(W)$ the group of invertible linear transformations of $W$. The irreducible finite dimensional $\GL(W)$-representations, denoted $S_{\ll}W$, are indexed by \defi{dominant weights} $\ll=(\ll_1\geq\cdots\geq\ll_n)\in\bb{Z}^n$. A dominant weight $\ll$ is said to be a \defi{partition} if all its \defi{parts} $\ll_1,\cdots,\ll_n$ are nonnegative. The \defi{size} of $\ll$ is $|\ll|=\ll_1+\cdots+\ll_n$. The \defi{conjugate partition} $\ll'$ is defined by transposing the associated Young diagram: $\ll'_i$ is the number of $j$'s for which $\ll_j\geq i$; for example $(5,2,1)'=(3,2,1,1,1)$. Write $[n]$ for the set $\{1,\cdots,n\}$, and for a given a subset $I\subset[n]$ and an integer $u$, let $(u^I)$ be the sequence $\mu\in\bb{Z}^n$ having $\mu_i=u$ when $i\in I$, and $\mu_i=0$ when $i\notin I$. When $I=[k]$ for $k\leq n$, we simply write $(u^k)$ instead of $(u^I)$. We have that $S_{(1^k)}W=\bw^k W$ is the $k$-th exterior power of $W$, and we let $\det(W)$ denote the top exterior power $\bw^n W$.

\subsubsection{Admissible representations}\label{subsubsec:admissible}

Given a reductive algebraic group $G$, we write $\Lambda$ for the set of (isomorphism classes of) finite dimensional irreducible $G$-representations. We will be mainly interested in the case when $G=\GL(W)$ is a general linear group: we write $\G(G)=\G(W)$, and $\Lambda=\{S_{\ll}W:\ll\in\bb{Z}^n_{\dom}\}$. We also consider $G=\GL(W_1)\times\GL(W_2)$, $\dim(W_1)=m$, $\dim(W_2)=n$, and write $\G(G)=\G(W_1,W_2)$ and $\Lambda=\{S_{\d}W_1\oo S_{\ll}W_2:\d\in\bb{Z}^m_{\dom},\ll\in\bb{Z}^n_{\dom}\}$. An \defi{admissible $G$-representation} decomposes as
\[M=\bigoplus_{L\in\Lambda} L^{\oplus a_L},\]
where each $a_L\in\bb{Z}_{\geq 0}$. We say that $M$ is \defi{finite} if only finitely many of the $a_L$'s are non-zero. We define the \defi{Grothendieck group $\G(G)$ of admissible representations} to be $\bb{Z}^{\Lambda}$, the direct product of copies of $\bb{Z}$, indexed by the set $\Lambda$. We call the elements of $\G(G)$ \defi{virtual representations}. We write a typical element $U\in\G(G)$ as
\[U=\sum_{L\in\Lambda} a_{L}\cdot L,\]
where $a_L\in\bb{Z}$ and define $\scpr{U}{L}=a_L$ to be the \defi{multiplicity} of $L$ inside $U$. A sequence $(U_r)_r$ of virtual representations is said to be \defi{convergent} (in $\G(G)$) if for every $L\in\Lambda$, the sequence of integers $\scpr{U_r}{L}$ is eventually constant. If $(U_r)_r$ is convergent, we write $a_L=\lim_{r\to\infty}\scpr{U_r}{L}$ for each $L\in\Lambda$. We define $U=\sum_{L\in\Lambda}a_L\cdot L$ to be \defi{the limit} of $(U_r)_r$, and write
\[\lim_{r\to\infty}U_r=U.\]

\subsubsection{Combinatorics of weights}\label{subsubsec:combinatoricsweights}
It will be convenient to make sense of $S_{\ll}W$ even when $\ll\in\bb{Z}^n$ is not dominant. In order to do so we let $\d=(n-1,n-2,\cdots,1,0)$ and consider $\ll+\d=(\ll_1+n-1,\ll_2+n-2,\cdots,\ll_{n-1}+1,\ll_n)$. We write $\rm{sort}(\ll+\d)$ for the sequence obtained by rearranging the entries of $\ll+\d$ in non-increasing order. If $\ll+\d$ has non-repeated entries, we let $\sgn(\ll)$ denote the sign of the unique permutation realizing the sorting of the sequence $\ll+\d$. We define
\[\tl{\ll}=\rm{sort}(\ll+\d)-\d,\]
and let $S_{\ll}W$ be the element of $\G(W)$ defined by
\begin{equation}\label{eq:defSll}
S_{\ll}W = \begin{cases}
\sgn(\ll)\cdot S_{\tl{\ll}}W & \rm{if }\tl{\ll}\rm{ is dominant (i.e. if }\ll+\d\rm{ has non-repeated entries);} \\
0 & \rm{otherwise}.            
\end{cases}
\end{equation}
For example, we have $S_{(2,1,4,3)}W=0$ and $S_{(1,1,0,7)}W=-S_{(4,2,2,1)}W$. Note that in particular
\begin{equation}\label{eq:vanishSll}
S_{\ll}W=0\rm{ if }\ll_{i+1}=\ll_i+1\rm{ for some }i=1,\cdots,n-1. 
\end{equation}

We denote by ${[n]\choose k}$ the collection of subsets $I\subset[n]$ of size $|I|=k$, and write $P(k,n-k)$ for the set of partitions $\mu=(\mu_1\geq\cdots\geq\mu_k)$ with $\mu_1\leq n-k$. There is a one-to-one correspondence between sets $I\in{[n]\choose k}$ and partitions $\mu\in P(k,n-k)$ given by
\begin{equation}\label{eq:Itomu}
I = \{\mu_k+1,\mu_{k-1}+2,\cdots,\mu_2+(k-1),\mu_1+k\}. 
\end{equation}
If we write $\mu'\in P(n-k,k)$ for the conjugate partition of $\mu$ then the complement of $I$ in $[n]$ is given by
\begin{equation}\label{eq:Icfrommu'}
I^c=[n]\setminus I=\{k+1-\mu'_1,k+2-\mu'_2,\cdots,n-\mu'_{n-k}\}. 
\end{equation}
For every $\ll\in\bb{Z}^n$, $I\in{[n]\choose k}$ and $r\in\bb{Z}$, we define $\ll(r,I)\in\bb{Z}^n$ as follows: we write the elements of $I$ and $I^c$ in increasing order
\begin{equation}\label{eq:listIIc}
I=\{i_1<\cdots<i_k\},\ I^c=\{i_{k+1}<\cdots<i_{n}\}, 
\end{equation}
and let
\begin{equation}\label{eq:defllrI}
 \ll(r,I)_t=\begin{cases}
 r+t+\ll_{i_t}-i_t & \rm{for }t=1,\cdots,k; \\
 t+\ll_{i_t}-i_t & \rm{for }t=k+1,\cdots,n.
\end{cases}
\end{equation}
 We define $\ll^1(I)\in\bb{Z}^k$ and $\ll^2(I)\in\bb{Z}^{n-k}$ via
\begin{equation}\label{eq:defll12}
\begin{aligned}
 \ll^1(I)_t &= t+\ll_{i_t}-i_t,\rm{ for }t=1,\cdots,k, \\
 \ll^2(I)_{t-k} &= t+\ll_{i_t}-i_t,\rm{ for }t=k+1,\cdots,n,
\end{aligned}
\end{equation}
so that $\ll(r,I)$ is the concatenation of $\ll^1(I)+(r^k)$ and $\ll^2(I)$. In particular
\[\ll^1([k])=(\ll_1,\cdots,\ll_k)\rm{ and }\ll^2([k])=(\ll_{k+1},\cdots,\ll_n).\]
We define the permutation $\s(I)$ of $[n]$ via
\begin{equation}\label{eq:defsI}
 \s(I)_t=i_t\rm{ for }t=1,\cdots,n. 
\end{equation}
With this notation we obtain
\begin{equation}\label{eq:Sll+rI=Sll(rI)}
 S_{\ll+(r^I)}W=\sgn(\s(I))\cdot S_{\ll(r,I)}W=(-1)^{|\mu|}\cdot S_{\ll(r,I)}W,
\end{equation}
and note that if $\ll$ is dominant and $r$ is sufficiently large then $\ll(r,I)$ is also dominant.

We define for $h,j\in\bb{Z}/2\bb{Z}$ the sets of partitions
\begin{equation}\label{eq:defPhjab}
 P^{h,j}(a,b)=\{\mu\in P(a,b):\mu_i\equiv h\ (\opmod\ 2)\rm{ for }i=1,\cdots,a,\mu'_i\equiv j\ (\opmod\ 2)\rm{ for }i=1,\cdots,b\}.
\end{equation}
A quick counting argument yields the following:

\begin{lemma}\label{lem:countpartitions}
 The cardinality of $P^{h,j}(a,b)$ is computed by:
\[
|P^{0,0}(a,b)|={\lfloor \frac{a}{2}\rfloor + \lfloor \frac{b}{2}\rfloor \choose \lfloor \frac{b}{2}\rfloor},\quad
|P^{0,1}(a,b)|=\begin{cases}
\displaystyle{\lfloor \frac{a-1}{2}\rfloor + \frac{b}{2} \choose \frac{b}{2}} & b\rm{ even} \\
0 & b\rm{ odd}
\end{cases},\]
\[
|P^{1,0}(a,b)|=\begin{cases}
\displaystyle{\frac{a}{2} + \lfloor \frac{b-1}{2}\rfloor \choose \lfloor \frac{b-1}{2}\rfloor} & a\rm{ even} \\
0 & a\rm{ odd}
\end{cases},\quad
|P^{1,1}(a,b)|=\begin{cases}
\displaystyle{\frac{a-1}{2} + \frac{b-1}{2} \choose \frac{b-1}{2}} & a,b\rm{ odd} \\
0 & \rm{otherwise}
\end{cases}.
\]
\end{lemma}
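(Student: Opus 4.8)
The plan is to encode a partition $\mu\in P(a,b)$ by its multiplicity vector $(m_1,\dots,m_b)$, where $m_v=\#\{i:\mu_i=v\}$; thus $\mu$ has exactly $\sum_{v=1}^{b}m_v\le a$ nonzero parts, and $\mu'_v=\sum_{w\ge v}m_w$ for $1\le v\le b$. The first step is to rewrite the two parity conditions defining $P^{h,j}(a,b)$ in \eqref{eq:defPhjab} as conditions on the $m_v$. For the row condition: if $h=0$ then ``$\mu_i$ even for all $i\le a$'' simply says $m_v=0$ for every odd $v$, while if $h=1$ then ``$\mu_i$ odd for all $i\le a$'' forces $\mu$ to have no zero part, i.e.\ $\sum_v m_v=a$, together with $m_v=0$ for every even $v$. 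For the column condition, note that $\mu'_v-\mu'_{v+1}=m_v$ for $v<b$ and $\mu'_b=m_b$; telescoping downward from $v=b$ shows that ``$\mu'_v\equiv j\pmod 2$ for all $v\le b$'' is equivalent to: all $m_v$ even when $j=0$; and $m_b$ odd while $m_v$ is even for $v<b$ when $j=1$ (the hypothesis $m_b$ odd automatically forces $\mu_1=b$, so that no $\mu'_v$ with $v\le b$ vanishes, and conversely all such $\mu'_v$ come out odd).

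The second step is to intersect these conditions in each of the four cases and recognize the count as a standard one, via $\#\{x\in\bb{Z}_{\ge 0}^{n}:\sum_i x_i=m\}=\binom{m+n-1}{n-1}$ and $\#\{x\in\bb{Z}_{\ge 0}^{n}:\sum_i x_i\le m\}=\binom{m+n}{n}$. In the $P^{0,0}(a,b)$ case the free variables are the $m_v$ at the $\lfloor b/2\rfloor$ even indices $v\le b$, each a nonnegative even integer with $\sum_v m_v\le a$; putting $m_{2k}=2p_k$ turns this into counting $p\in\bb{Z}_{\ge 0}^{\lfloor b/2\rfloor}$ with $\sum_k p_k\le\lfloor a/2\rfloor$, giving $\binom{\lfloor a/2\rfloor+\lfloor b/2\rfloor}{\lfloor b/2\rfloor}$. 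In the $P^{0,1}(a,b)$ case, ``$m_b$ odd'' is incompatible with $m_b=0$, so the set is empty unless $b$ is even; when $b$ is even, putting $m_{2k}=2p_k$ for $k\le b/2-1$ and $m_b=2q+1$ reduces the count to $\#\{(p_1,\dots,p_{b/2-1},q)\in\bb{Z}_{\ge 0}^{b/2}:\sum_k p_k+q\le\lfloor(a-1)/2\rfloor\}=\binom{\lfloor(a-1)/2\rfloor+b/2}{b/2}$. The cases $P^{1,0}$ and $P^{1,1}$ run identically, the one change being that the row condition now imposes the equality $\sum_v m_v=a$ rather than an inequality, so one counts solutions of a linear equation; this forces $a$, respectively both $a$ and $b$, into the stated parity classes (the set being empty otherwise), and the resulting binomial coefficients are matched with those in the statement using $\lceil b/2\rceil-1=\lfloor(b-1)/2\rfloor$.

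I do not expect a serious obstacle — this is the ``quick counting argument'' advertised before the lemma — but the point requiring care is the bookkeeping at the two boundaries of the $a\times b$ box. The endpoint $v=b$ of the column condition is responsible both for the parity restriction on $b$ and for the one-unit shift in the binomial coefficient when $j=1$, while the zero parts among $\mu_1,\dots,\mu_a$ in the row condition are what turn the bound $\sum_v m_v\le a$ into the equation $\sum_v m_v=a$ (hence the parity restriction on $a$) when $h=1$. Keeping the floors and ceilings produced by the substitutions $m=2p$ and $m=2p+1$ aligned with the closed forms in all four cases is the only genuinely fiddly part.
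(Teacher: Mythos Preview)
Your argument is correct and complete; the paper itself does not give a proof, merely asserting that ``a quick counting argument'' yields the lemma, so your multiplicity-vector encoding is exactly the kind of argument the paper is gesturing at. The bookkeeping you flag as fiddly --- the boundary behavior at $v=b$ and the passage from $\sum m_v\le a$ to $\sum m_v=a$ when $h=1$ --- is handled correctly in your outline, including the identity $\lceil b/2\rceil-1=\lfloor(b-1)/2\rfloor$ needed to match the stated form in the $P^{1,0}$ case.
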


\subsubsection{A generalized Pieri rule}\label{subsubsec:genlPieri}
The Grothendieck group $\G(W)$ is a module over the \defi{representation ring $R(W)$} of finite dimensional $\GL(W)$-representations. As a ring, $R(W)$ is generated by the exterior powers $\bw^k W$, $k\leq n$, and by the inverse $\det(W)^{-1}=\bw^n W^*=S_{(-1^n)}W$ of $\det(W)$. We have $S_{\ll}W\oo\det(W)=S_{\ll+(1^n)}W$. The following lemma generalizes this by describing the multiplicative action of the exterior powers $\bw^k W$ on $\G(W)$ (since the multiplication is continuous, i.e. it commutes with limits, it suffices to determine its action on the indecomposables $S_{\ll}W$):

\begin{lemma}[Pieri's rule]\label{lem:Pieri}
For every $\ll\in\bb{Z}^n$ we have the following equality in $\G(W)$:
\begin{equation}\label{eq:Pieri}
\left(\bw^k W\right)\oo S_{\ll}W=\sum_{I\in{[n]\choose k}}S_{\ll+(1^I)}W.
\end{equation}
\end{lemma}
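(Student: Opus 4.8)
The plan is to reduce \eqref{eq:Pieri} to an identity of characters and then prove that identity directly. Both sides of \eqref{eq:Pieri} are finite virtual $\GL(W)$-representations: the left-hand side is an honest finite-dimensional representation, and by \eqref{eq:defSll} the right-hand side is a finite $\bb{Z}$-linear combination of irreducibles $S_\mu W$. Since the characters of the $S_\mu W$, $\mu\in\bb{Z}^n_{\dom}$, are $\bb{Z}$-linearly independent (they are the Schur Laurent polynomials), the character map is injective on finite virtual representations, so it suffices to verify \eqref{eq:Pieri} after passing to characters.

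The second step is to record the character of $S_\ll W$ for an arbitrary $\ll\in\bb{Z}^n$. Set $a_\alpha(x)=\det\bigl(x_i^{\alpha_j}\bigr)_{1\le i,j\le n}$ for $\alpha\in\bb{Z}^n$; this Laurent polynomial vanishes precisely when $\alpha$ has a repeated entry, and $a_{\d}(x)=\prod_{i<j}(x_i-x_j)$. The Weyl character formula gives $\operatorname{ch}(S_\mu W)=a_{\mu+\d}(x)/a_{\d}(x)$ for dominant $\mu$. Reordering the columns of $\bigl(x_i^{(\ll+\d)_j}\bigr)$ to make the exponents decreasing multiplies the determinant by $\sgn(\ll)$, so $a_{\ll+\d}(x)=\sgn(\ll)\cdot a_{\tl{\ll}+\d}(x)$ when $\ll+\d$ has distinct entries and $a_{\ll+\d}(x)=0$ otherwise; comparing with \eqref{eq:defSll} yields the uniform formula $\operatorname{ch}(S_\ll W)=a_{\ll+\d}(x)/a_{\d}(x)$ for every $\ll\in\bb{Z}^n$. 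Using $\operatorname{ch}\bigl(\bw^k W\bigr)=e_k(x)$ (the $k$-th elementary symmetric polynomial) and $\bigl(\ll+(1^I)\bigr)+\d=(\ll+\d)+(1^I)$, multiplying \eqref{eq:Pieri} by $a_{\d}(x)$ and setting $\alpha=\ll+\d$ reduces the lemma to the Laurent-polynomial identity
\begin{equation*}
e_k(x)\cdot a_\alpha(x)=\sum_{I\in{[n]\choose k}}a_{\alpha+(1^I)}(x),\qquad\alpha\in\bb{Z}^n.
\end{equation*}

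Finally I would prove this by expanding each alternant as a signed sum over permutations $\s$ of $[n]$: since $\prod_{j}x_{\s(j)}^{\alpha_j+(1^I)_j}=\bigl(\prod_j x_{\s(j)}^{\alpha_j}\bigr)\prod_{j\in I}x_{\s(j)}$ and $\sum_{I\in{[n]\choose k}}\prod_{j\in I}x_{\s(j)}=e_k(x_{\s(1)},\cdots,x_{\s(n)})=e_k(x)$ by symmetry, summing over $I$ gives $\sum_I a_{\alpha+(1^I)}(x)=e_k(x)\sum_{\s}\sgn(\s)\prod_j x_{\s(j)}^{\alpha_j}=e_k(x)\cdot a_\alpha(x)$. (Equivalently, this is the coefficient of $t^k$ in $\det\bigl((I+tD)A\bigr)=\prod_i(1+tx_i)\cdot\det A$ with $D=\operatorname{diag}(x_1,\cdots,x_n)$ and $A=\bigl(x_i^{\alpha_j}\bigr)$.) This last step is a computation rather than an obstacle; the only point needing genuine care is the bookkeeping in the second step — that the sign convention of \eqref{eq:defSll} is exactly the one making $\ll\mapsto a_{\ll+\d}(x)/a_{\d}(x)$ the character of $S_\ll W$ — after which the generalized rule holds for all $\ll\in\bb{Z}^n$ uniformly, with non-dominant terms on the right automatically vanishing or recombining through the signs. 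An alternative for dominant $\ll$ is the classical Pieri rule for partitions after twisting by a large power of $\det(W)$, extended to non-dominant $\ll$ via \eqref{eq:defSll}, but the character argument is more uniform.
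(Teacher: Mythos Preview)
Your proof is correct, and it takes a genuinely different route from the paper's. The paper argues combinatorially: it first reduces to dominant $\ll$, then observes that for a subset $I$ with $i\notin I$, $(i+1)\in I$, and $\ll_i=\ll_{i+1}$, the term $S_{\ll+(1^I)}W$ vanishes by (\ref{eq:vanishSll}); the surviving terms $\mu=\ll+(1^I)$ are exactly the dominant weights with $\mu/\ll$ a vertical strip of size $k$, and the identity is then the classical Pieri rule \cite[Corollary~2.3.5]{weyman}. Your argument instead passes to characters and proves the alternant identity $e_k(x)\cdot a_{\alpha}(x)=\sum_I a_{\alpha+(1^I)}(x)$ directly by expanding the determinants, using only that $e_k$ is symmetric.

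The trade-offs: the paper's proof is shorter if one grants the classical Pieri rule as known, and it makes transparent which terms on the right-hand side survive (the vertical strips) and which cancel or vanish. Your character computation is self-contained and handles all $\ll\in\bb{Z}^n$ uniformly, with no separate reduction to the dominant case and no appeal to an external Pieri rule; the ``bookkeeping'' you flag (that the sign in (\ref{eq:defSll}) matches the column-permutation sign of the alternant) is exactly what makes this uniformity work, and you have it right. The alternative you sketch at the end---twist to a partition, invoke classical Pieri, extend via (\ref{eq:defSll})---is essentially the paper's approach.
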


\begin{proof}
 We may assume without loss of generality that $\ll$ is dominant. If $\ll_{i+1}=\ll_i$ and $I$ is such that $i\notin I$ and $(i+1)\in I$ then it follows from (\ref{eq:vanishSll}) that $S_{\ll+(1^I)}W=0$. For all the other terms appearing on the right hand side of (\ref{eq:Pieri}) we have that $\mu=\ll+(1^I)$ is dominant and $\mu/\ll$ is a \defi{vertical strip} (i.e. $\mu_i-\ll_i\in\{0,1\}$ for all $i$) of size $k$. (\ref{eq:Pieri}) then follows from the usual Pieri formula \cite[Corollary~2.3.5]{weyman}.
\end{proof}

We define elements $p_{k,r}(W)\in R(W)$ for every $r\in\bb{Z}$ and $0\leq k\leq n$, by
\begin{equation}\label{eq:defpkr}
 p_{k,r}(W)=\sum_{I\in{[n]\choose k}} S_{(r^I)}W,
\end{equation}
and note that $p_{k,1}(W)=\bw^k W$. We have the following generalization of Pieri's rule:

\begin{lemma}\label{lem:genlPieri}
For every $\ll\in\bb{Z}^n$ we have the following equality in $\G(W)$:
\begin{equation}\label{eq:genlPieri}
p_{k,r}(W)\oo S_{\ll}W=\sum_{I\in{[n]\choose k}}S_{\ll+(r^I)}W.
\end{equation}
\end{lemma}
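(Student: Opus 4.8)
The plan is to reduce \eqref{eq:genlPieri} to an elementary manipulation of Weyl characters (alternants). The organizing observation is that
\[
\sum_{I\in{[n]\choose k}} x^{(r^I)}\;=\;e_k\!\left(x_1^r,\dots,x_n^r\right),
\qquad x^{(r^I)}:=\prod_{i\in I}x_i^{r},
\]
so this sum is \emph{symmetric} in $x_1,\dots,x_n$, and can therefore be pulled out of any alternant. Since $p_{k,r}(W)$ is a finite sum of the $S_{(r^J)}W$ and $S_\ll W$ itself lies in $R(W)$, both sides of \eqref{eq:genlPieri} lie in $R(W)$, and it suffices to check the equality after applying the injective character homomorphism $R(W)\hookrightarrow\bb{Z}[x_1^{\pm1},\dots,x_n^{\pm1}]$. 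I would first record that, with the sign convention of \eqref{eq:defSll}, the character of $S_\mu W$ for \emph{any} $\mu\in\bb{Z}^n$ is $a_{\mu+\d}/a_\d$, where $a_\gamma=\sum_{\s\in S_n}\sgn(\s)\,x^{\s(\gamma)}$; this vanishes exactly when $\gamma$ has a repeated entry and satisfies $a_{\s\gamma}=\sgn(\s)\,a_\gamma$, which is precisely what makes it compatible with \eqref{eq:defSll} even when $\mu$ is not dominant.

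The main computation is then, for arbitrary $\ll\in\bb{Z}^n$,
\[
\sum_{I\in{[n]\choose k}} a_{\ll+\d+(r^I)}
=\sum_{\s\in S_n}\sgn(\s)\,x^{\s(\ll+\d)}\sum_{I\in{[n]\choose k}}\prod_{i\in I}x_{\s(i)}^{r}
=e_k\!\left(x_1^r,\dots,x_n^r\right)\cdot a_{\ll+\d},
\]
where the last step uses that $I\mapsto\s(I)$ permutes ${[n]\choose k}$ and that $e_k(x_1^r,\dots,x_n^r)$ is symmetric. Dividing by $a_\d$ gives $\sum_I \operatorname{ch}\!\big(S_{\ll+(r^I)}W\big)=e_k(x_1^r,\dots,x_n^r)\cdot\operatorname{ch}(S_\ll W)$. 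Taking $\ll=0$ identifies $\operatorname{ch}\!\big(p_{k,r}(W)\big)=e_k(x_1^r,\dots,x_n^r)$ in $R(W)$; feeding this back in yields $\operatorname{ch}\!\big(p_{k,r}(W)\oo S_\ll W\big)=e_k(x_1^r,\dots,x_n^r)\cdot\operatorname{ch}(S_\ll W)=\operatorname{ch}\!\big(\sum_I S_{\ll+(r^I)}W\big)$, and injectivity of the character map finishes the proof.

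I do not expect a genuine obstacle here; the only point requiring care is the bookkeeping of the sign convention \eqref{eq:defSll} when $\ll+(r^I)$ is not dominant, and this is exactly what disappears once one works with the alternants $a_\gamma$ instead of with their sorted, signed dominant representatives. One could alternatively deduce the general $\ll$ case formally from the case $\ll=0$ (i.e. from $p_{k,r}(W)=e_k(x_1^r,\dots,x_n^r)$), but treating arbitrary $\ll$ directly as above is cleanest and produces the $\ll=0$ identity for free. As a sanity check, $k=1$ recovers the power-sum formula of \cite[Exercise~I.3.11(1)]{macdonald}, and $r=1$ recovers Lemma~\ref{lem:Pieri}.
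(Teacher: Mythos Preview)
Your proof is correct and takes a genuinely different route from the paper's. The paper argues by induction on partitions: it reduces to dominant $\ll$, then to partitions with $\ll_n=0$, orders them by graded reverse-lex on the conjugate, removes the last column of the Young diagram, and combines the inductive hypothesis with the ordinary Pieri rule (Lemma~\ref{lem:Pieri}); the bulk of the work is a cancellation argument showing that the spurious terms coming from non-dominant $\mu+(1^J)$ contribute zero to $\sum_I S_{\a+(r^I)}W$. Your approach instead passes to characters and exploits the single identity $e_k(x_1^r,\dots,x_n^r)\cdot a_{\ll+\d}=\sum_I a_{\ll+\d+(r^I)}$, which is immediate from the symmetry of $e_k(x_1^r,\dots,x_n^r)$ and the antisymmetry of alternants; the sign convention~(\ref{eq:defSll}) is automatically absorbed because $a_{\gamma}$ already satisfies $a_{\s\gamma}=\sgn(\s)a_\gamma$ and vanishes when $\gamma$ has a repeated entry. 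Your argument is shorter and more conceptual, and handles non-dominant $\ll$ uniformly without a separate reduction; the paper's argument has the virtue of staying entirely within the combinatorics of Schur functors, never invoking the Weyl character formula, which keeps it self-contained relative to the tools set up in Section~\ref{subsec:repthy}.
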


\begin{proof} When $k=0$, $p_{0,r}(W)=\bb{C}$ is the identity element of $R(W)$, so the conclusion is trivial. We may thus assume that $k>0$. As before, we also assume that $\ll$ is dominant. Multiplication by $\det(W)$ is an invertible operation, so proving (\ref{eq:genlPieri}) for $\ll$ is equivalent to proving it for $\ll+(1^{n})$. In particular, we may assume that $\ll$ is a partition and that moreover $\ll_n = 0$.

We consider the ordering of the partitions $\ll$ with at most $n$ parts induced by the graded reverse lexicographic order on their conjugates: more precisely, we say that $\ll\succ\mu$ if $|\ll|>|\mu|$, or if $|\ll|=|\mu|$ and for the largest index $i$ for which $\ll'_i\neq\mu'_i$ one has $\ll'_i>\mu'_i$. We prove (\ref{eq:genlPieri}) for all partitions $\ll$, by induction with respect to the said ordering. When $\ll$ is the empty partition, (\ref{eq:genlPieri}) coincides with (\ref{eq:defpkr}). 

Assume now that $\ll_1>0$ and consider the parititon $\mu$ obtained from $\ll$ by removing the last column of its Young diagram: the conjugate $\mu'$ is given by $\mu'_i=\ll'_i$ for $i<\ll_1$ and $\mu'_i=0$ for $i\geq\ll_1$. We let $l=\ll'_{\ll_1}$ denote the size of the column removed from $\ll$. Using the induction hypothesis for $\mu\prec\ll$ and Lemma~\ref{lem:Pieri} we get
\begin{equation}\label{eq:pkrSmubw^l}
(p_{k,r}(W)\oo S_{\mu}W)\oo\left(\bw^l W\right)=\sum_{J\in{[n]\choose l}}\left(\sum_{I\in{[n]\choose k}} S_{\mu+(r^I)+(1^J)}W\right). 
\end{equation}
Consider the collection of partitions $\mc{P}=\{\a:\a/\mu\rm{ is a vertical strip of size }l\}$, so that
\[S_{\mu}W\oo\left(\bw^l W\right)=\sum_{\a\in\mc{P}} S_{\a}W,\]
and note that $\ll\in\mc{P}$ and that $\a\prec\ll$ for every $\ll\neq\a\in\mc{P}$. We can then rewrite the left hand side of (\ref{eq:pkrSmubw^l}) as
\[\sum_{\a\in\mc{P}}p_{k,r}(W)\oo S_{\a}W,\]
so in order to prove (\ref{eq:genlPieri}) for $\ll$ it is sufficient to show that the right hand side of (\ref{eq:pkrSmubw^l}) is equal to
\[\sum_{\a\in\mc{P}}\left(\sum_{I\in{[n]\choose k}} S_{\a+(r^I)}W\right).\]
Since $\mc{P}=\{\mu+(1^J):J\in{[n]\choose l},\mu+(1^J)\rm{ dominant}\}$, we only have to check that when $\a=\mu+(1^J)$ is not dominant then
\begin{equation}\label{eq:sumSar^I=0}
\sum_{I\in{[n]\choose k}} S_{\a+(r^I)}W=0. 
\end{equation}
Note that the only way in which $\a=\mu+(1^J)$ can fail to be dominant is if for some index $j$, $\mu_j=\mu_{j+1}$ and $j\notin J$, $(j+1)\in J$. Fix such an index $j$, and note that $\a_{j+1}=\a_j+1$. It follows from (\ref{eq:vanishSll}) that when $I\subset[n]$ is such that both $j,j+1\in I$, or both $j,j+1\notin I$, then $S_{\a+(r^I)}W=0$. To show (\ref{eq:sumSar^I=0}) it is then enough to prove that
\begin{equation}\label{eq:sumjxorj+1=0}
\sum_{\substack{I\in{[n]\choose k} \\ j\in I,(j+1)\notin I}} S_{\a+(r^I)}W + \sum_{\substack{I'\in{[n]\choose k} \\ j\notin I',(j+1)\in I'}} S_{\a+(r^{I'})}W = 0.
\end{equation}
There is a one-to-one correspondence between the collection of subsets $I$ with $j\in I$, $(j+1)\notin I$, and subsets $I'$ with $j\notin I'$, $(j+1)\in I'$, given by $I'=(I\cup\{j+1\})\setminus\{j\}$. Moreover, for such a pair $I,I'$ it follows from (\ref{eq:defSll}) that $S_{\a+(r^I)}W=-S_{\a+(r^{I'})}W$ (because $\a+(r^I)+\d$ is obtained from $\a+(r^{I'})+\d$ by switching the $j$-th part with the $(j+1)$-st part), which proves (\ref{eq:sumjxorj+1=0}) and concludes the proof of the lemma.
\end{proof}

\subsection{Bott's theorem for Grassmannians {\cite[Ch.~4]{weyman}}}\label{subsec:bott}

We consider $X=\bb{G}(k,V)$, the Grassmannian of $k$-dimensional quotients of $V$ (or $k$-dimensional subspaces of $W=V^*$), with the tautological sequence
\begin{equation}\label{eq:tautGr}
0\lra\mc{R}\lra V\oo\mc{O}_X\lra\mc{Q}\lra 0, 
\end{equation}
where $\mc{Q}$ is the tautological rank $k$ quotient bundle, and $\mc{R}$ is the tautological rank $(n-k)$ sub-bundle. Bott's Theorem for Grassmannians \cite[Corollary~4.1.9]{weyman} computes the cohomology of a large class of $\GL$-equivariant bundles on $X$. We only need a weaker version that computes Euler characteristics.

Suppose that $\mc{M}$ is a quasi-coherent $\GL(W)$-equivariant sheaf on $X$. We say that $\mc{M}$ has \defi{admissible (resp. finite) cohomology} if its cohomology groups $H^j(X,\mc{M})$ are admissible (resp. finite) for $j=0,\cdots,\dim(X)$. We can therefore make sense of the Euler characteristic of $\mc{M}$ as an element of $\G(W)$ (resp. $R(W)$). We define the \defi{Euler characteristic} of $\mc{M}$ to be the virtual representation
\begin{equation}\label{eq:defEulerChar}
 \chi(X,\mc{M}) = \sum_{j=0}^{k\cdot(n-k)}(-1)^j H^j(X,\mc{M}).
\end{equation}

\begin{theorem}[Bott]\label{thm:bott}
 Let $\a\in\bb{Z}^k_{\dom}$ and $\b\in\bb{Z}^{n-k}_{\dom}$ be dominant weights, and let $\ll=(\a,\b)\in\bb{Z}^n$ be their concatenation. The Euler characteristic of $S_{\a}\mc{Q}\oo S_{\b}\mc{R}$ is given (with the convention (\ref{eq:defSll})) by
\[\chi(X,S_{\a}\mc{Q}\oo S_{\b}\mc{R})=S_{\ll}V.\]
\end{theorem}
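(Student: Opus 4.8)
The plan is to derive this Euler-characteristic formula from the full strength of Bott's vanishing theorem for Grassmannians, \cite[Corollary~4.1.9]{weyman}, the only additional input being an unwinding of the alternating sum (\ref{eq:defEulerChar}) together with the sign convention (\ref{eq:defSll}). In the present notation, the full theorem says the following about $S_{\a}\mc{Q}\oo S_{\b}\mc{R}$ on $X=\bb{G}(k,V)$, with $\ll=(\a,\b)$: if $\ll+\d$ has a repeated entry then all the cohomology groups $H^j(X,S_{\a}\mc{Q}\oo S_{\b}\mc{R})$ vanish; otherwise, letting $w$ be the unique permutation of $[n]$ that sorts $\ll+\d$ into strictly decreasing order and $\tl{\ll}=\rm{sort}(\ll+\d)-\d$ the resulting dominant weight, the only non-vanishing cohomology is $H^{\ell(w)}(X,S_{\a}\mc{Q}\oo S_{\b}\mc{R})=S_{\tl{\ll}}V$, where $\ell(w)$ is the number of inversions of $w$ (which lies between $0$ and $k\cdot(n-k)$ because, $\a$ and $\b$ being dominant, $\ll+\d$ is a concatenation of two strictly decreasing blocks).

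Granting this, I would compute $\chi$ by pure bookkeeping. If $\ll+\d$ has a repeat, then $\chi(X,S_{\a}\mc{Q}\oo S_{\b}\mc{R})=0=S_{\ll}V$, the last equality being the definition (\ref{eq:defSll}) in this case. Otherwise the sum (\ref{eq:defEulerChar}) has a single nonzero term and $\chi(X,S_{\a}\mc{Q}\oo S_{\b}\mc{R})=(-1)^{\ell(w)}\cdot S_{\tl{\ll}}V$. Since $(-1)^{\ell(w)}=\sgn(w)$ is the signature of $w$, and $w$ is by construction the permutation realizing the sorting of $\ll+\d$, we have $(-1)^{\ell(w)}=\sgn(\ll)$ in the sense of Section~\ref{subsubsec:combinatoricsweights}; comparing with (\ref{eq:defSll}) yields $\chi(X,S_{\a}\mc{Q}\oo S_{\b}\mc{R})=\sgn(\ll)\cdot S_{\tl{\ll}}V=S_{\ll}V$.

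The only point that needs genuine care — and the step I would flag as the main obstacle — is matching conventions with \cite[Corollary~4.1.9]{weyman}: that $\bb{G}(k,V)$ is the Grassmannian of $k$-dimensional \emph{quotients} with $\mc{Q},\mc{R}$ as in (\ref{eq:tautGr}), that $S_{\a}\mc{Q}\oo S_{\b}\mc{R}$ is indexed by the concatenation $(\a,\b)$ with the $\mc{Q}$-part first, and — most delicately — that the nonzero cohomology sits in degree $\ell(w)$ rather than in the complementary degree $k\cdot(n-k)-\ell(w)$; the latter ambiguity is precisely where a spurious global factor $(-1)^{k(n-k)}$ would enter. A single computation that pins down all of these normalizations at once is the case $k=1$, where $X=\bb{P}(V)$, $\mc{Q}=\mc{O}_X(1)$, and the formula must recover $H^{\bullet}(\bb{P}(V),\mc{O}_X(r))$ for all $r\in\bb{Z}$; one checks that the three regimes $r\geq 0$, $-n<r<0$, and $r\leq -n$ match $S_{(r,0^{n-1})}V$ read off from (\ref{eq:defSll}). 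If one wished to avoid \cite[Corollary~4.1.9]{weyman} entirely, one could instead pull $S_{\a}\mc{Q}\oo S_{\b}\mc{R}$ back along the (cohomologically trivial) projection to $X$ from the variety of complete flags in $V$, filter the pullback by line bundles, and thereby reduce to computing Euler characteristics of line bundles on a full flag variety, which is the Weyl character formula; this route, however, merely relocates the combinatorics and is not visibly shorter.
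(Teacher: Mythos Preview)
Your proposal is correct and matches the paper's approach: the paper does not prove Theorem~\ref{thm:bott} at all but simply cites \cite[Corollary~4.1.9]{weyman} as the full Bott theorem and states this Euler-characteristic version as the ``weaker version'' needed. Your write-up is exactly the unwinding one would supply if asked to fill in that citation, and your care about the convention-matching (quotient Grassmannian, $(\a,\b)$ ordering, degree $\ell(w)$ versus $k(n-k)-\ell(w)$) is appropriate and not something the paper addresses explicitly.
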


We can now give an alternative interpretation of the elements $p_{k,r}$ introduced in (\ref{eq:defpkr}):

\begin{lemma}\label{lem:pkrispfwd}
 If we let $\Omega_X^i=\bw^i(\mc{R}\oo\mc{Q}^*)$ denote the sheaf of $i$-differential forms on $X$, and write $\mc{O}_X(1)=\det(\mc{Q})$ for the Pl\"ucker line bundle on $X$, then
\[p_{k,r}(V)=\sum_{i=0}^{k\cdot(n-k)}(-1)^i\cdot\chi(X,\Omega_X^i(r)).\]
\end{lemma}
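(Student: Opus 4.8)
The plan is to compute the Euler characteristic $\sum_i (-1)^i \chi(X,\Omega_X^i(r))$ by first expressing $\Omega_X^i$ in terms of Schur functors of the tautological bundles, then applying Bott's theorem (Theorem~\ref{thm:bott}) term by term, and finally recognizing the result as $p_{k,r}(V)$ via the definition (\ref{eq:defpkr}). The starting point is the identity $\Omega_X^i = \bw^i(\mc{R}\oo\mc{Q}^*)$. Since $\mc{R}$ has rank $n-k$ and $\mc{Q}^*$ has rank $k$, the Cauchy formula for exterior powers of a tensor product gives
\[
\bw^i(\mc{R}\oo\mc{Q}^*)=\bigoplus_{\mu} S_{\mu}\mc{R}\oo S_{\mu'}\mc{Q}^*,
\]
where $\mu$ runs over partitions of $i$ fitting in an $(n-k)\times k$ box, i.e. $\mu\in P(n-k,k)$ with $|\mu|=i$, and $\mu'$ is the conjugate. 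Twisting by $\mc{O}_X(r)=\det(\mc{Q})^{\oo r}$ replaces $S_{\mu'}\mc{Q}^*$ by $S_{\mu'+(r^k)}\mc{Q}$ (here $\mu'$ has $k$ parts). Summing over all $i$ with signs $(-1)^i=(-1)^{|\mu|}$, I get
\[
\sum_{i=0}^{k(n-k)}(-1)^i\,\Omega_X^i(r)=\sum_{\mu\in P(n-k,k)}(-1)^{|\mu|}\,S_{\mu'+(r^k)}\mc{Q}\oo S_{\mu}\mc{R}
\]
as a virtual equivariant bundle.

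Next I would apply Bott's theorem to each summand. For each $\mu\in P(n-k,k)$, the weight $\mu'+(r^k)\in\bb{Z}^k$ is dominant (it is a partition with $r$ added to each part) and $\mu\in\bb{Z}^{n-k}$ is dominant, so Theorem~\ref{thm:bott} applies directly and yields $\chi(X,S_{\mu'+(r^k)}\mc{Q}\oo S_{\mu}\mc{R})=S_{\ll(\mu)}V$, where $\ll(\mu)=(\mu'+(r^k),\mu)$ is the concatenation of $\mu'+(r^k)$ (a weight in $\bb{Z}^k$) and $\mu$ (a weight in $\bb{Z}^{n-k}$), with the convention (\ref{eq:defSll}) in force. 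Therefore
\[
\sum_{i}(-1)^i\chi(X,\Omega_X^i(r))=\sum_{\mu\in P(n-k,k)}(-1)^{|\mu|}\,S_{(\mu'+(r^k),\,\mu)}V.
\]

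**The remaining step** — which I expect to be the main combinatorial obstacle — is to identify this sum with $p_{k,r}(V)=\sum_{I\in\binom{[n]}{k}}S_{(r^I)}V$. The bridge is exactly the dictionary set up in Section~\ref{subsubsec:combinatoricsweights}: by (\ref{eq:Itomu}) there is a bijection $I\leftrightarrow\mu$ between $\binom{[n]}{k}$ and $P(k,n-k)$, and by (\ref{eq:Sll+rI=Sll(rI)}) (applied with $\ll=0$) we have $S_{(r^I)}V=(-1)^{|\mu|}S_{0(r,I)}V$, where $0(r,I)$ is defined by (\ref{eq:defllrI}). So I must check that the weight $0(r,I)$ of (\ref{eq:defllrI}) agrees, after the sorting bookkeeping built into (\ref{eq:defSll}), with the concatenated weight $(\mu'+(r^k),\mu)$ appearing above — being careful that the bijection of (\ref{eq:Itomu}) pairs $I$ with a partition in $P(k,n-k)$ whereas my sum is indexed by $P(n-k,k)$, so the roles of $\mu$ and $\mu'$ are swapped relative to the display. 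Concretely, writing $I=\{i_1<\cdots<i_k\}$ and $I^c=\{i_{k+1}<\cdots<i_n\}$, formula (\ref{eq:defllrI}) gives $0(r,I)_t=r+t+\ll_{i_t}-i_t$ restricted to $\ll=0$, i.e. $r+t-i_t$ for $t\le k$ and $t-i_t$ for $t>k$; and by (\ref{eq:Icfrommu'}) these are precisely the entries of $\mu'+\d_k$ shifted by $r$ (in the first block) and of $\mu+\delta_{n-k}$ (in the second block), under the correspondence (\ref{eq:Itomu}). Matching signs is automatic since both sides carry the factor $(-1)^{|\mu|}$. Once this index-chasing is done, summing over all $I$ reproduces $p_{k,r}(V)$ exactly, completing the proof. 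The only subtlety is keeping the conventions consistent — in particular that when $0(r,I)$ fails to be dominant the corresponding $S$ vanishes on both sides by (\ref{eq:vanishSll}), so no terms are lost or double-counted.
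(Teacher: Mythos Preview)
Your strategy is exactly the paper's: Cauchy, twist, Bott, then match against the bijection (\ref{eq:Itomu}) and the formula (\ref{eq:Sll+rI=Sll(rI)}) with $\ll=0$. However, the twist step contains a genuine error that propagates through the rest of your computation. You write that tensoring $S_{\mu'}\mc{Q}^*$ by $\det(\mc{Q})^{\oo r}$ yields $S_{\mu'+(r^k)}\mc{Q}$. This is false: for a rank $k$ bundle one has $S_{\a}\mc{Q}^*=S_{(-\a_k,\dots,-\a_1)}\mc{Q}$, so
\[
S_{\mu'}\mc{Q}^*\oo\det(\mc{Q})^{\oo r}=S_{(r-\mu'_k,\,r-\mu'_{k-1},\,\dots,\,r-\mu'_1)}\mc{Q},
\]
and the concatenated weight that Bott produces is $(r-\mu'_k,\dots,r-\mu'_1,\mu_1,\dots,\mu_{n-k})$, not $(\mu'+(r^k),\mu)$. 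These two weights are not related by the sorting convention (\ref{eq:defSll}); they index different virtual representations.

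With the correct weight in hand, the identification with $0(r,I)$ goes through cleanly: via (\ref{eq:Itomu}) one has $i_t=\mu'_{k+1-t}+t$ for $t\le k$, giving $0(r,I)_t=r+t-i_t=r-\mu'_{k+1-t}$, and via (\ref{eq:Icfrommu'}) one has $i_{k+j}=k+j-\mu_j$, giving $0(r,I)_{k+j}=\mu_j$. So $0(r,I)$ is literally equal to $(r-\mu'_k,\dots,r-\mu'_1,\mu_1,\dots,\mu_{n-k})$, no sorting bookkeeping required, and (\ref{eq:Sll+rI=Sll(rI)}) then gives $S_{(r^I)}V=(-1)^{|\mu|}S_{0(r,I)}V$ as you wanted. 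The ``index-chasing'' you anticipated is therefore straightforward once the dualization is done correctly; your sketch of it (entries of ``$\mu'+\d_k$ shifted by $r$'') does not match either side.
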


\begin{proof}
Cauchy's formula \cite[Cor.~2.3.3]{weyman} yields
\[\bw^i(\mc{R}\oo\mc{Q}^*)=\bigoplus_{\mu\in P(k,n-k),\ |\mu|=i} S_{\mu}\mc{Q}^*\oo S_{\mu'}\mc{R}.\]
Twisting by $\mc{O}_X(r)=\det(\mc{Q})^{\oo r}=S_{(r^k)}\mc{Q}$, and taking Euler characteristics, we get using Theorem~\ref{thm:bott}
\[\chi(X,\Omega^i_X(r))=\sum_{\mu\in P(k,n-k),\ |\mu|=i}S_{(r-\mu_k,r-\mu_{k-1},\cdots,r-\mu_1,\mu'_1,\cdots,\mu'_{n-k})}V.\]
Using (\ref{eq:Sll+rI=Sll(rI)}) with $\ll=0$, we get $S_{(r-\mu_k,r-\mu_{k-1},\cdots,r-\mu_1,\mu'_1,\cdots,\mu'_{n-k})}V=(-1)^{|\mu|}\cdot S_{(r^I)}V$, so
\[\sum_{i=0}^{k\cdot(n-k)}(-1)^i\cdot\chi(X,\Omega_X^i(r))=\sum_{I\in{[n]\choose k}}S_{(r^I)}V=p_{k,r}(V).\qedhere\]
\end{proof}

\subsection{$\D$-modules {\cite{borel}, \cite{hottaetal}}}\label{subsec:Dmods}

For a smooth algebraic variety $X$ over $\bb{C}$, we let $\D_X$ denote the sheaf of \defi{differential operators} on $X$ \cite[Section~1.1]{hottaetal}. A \defi{$\D$-module} $\mc{M}$ on $X$ (or a $\D_X$-module) is a quasi-coherent sheaf $\mc{M}$ on $X$, with a left module action of $\D_X$. 

\begin{definition}\label{def:GequivariantDmod}
 Let $G$ be an algebraic group acting on $X$, and let $\mc{M}$ be a $\D_X$-module. Differentiating the action of $G$ on $X$ yields a map $d:\Lie(G)\to\Der_X$ from the Lie algebra of $G$ to the vector fields on $X$. The $\D_X$-module operation
 \begin{equation}\label{eq:Dmodaction}
\D_X\oo\mc{M}\to\mc{M},  
 \end{equation}
 composed with $d$ yields an action of $\Lie(G)$ on $\mc{M}$. The $\D_X$-module $\mc{M}$ is \defi{$G$-equivariant} if 
 \begin{enumerate}
  \item[(a)] $\mc{M}$ admits an action of $G$ compatible with (\ref{eq:Dmodaction}) (see \cite[Def.~11.5.2]{hottaetal} for a precise meaning of compatibility). 
  \item[(b)] The action of $\Lie(G)$ on $\mc{M}$ obtained by differentiating the action of $G$ on $\mc{M}$ coincides with the one induced from $d:\Lie(G)\to\Der_X$ and (\ref{eq:Dmodaction}).
 \end{enumerate}
\end{definition}

As discussed in the Introduction, examples of $G$-equivariant holonomic $\D_X$-modules are $\mc{O}_X$, and for a $G$-invariant subset $Y\subset X$, the local cohomology modules $\mc{H}_Y^{\bullet}(X,\mc{O}_X)$, as well as the intersection homology $\D$-modules $\mc{L}(Y,X)$. When $X=U$ is a vector space, and $Y=\{0\}$ is the origin, we let
\begin{equation}\label{eq:defE}
 E = \mc{L}(\{0\},U) = \mc{H}_{\{0\}}^{\dim(U)}(U,\mc{O}_U),
\end{equation}
be the unique simple $\D_U$-module supported at the origin. As a vector space (and a $G$-representation)
\begin{equation}\label{eq:E=detSym}
E=\det(U)\oo\Sym(U). 
\end{equation}

The following theorem gives a classification of the simple equivariant holonomic $\D$-modules, for a group action with finitely many orbits (see \cite[Section~11.6]{hottaetal}):

\begin{theorem}\label{thm:equivRH}
 Let $G$ be an algebraic group acting with finitely many orbits on a smooth algebraic variety~$X$. There is a one-to-one correspondence between:
 \begin{enumerate}
  \item[(a)] Simple $G$-equivariant holonomic $\D_X$-modules.
  \item[(b)] Pairs $(O,L)$ where $O$ is a $G$-orbit, and $L$ is an irreducible $G$-equivariant local system on $O$.
  \item[(b')] Pairs $(O,L)$ where $O$ is a $G$-orbit, and $L$ is an irreducible representation of the component group of the isotropy group of $O$.
 \end{enumerate}
\end{theorem}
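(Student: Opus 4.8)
The plan is to deduce this classification from the Riemann--Hilbert correspondence together with the structure theory of perverse sheaves, following \cite[Section~11.6]{hottaetal}. The first point is that, under the finiteness hypothesis, every $G$-equivariant holonomic $\D_X$-module $\mc{M}$ is automatically \emph{regular} holonomic and is constructible with respect to the orbit stratification. Indeed, the characteristic variety $\mathrm{Ch}(\mc{M})\subset T^*X$ is a closed, conic, $G$-stable subvariety, which is involutive by Gabber's theorem and hence (by holonomicity) Lagrangian; a $G$-stable conic Lagrangian subvariety of $T^*X$ must be a union of conormal bundles $\overline{T^*_O X}$ to the orbits $O$. Since there are finitely many orbits, the orbit stratification is a finite Whitney stratification, and a holonomic $\D$-module whose characteristic variety lies in the union of the conormals to such a stratification has regular singularities: its simple subquotients are intermediate extensions of $G$-equivariant local systems on orbits $O\cong G/H$, whose monodromy factors through the finite component group $\pi_0(H)=H/H^{\circ}$ and is therefore regular (here one uses that $H$, a closed subgroup of the algebraic group $G$ over $\bb{C}$, has only finitely many connected components).

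Next I would invoke the Riemann--Hilbert correspondence \cite{borel}, \cite{hottaetal}: the de Rham functor induces an equivalence between the category of regular holonomic $\D_X$-modules and the category $\mathrm{Perv}(X)$ of perverse sheaves of $\bb{C}$-vector spaces on $X$. This equivalence is canonical, hence $G$-equivariant, and by the previous paragraph it restricts to an equivalence between the category of $G$-equivariant (regular) holonomic $\D_X$-modules and the category of $G$-equivariant perverse sheaves on $X$ constructible along the orbit stratification. In particular it matches simple objects with simple objects, so it suffices to classify the simple $G$-equivariant perverse sheaves on $X$.

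The classification of simple perverse sheaves (Beilinson--Bernstein--Deligne--Gabber) in its equivariant form states that every simple $G$-equivariant perverse sheaf on $X$ is an intermediate extension $\mathrm{IC}_X(\overline{O},L)$ for a unique orbit $O$ and a unique irreducible $G$-equivariant local system $L$ on $O$, and conversely that $\mathrm{IC}_X(\overline{O},L)$ is $G$-equivariant whenever $L$ is; this gives the bijection between (a) and (b). Finally, for an orbit $O\cong G/H$, restricting a $G$-equivariant sheaf to the fiber over the base point identifies $G$-equivariant local systems on $O$ with finite-dimensional complex representations of $H$ that are trivial on the identity component $H^{\circ}$, i.e. with representations of the (finite) component group $\pi_0(H)$; passing to irreducible objects gives the equivalence of (b) and (b').

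The main obstacle is the regularity statement in the first step --- namely, justifying that a $G$-stable conic Lagrangian in $T^*X$ is a union of orbit conormals, and that the resulting holonomic modules have regular singularities so that Riemann--Hilbert applies. Once this is in place, the remainder is a formal consequence of the Riemann--Hilbert equivalence and the (equivariant) decomposition theory of perverse sheaves, together with the elementary identification of equivariant local systems with component-group representations.
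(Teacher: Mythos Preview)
The paper does not give its own proof of this theorem; it states the result and points the reader to \cite[Section~11.6]{hottaetal}. Your proposal is precisely a fleshed-out version of the argument one finds there, so in that sense your approach agrees with what the paper invokes.

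One point deserves tightening. You write that ``a holonomic $\D$-module whose characteristic variety lies in the union of the conormals to such a stratification has regular singularities.'' Taken literally this is false: on $\bb{A}^1$ with the scaling $\bb{G}_m$-action, the module $\D/\D(x^2\partial+1)$ (essentially $e^{1/x}$) has characteristic variety equal to the union of the zero section and the cotangent fiber at the origin, yet is irregular at $0$. What is true, and what you actually use in the next clause, is that a \emph{$G$-equivariant} holonomic module is regular. For this you should argue directly on a simple such module $\mc{M}$: by the purely $\D$-module-theoretic classification of simple holonomic modules (no Riemann--Hilbert needed, cf.\ \cite[Thm.~3.4.2]{hottaetal}), $\mc{M}$ is the minimal extension of an irreducible integrable connection $L$ on a locally closed smooth irreducible $Y\subset X$. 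Equivariance of $\mc{M}$ forces $Y$ to be $G$-stable, hence (under the finite-orbit hypothesis) an orbit $O\cong G/H$, and forces $L$ to be $G$-equivariant. Now your observation that equivariant connections on $G/H$ correspond to representations of the finite group $\pi_0(H)$ gives finite monodromy, hence regularity of $L$ and of its minimal extension $\mc{M}$. With this correction in place, the rest of your outline (Riemann--Hilbert, BBDG classification of simple perverse sheaves, and the identification $(b)\leftrightarrow(b')$) is sound.
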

\noindent Here by the \defi{isotropy group} of $O$ we mean the stabilizer of any element in $O$ (they are all isomorphic). For an algebraic group $H$, we denote by $H_0$ the connected component of the identity, which is a normal subgroup of $H$. The quotient $H/H_0$ is called the \defi{component group} of $H$.

\begin{remark}\label{rem:Ltrivial}
 When the representation $L$ in Theorem~\ref{thm:equivRH}(b') is trivial, the corresponding $\D_X$-module in part (a) is $\mc{L}(\ol{O},X)$, where $\ol{O}$ is the closure of $O$. It follows that in the case when the isotropy groups for the $G$-action on $X$ are connected, there is a one-to-one correspondence between simple $G$-equivariant $\D_X$-modules and orbits of the group action.
\end{remark}

Let $m\geq n$ be positive integers and consider the complex vector spaces $M$ of general $m\times n$ matrices, $M^{\symm}$ of $n\times n$ symmetric, and $M^{\skew}$ of $n\times n$ skew-symmetric matrices respectively. These spaces admit a natural action of a group $\GL$ via row and column operations: $\GL_m(\bb{C})\times\GL_n(\bb{C})$ acts on $M$, and $\GL_n(\bb{C})$ acts $M^{\symm}$ and $M^{\skew}$. We write $M_s$ (resp. $M^{\symm}_s$) for the subvariety of $M$ (resp. $M^{\symm}$) consisting of matrices of rank at most $s$, for $s=0,\cdots,n$, and $M^{\skew}_s$ for the subvariety of $M^{\skew}$ consisting of skew-symmetric matrices of rank at most $2s$, for $s=0,\cdots,\lfloor n/2\rfloor$. We have the following:

\begin{theorem}[Classification of simple $\GL$-equivariant holonomic $\D$-modules on spaces of matrices]\label{thm:classificationDmods}\

\begin{itemize}
 \item (General matrices). There are $(n+1)$ simple $\GL$-equivariant $\D$-modules on the vector space $M$ of $m\times n$ matrices, namely the intersection homology $\D$-modules $\mc{L}(M_{s},M)$, $s=0,\cdots,n$.
 \item (Symmetric matrices). There are $(2n+1)$ simple $\GL$-equivariant $\D$-modules on the vector space $M^{\symm}$ of $n\times n$ symmetric matrices, $(n+1)$ of which are the intersection homology $\D$-modules $\mc{L}(M^{\symm}_{s},M^{\symm})$, $s=0,\cdots,n$, while the remaining ones are the intersection homology $\D$-modules $\mc{L}(M^{\symm}_{s},M^{\symm};1/2)$, $s=1,\cdots,n$, corresponding to the non-trivial irreducible equivariant local systems on the orbits.
 \item (Skew-symmetric matrices). There are $(\lfloor n/2\rfloor+1)$ simple $\GL$-equivariant $\D$-modules on the vector space $M^{\skew}$ of $n\times n$ skew-symmetric matrices, namely $\mc{L}(M^{\skew}_{s},M^{\skew})$, $s=0,\cdots,\lfloor n/2\rfloor$.
\end{itemize}

\end{theorem}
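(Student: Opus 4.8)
The statement is a classification of simple $\GL$-equivariant holonomic $\D$-modules on the three matrix spaces, and by Theorem~\ref{thm:equivRH} and Remark~\ref{rem:Ltrivial} it reduces entirely to orbit geometry: we must (1) describe the orbits of the $\GL$-action, and (2) compute, for a representative point in each orbit, the component group $H/H_0$ of the isotropy group $H$, together with its irreducible representations. For each orbit the simple $\D$-modules supported (with full support) on its closure are then in bijection with $\widehat{H/H_0}$. The plan is to carry this out case by case, since the stabilizer computations differ in character.

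**General matrices.** The orbits of $\GL_m(\bb{C})\times\GL_n(\bb{C})$ on $M$ are exactly the rank loci $O_s = M_s\setminus M_{s-1}$, $s=0,\cdots,n$, since a matrix is determined up to row/column operations by its rank. For the representative $\left(\begin{smallmatrix} I_s & 0 \\ 0 & 0\end{smallmatrix}\right)$ the stabilizer is a parabolic-type subgroup whose reductive part surjects onto $\GL_s(\bb{C})$ (acting diagonally via $g$ on rows, $g^{-1}$ on columns) times $\GL_{m-s}(\bb{C})\times\GL_{n-s}(\bb{C})$, with a unipotent radical; in particular it is connected. Hence each orbit carries only the trivial local system, giving exactly the $n+1$ modules $\mc{L}(M_s,M)$.

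**Skew-symmetric matrices.** Here the orbits of $\GL_n(\bb{C})$ on $M^{\skew}$ are the loci of fixed even rank $2s$, $s=0,\cdots,\lfloor n/2\rfloor$ — the rank of a skew form is even, and two skew forms of the same rank are equivalent. The stabilizer of a rank-$2s$ form is, up to a connected unipotent part, $\Sp_{2s}(\bb{C})\times\GL_{n-2s}(\bb{C})$, which is connected. So again every local system is trivial and we get the $\lfloor n/2\rfloor+1$ modules $\mc{L}(M^{\skew}_s,M^{\skew})$.

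**Symmetric matrices — the main point.** The orbits of $\GL_n(\bb{C})$ on $M^{\symm}$ are the fixed-rank loci $O_s$, $s=0,\cdots,n$ (over $\bb{C}$ a symmetric form is classified by its rank). The new feature is that the stabilizer of a rank-$s$ symmetric form has reductive part (modulo a connected unipotent radical) isomorphic to $O_s(\bb{C})\times\GL_{n-s}(\bb{C})$, and the orthogonal group $O_s(\bb{C})$ has component group $\bb{Z}/2\bb{Z}$ for $s\geq 1$ (detected by the determinant), while $O_0 = \{e\}$ is trivial. Thus for $s=0$ we get one module ($\mc{L}(\{0\},M^{\symm})$), and for each $s=1,\cdots,n$ we get two: the one attached to the trivial character of $\bb{Z}/2\bb{Z}$, namely $\mc{L}(M^{\symm}_s,M^{\symm})$, and the one attached to the sign character, namely $\mc{L}(M^{\symm}_s,M^{\symm};1/2)$. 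This totals $1 + 2n = 2n+1$. The main obstacle is precisely pinning down the component group of the isotropy group in the symmetric case — one must verify that passing to the (connected) unipotent radical does not change $\pi_0$, identify the reductive part correctly as $O_s\times\GL_{n-s}$, and confirm that $O_s(\bb{C})$ is genuinely disconnected with $\pi_0 = \bb{Z}/2\bb{Z}$ (so that exactly two local systems occur, not more). Once the stabilizer structure is established, the counting and the identification of each pair $(O,L)$ with the stated intersection homology module is immediate from Theorem~\ref{thm:equivRH} and Remark~\ref{rem:Ltrivial}.
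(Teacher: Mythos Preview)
Your proposal is correct and follows precisely the approach of the paper's proof, which simply states that the result follows from Theorem~\ref{thm:equivRH} and Remark~\ref{rem:Ltrivial} since the isotropy groups are connected for general and skew-symmetric matrices, while for symmetric matrices the isotropy groups of the non-zero orbits have two connected components. You have supplied the explicit stabilizer computations (parabolic-type for general, $\Sp_{2s}\times\GL_{n-2s}$ with unipotent radical for skew-symmetric, $O_s\times\GL_{n-s}$ with unipotent radical for symmetric) that justify this one-line claim, so your write-up is essentially an expanded version of the paper's proof.
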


\begin{proof}
 The theorem follows from Theorem~\ref{thm:equivRH} and Remark~\ref{rem:Ltrivial}, since the isotropy groups for general and skew-symmetric matrices are connected, while for symmetric matrices the isotropy groups of the non-zero orbits have two connected components.
\end{proof}

\subsection{Computing Euler characteristics}\label{subsec:pfwdDmods}

Let $X$ be a smooth complex projective algebraic variety and denote its dimension by $d_X$. Consider a finite dimensional vector space $U$, and a short exact sequence
\begin{equation}\label{eq:basicses}
0\lra\xi\lra U\oo\mc{O}_X\lra\eta\lra 0, 
\end{equation}
where $\xi,\eta$ are locally free sheaves on $X$. We think of $U^*$ as an affine space, and of~$U$ as linear forms on~$U^*$. We let $Y=\rm{Tot}_X(\eta^*)$ denote the total space of the bundle $\eta^*$, and define a morphism $\pi:Y\to U^*$ via the commutative diagram
\begin{equation}\label{eq:diagrGeneric}
\xymatrix{
Y=\rm{Tot}_X(\eta^*) \ar@{^{(}->}[r] \ar[dr]_{\pi} & U^*\times X \ar[d] \\
 & U^* \\
}
\end{equation}
where the top map is the inclusion of $\eta^*$ into the trivial bundle~$U^*$, and the vertical map is the projection onto the $U^*$ factor. We will be interested in understanding the (Euler characteristic of the) $\D$-module pushforward $\int_{\pi}\mc{M}$ along the map $\pi$ for certain $\D_Y$-modules $\mc{M}$. For affine morphisms $X'\to X$, we will identify freely quasi-coherent sheaves on $X'$ with quasi-coherent $\mc{O}_{X'}$-modules on $X$ as in \cite[Exercise~II.5.17(e)]{hartshorne}.

We let $\mc{S}=\Sym_{\mc{O}_X}(\eta)$, so that $Y=\ul{\Spec}_X(\mc{S})$, and consider a locally free sheaf $\mc{L}$ of rank one with $\mc{L}\subset\Sym^i(\eta)$ for some $i>0$. We pull-back $\mc{L}$ to $Y$, define $L=\rm{Tot}_Y(\mc{L}^*)$ to be the total space of the line bundle $\mc{L}^*$, and write $p:L\to Y$ for the natural map. The inclusion $\mc{L}\subset\Sym^i(\eta)$ defines a section $z:Y\to L$ of $p$ \cite[Exercise~II.5.18(c)]{hartshorne}, and we define $Z$ to be the zero-locus of $z$. If $X=\rm{Spec}(\bb{C})$ then $Y$ is an affine space, $\mc{S}$ is the ring of polynomial functions on $Y$, $\mc{L}$ corresponds to (the vector space spanned by) a polynomial $f\in\mc{S}$ of degree $i$, and $Z$ is the vanishing locus of $f$.

We consider the complement $Y^0=Y\setminus Z$ and let $j:Y^0\to Y$ denote the inclusion. Since $j$ is an affine open immersion, $\int_j\mc{O}_{Y^0}=\mc{O}_{Y^0}$ can be thought of as a quasi-coherent sheaf of algebras on $Y$ (or on $X$):
\[\mc{O}_{Y^0}=\varinjlim_r \mc{L}^{-r}\oo\mc{O}_Y=\varinjlim_r \mc{L}^{-r}\oo\mc{S}.\]
In the case when $X=\rm{Spec}(\bb{C})$, we have $\mc{O}_{Y^0}=\mc{S}_f$ is the localization of $\mc{S}$ at $f$, which is a $\D$-module on the affine space $Y$. We define the quasi-coherent sheaf $\mc{S}^{\vee}$ on $X$ (the \defi{graded dual of $\mc{S}$}) by
\[\mc{S}^{\vee} = \det(\eta^*)\oo\Sym_{\mc{O}_X}(\eta^*).\]

\begin{proposition}\label{prop:EuleraslimPr}
 With the notation above, we assume that $X$ admits an action of a reductive group $G$, that $U$ is a finite dimensional $G$-representation, and that $\xi,\eta,\mc{L}$ are $G$-equivariant locally free sheaves. Assume further that we have an isomorphism of $G$-equivariant quasi-coherent sheaves on $X$
\begin{equation}\label{eq:twolims}
\mc{O}_{Y^0}\simeq\varinjlim_r\mc{L}^{r}\oo\mc{S}^{\vee}. 
\end{equation}
Let $\mc{M}$ be a $\D_Y$-module which is isomorphic, as a quasi-coherent $G$-equivariant sheaf on $X$, to $\mc{O}_{Y^0}\oo_{\mc{O}_X}\mc{L}'$, with $\mc{L}'$ a line bundle on $X$. We denote by $\Omega^i_X$ the sheaf of $i$-differential forms on $X$, and assume that for every $i=0,\cdots,d_X$ the sheaves $\Omega^i_X\oo\mc{M}\oo\det(\xi^*)\oo\Sym_{\mc{O}_X}(\xi^*)$ have $G$-admissible cohomology.
If we define the sequence $P_r(X,\mc{L};\mc{L}')\in\G(G)$ via 
\[P_r(X,\mc{L};\mc{L}')=\sum_{i=0}^{d_X} (-1)^{d_X-i}\cdot\chi(X,\mc{L}^r\oo\mc{L}'\oo\Omega^i_X),\]
then
\begin{equation}\label{eq:chiMlimitPr}
 \chi\left(\int_{\pi}\mc{M}\right)=\lim_{r\to\infty} P_r(X,\mc{L};\mc{L}')\oo\det(U^*)\oo\Sym_{\bb{C}}(U^*).
\end{equation}
\end{proposition}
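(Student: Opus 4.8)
The plan is to compute $\chi(\int_\pi \mc{M})$ by factoring the map $\pi$ as the composition of the closed embedding $s: Y \hookrightarrow U^*\times X$ followed by the projection $p: U^*\times X \to U^*$, exactly as in the diagram (\ref{eq:diagrGeneric}), and to use the two base ingredients available to us: the compatibility of $\D$-module pushforward along a projection $U^*\times X \to U^*$ with sheaf cohomology on $X$ (together with the Fourier/Koszul bookkeeping coming from the tensor factor $\det(U^*)\oo\Sym(U^*)$), and Bott's theorem in the guise of Lemma~\ref{lem:pkrispfwd}. First I would record that, since $s$ is a regular embedding of $Y = \rm{Tot}_X(\eta^*)$ into the trivial bundle $U^*\times X$ with normal bundle pulled back from $\xi^*$ via the dual of (\ref{eq:basicses}), the $\D$-module pushforward $\int_s \mc{M}$ is computed by a Koszul-type complex on $U^*\times X$ whose terms are $\mc{M}$ tensored with exterior powers $\bw^j \xi$; taking the $\mc{O}_X$-linear graded dual, this contributes the factor $\det(\xi^*)\oo\Sym_{\mc{O}_X}(\xi^*)$ appearing in the hypothesis, which is precisely the reason one asks for $\Omega^i_X\oo\mc{M}\oo\det(\xi^*)\oo\Sym_{\mc{O}_X}(\xi^*)$ to have $G$-admissible cohomology.

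Next I would handle the projection $p: U^*\times X \to U^*$. For a $\D$-module on $U^*\times X$ that is (as a quasi-coherent sheaf) of the form $\mc{N}\boxtimes(\text{something on }U^*)$, the pushforward $\int_p$ is computed by the relative de Rham complex, i.e. by tensoring with the sheaves $\Omega^i_X$ and taking cohomology along $X$; since $X$ is projective and the relevant cohomology is $G$-admissible by hypothesis, the Euler characteristic of $\int_p$ of such an object is $\sum_i (-1)^{d_X - i}\chi(X, \Omega^i_X \oo \mc{N})$ tensored with the $U^*$-contribution. Combining this with the previous paragraph and with the explicit description $\mc{M}\simeq\mc{O}_{Y^0}\oo_{\mc{O}_X}\mc{L}'$ and $\mc{O}_{Y^0}\simeq\varinjlim_r \mc{L}^r\oo\mc{S}^\vee = \varinjlim_r \mc{L}^r\oo\det(\eta^*)\oo\Sym_{\mc{O}_X}(\eta^*)$, one sees that $\int_\pi\mc{M}$ is, at the level of Euler characteristics, the colimit over $r$ of
\[
\sum_{i=0}^{d_X}(-1)^{d_X-i}\chi\!\left(X,\ \mc{L}^r\oo\mc{L}'\oo\Omega^i_X\oo\det(\eta^*)\oo\Sym_{\mc{O}_X}(\eta^*)\oo\det(\xi^*)\oo\Sym_{\mc{O}_X}(\xi^*)\right)
\]
tensored with $\det(U^*)\oo\Sym_{\bb{C}}(U^*)$. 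The point of the isomorphism (\ref{eq:twolims}) is that it lets us replace the localization $\mc{O}_{Y^0}$ — which a priori involves negative powers $\mc{L}^{-r}\oo\Sym(\eta)$ — by the colimit of the positive powers $\mc{L}^r\oo\Sym(\eta^*)$, so that all the $\mc{L}^r$ twists push in the same direction and the colimit makes sense in $\G(G)$; absorbing $\det(\eta^*)\oo\Sym(\eta^*)\oo\det(\xi^*)\oo\Sym(\xi^*)$ appropriately (using the dual of (\ref{eq:basicses}) to relate $\Sym(\eta^*)\oo\Sym(\xi^*)$ with a Koszul-type expression) collapses the expression to exactly $P_r(X,\mc{L};\mc{L}')\oo\det(U^*)\oo\Sym_{\bb{C}}(U^*)$, giving (\ref{eq:chiMlimitPr}).

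The main obstacle, and the step requiring the most care, is the commutation of $\D$-module pushforward with the colimit $\varinjlim_r$ and with taking Euler characteristics simultaneously: a priori $\int_\pi$ is a derived functor and the colimit is filtered, so one must check that for each fixed irreducible $G$-representation $L$ the multiplicity $\scpr{\chi(\int_\pi\mc{M})}{L}$ stabilizes and agrees with the stable value of $\scpr{P_r(X,\mc{L};\mc{L}')\oo\det(U^*)\oo\Sym_{\bb{C}}(U^*)}{L}$ — this is where the $G$-admissibility hypotheses on the sheaves $\Omega^i_X\oo\mc{M}\oo\det(\xi^*)\oo\Sym_{\mc{O}_X}(\xi^*)$ are essential, since they guarantee that each $\chi(X,\mc{L}^r\oo\mc{L}'\oo\Omega^i_X\oo\cdots)$ is a well-defined element of $\G(G)$ and that the sequence in $r$ is convergent in the sense of Section~\ref{subsubsec:admissible}. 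A secondary point to get right is the precise identification of the $U^*$-factor: tracking through the Koszul resolution for $s$ and the relative de Rham complex for $p$, the affine direction $U^*$ contributes $\int$ of $\mc{O}$ along a linear projection of affine spaces, which on Euler characteristics is multiplication by $\det(U^*)\oo\Sym_{\bb{C}}(U^*) = E$ (cf. (\ref{eq:defE}), (\ref{eq:E=detSym})) — I would verify the sign and the fact that it is $\det(U^*)$ rather than $\det(U)$ by comparing with the simplest case $X = \Spec(\bb{C})$, where the statement reduces to the classical computation of $\chi$ of the localization $\mc{S}_f$ as a $\D$-module on an affine space.
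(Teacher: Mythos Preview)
Your overall strategy---factor $\pi = p \circ s$, compute $\int_s$ via Kashiwara's description and $\int_p$ via the relative de Rham complex, then use (\ref{eq:twolims}) and the filtration coming from (\ref{eq:basicses}) to simplify---is exactly what underlies the paper's proof. The paper outsources the first step to \cite[Cor.~2.10]{raicu-VeroDmods}, which directly yields
\[\chi\left(\int_{\pi}\mc{M}\right) = \sum_{i=0}^{d_X} (-1)^{d_X-i}\cdot\chi\bigl(X,\Omega^i_X\oo\mc{M}\oo\det(\xi^*)\oo\Sym_{\mc{O}_X}(\xi^*)\bigr),\]
and then carries out precisely the colimit and associated-graded manipulation you describe.

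There is, however, a genuine bookkeeping error in your proposal: the factor $\det(U^*)\oo\Sym_{\bb{C}}(U^*)$ is counted twice. In your displayed formula you already have $\det(\eta^*)\oo\Sym(\eta^*)\oo\det(\xi^*)\oo\Sym(\xi^*)$ inside the Euler characteristic; by the associated-graded argument from (\ref{eq:basicses}) this is the trivial bundle on $X$ carrying the $G$-structure $\det(U^*)\oo\Sym(U^*)$, and pulling it outside the $\chi$ is the \emph{sole} source of that factor in (\ref{eq:chiMlimitPr}). Your additional external tensor with $\det(U^*)\oo\Sym(U^*)$, which you attribute to ``the affine direction $U^*$'' in $\int_p$, is spurious: the map $p\colon U^*\times X \to U^*$ has fibre $X$, not an affine space, and $\int_p$ lands in $\D_{U^*}$-modules without introducing any $E$-type factor. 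Your own sanity check makes this visible: when $X=\Spec(\bb{C})$ one has $\xi=0$, $\eta=U$, and your displayed expression, read literally, gives $\mc{L}^r\oo\mc{L}'\oo\det(U^*)\oo\Sym(U^*)$ tensored once more with $\det(U^*)\oo\Sym(U^*)$, which is one copy too many to recover $S_f$. Drop the extra external factor and your argument matches the paper's.
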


\begin{remark}\label{rem:Euleraslim}
 We will apply Proposition~\ref{prop:EuleraslimPr} in the case when $X=\bb{G}(k,V)$ is a Grassmann variety, and $\mc{L}=\mc{O}_X(1)$ is the Pl\"ucker line bundle (or its square). It follows from Lemma~\ref{lem:pkrispfwd} that 
 \[P_r(X,\mc{O}_X(1);\mc{O}_X)=(-1)^{k(n-k)}\cdot p_{k,r}(V).\]
 It follows that if $X=\bb{G}(k,V_1)\times\bb{G}(k,V_2)$, where $\dim(V_1)=m$, $\dim(V_2)=n$, and if $\mc{L}=\mc{O}_X(1,1)$ then $P_r(X,\mc{O}_X(1);\mc{O}_X)=(-1)^{k\cdot(m-n)}\cdot p_{k,r}(V_1)\oo p_{k,r}(V_2)$.
\end{remark}

\begin{proof}[Proof of Proposition~\ref{prop:EuleraslimPr}]
 Since the sheaves $\Omega^i_X\oo\mc{M}\oo\det(\xi^*)\oo\Sym_{\mc{O}_X}(\xi^*)$ have admissible cohomology, it follows from \cite[Corollary~2.10]{raicu-VeroDmods} that
\begin{equation}\label{eq:intM}
\chi\left(\int_{\pi}\mc{M}\right) = \sum_{i=0}^{d_X} (-1)^{d_X-i}\cdot\chi(X,\Omega^i_X\oo\mc{M}\oo\det(\xi^*)\oo\Sym_{\mc{O}_X}(\xi^*)). 
\end{equation}
Computing Euler characteristics commutes with colimits and associated graded constructions. By (\ref{eq:basicses}) we get a filtration of $U^*\oo\mc{O}_X$ with $\gr(U^*\oo\mc{O}_X)=\xi^*\oplus\eta^*$, which yields a filtration of $\Sym_{\mc{O}_X}(U^*\oo\mc{O}_X)$ with $\gr(\Sym_{\mc{O}_X}(U^*\oo\mc{O}_X))=\Sym_{\mc{O}_X}(\xi^*)\oo\Sym_{\mc{O}_X}(\eta^*)$. We also get that $\det(U^*\oo\mc{O}_X)=\det(\xi^*)\oo\det(\eta^*)$, and therefore
\[
\begin{aligned}
\chi(X,\Omega_X^i\oo\mc{L}'\oo\mc{L}^r\oo\mc{S}^{\vee}\oo\det(\xi^*)&\oo\Sym_{\mc{O}_X}(\xi^*)) = \chi(X,\Omega_X^i\oo\mc{L}'\oo\mc{L}^r\oo\det(U^*)\oo\Sym_{\mc{O}_X}(U^*\oo\mc{O}_X)) \\
&\overset{U^*\rm{ is a trivial bundle}}{=} \chi(X,\Omega_X^i\oo\mc{L}'\oo\mc{L}^r)\oo\det(U^*)\oo\Sym_{\bb{C}}(U^*).
\end{aligned}
\]
Multiplying this equality by $(-1)^{d_X-i}$, summing over $i=0,\cdots,d_X$, taking the limit as $r\to\infty$, and using the identification (\ref{eq:twolims}) tensored with $\mc{L}'$, we get (\ref{eq:chiMlimitPr}).
\end{proof}

\subsection{The Weyl algebra and the Fourier transform}\label{subsec:Fourier}

For a positive integer $N$, the \defi{Weyl algebra} 
\begin{equation}\label{eq:defWeyl}
 \bb{C}[x_1,\cdots,x_N,\partial_1,\cdots,\partial_N],\ \partial_i=\frac{\partial}{\partial x_i} 
\end{equation}
is the ring of differential operators on $\bb{C}^N$. In this section we give a coordinate independent description of the Weyl algebra, and use it to describe the Fourier transform.

Given a finite dimensional $\bb{C}$--vector space $U$ of dimension $N$, we write $\scpr{}{}$ for the natural pairing $U\times U^*\to\bb{C}$. We let $\tl{U}=U\oplus U^*$ and define a non--degenerate skew--symmetric form $\om:\tl{U}\oo\tl{U}\to\bb{C}$ by
\[\om(u,u')=
\begin{cases}
 \scpr{u}{u'} & \textrm{if }u\in U,\ u'\in U^*, \\
 -\scpr{u'}{u} & \textrm{if }u'\in U,\ u\in U^*, \\
 0 & \textrm{otherwise}. \\
\end{cases}
\]
We write $T_n(\tl{U})$ for the tensor product $\tl{U}^{\oo n}$, and let $T(\tl{U})=\bigoplus_{n\geq 0}T_n(\tl{U})$ denote the \defi{tensor algebra} on $\tl{U}$. We have a natural inclusion $\bw^2\tl{U}\subset T_2(\tl{U})$, and define the \defi{Weyl algebra} $\mc{D}_{U^*}$ as the quotient
\begin{equation}\label{eq:WeylAlgebra}
\mc{D}_{U^*}=T(\tl{U})/\langle x-\om(x):x\in\bw^2\tl{U}\rangle
\end{equation}
of the tensor algebra by the bilateral ideal generated by differences $x-\om(x)$, with $x\in\bw^2\tl{U}$. Note that $\D_{U^*}$ is the ring of differential operators on the vector space $U^*$. If we choose a basis $x_1,\cdots,x_N$ of $U$, and the dual basis $\partial_1,\cdots,\partial_N$ of $U^*$, then $\D_{U^*}$ coincides with (\ref{eq:defWeyl}).

\begin{lemma}[Fourier transform]\label{lem:Fourier}
 If $M$ is a (left) $\mc{D}_{U}$-module, then $\det(U^*)\oo M$ has the structure of a (left) $\mc{D}_{U^*}$-module.
\end{lemma}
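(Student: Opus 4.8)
The plan is to exhibit an explicit ring isomorphism $\mc{D}_{U^*}\xrightarrow{\sim}\mc{D}_U$, or rather an anti-isomorphism that swaps the roles of ``position'' and ``momentum'', and then to twist the resulting $\mc{D}_{U^*}$-action on $M$ by the one-dimensional representation $\det(U^*)$ so that the compatibility with the $\mathrm{GL}$-action (which is what we ultimately care about) works out. Concretely, using the coordinate-free description (\ref{eq:WeylAlgebra}) of the Weyl algebra, I would first observe that $\tl{U}=U\oplus U^*$ and $\tl{U^*}=U^*\oplus U$ are the same vector space, but the canonical symplectic form $\om_{U^*}$ on $\tl{U^*}$ differs from $\om_U$ on $\tl{U}$ by a sign (interchanging the two Lagrangian summands reverses the orientation of the pairing). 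Hence the linear map $\phi\colon\tl{U^*}\to\tl{U}$ which is the identity on the underlying space intertwines $\om_{U^*}$ with $-\om_U$, and therefore induces an isomorphism $\mc{D}_{U^*}\to\mc{D}_U^{\op}$ of algebras, equivalently an isomorphism $\mc{D}_{U^*}\to\mc{D}_U$ after composing with the anti-automorphism of $\mc{D}_U$ that fixes $U$ and $U^*$ pointwise (the canonical transpose, which exists because $\om_U$ is skew). In coordinates this is just the familiar recipe $x_i\mapsto -\pd_i$, $\pd_i\mapsto x_i$ (or $x_i\mapsto\pd_i$, $\pd_i\mapsto -x_i$), which manifestly preserves the Heisenberg relations.

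Once the algebra (anti-)isomorphism $\Phi\colon\mc{D}_{U^*}\to\mc{D}_U$ is in hand, a left $\mc{D}_U$-module $M$ becomes a left $\mc{D}_{U^*}$-module by restriction of scalars along $\Phi$ (using the anti-automorphism to convert a right action into a left action if one sets things up on the $\op$ side). The only remaining point is the $\det(U^*)$ twist: the naive transport of structure already gives $M$ a $\mc{D}_{U^*}$-module structure, but the statement asserts it on $\det(U^*)\oo M$, and the reason is representation-theoretic compatibility. Under the $\mathrm{GL}(U)$-action, the symplectic space $\tl{U}$ carries the obvious action and $\mc{D}_{U^*}$ inherits a $\mathrm{GL}(U)$-equivariant structure; but the Fourier isomorphism $\Phi$ is equivariant only up to the character $g\mapsto\det(g)$ coming from how $\bw^2\tl{U}$ and the identification $U\cong (U^*)^*$ behave — tensoring with $\det(U^*)$ exactly compensates this discrepancy, so that $\det(U^*)\oo M$ is the correct object on which the $\mc{D}_{U^*}$-action is $\mathrm{GL}$-equivariant (this is also what makes the Fourier transform $\mc{F}$ an involution rather than merely an involution up to a character, consistent with its use in Remark~\ref{rem:charvarieties}).

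Therefore the key steps, in order, are: (1) write down the identity-on-underlying-space map $\phi\colon\tl{U^*}\to\tl{U}$ and check $\phi^*\om_U=-\om_{U^*}$ directly from the definition of $\om$; (2) deduce that $\phi$ extends to an algebra map $T(\tl{U^*})\to T(\tl{U})$ carrying the defining ideal $\langle x-\om_{U^*}(x)\rangle$ into (the opposite of) $\langle x-\om_U(x)\rangle$, hence an isomorphism $\mc{D}_{U^*}\cong\mc{D}_U^{\op}$; (3) compose with the canonical anti-automorphism of $\mc{D}_U$ (transposition of operators, available because $\om_U$ is skew-symmetric) to get an honest algebra isomorphism $\Phi\colon\mc{D}_{U^*}\to\mc{D}_U$, or equivalently observe that a left $\mc{D}_U$-module is a right $\mc{D}_U^{\op}$-module is a right, hence (via transpose) left, $\mc{D}_{U^*}$-module; (4) verify that on $\det(U^*)\oo M$ this action is the intended one by comparing it to the coordinate formula on $\bb{C}[x_1,\dots,x_N]$-modules and checking the $\mathrm{GL}(U)$-equivariance, the twist by $\det(U^*)$ absorbing the character obstruction.

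I expect step (4) — pinning down exactly why the $\det(U^*)$ appears and checking that it is not, say, $\det(U)$ or absent — to be the main obstacle, since it is the only place where a genuine choice/normalization is being made rather than a formal manipulation; everything in (1)–(3) is a routine unwinding of the coordinate-free Weyl algebra presentation already set up in (\ref{eq:WeylAlgebra}). A clean way to settle (4) is to note that the top exterior power $\bw^N\tl{U}$ is canonically trivial (it is $\bw^N U\oo\bw^N U^*\oo(\text{sign from reordering})$), so the ambiguity in identifying $\mc{D}_{U^*}$ with $\mc{D}_U$ lives precisely in $\bw^N U^*=\det(U^*)$, which is what must be tensored in; alternatively one simply checks the formula $\pd_i\cdot(\text{vol}\oo m)=\text{vol}\oo(x_i\cdot m)$ against the defining relations and reads off the twist.
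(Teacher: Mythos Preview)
Your steps (1)--(2) are exactly what the paper does: the identity on $\tl U=U\oplus U^*$ intertwines $\om_{U^*}$ with $-\om_U$ and induces $\mc D_{U^*}\cong\mc D_U^{\op}$, so a left $\mc D_U$-module $M$ becomes a right $\mc D_{U^*}$-module.

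The divergence is in how you pass from right to left, and here there is a genuine error. In step (3) you invoke ``the anti-automorphism of $\mc D_U$ that fixes $U$ and $U^*$ pointwise''. No such anti-automorphism exists: if $\phi$ is an anti-homomorphism fixing all degree-one generators, then $\phi([e,f])=\phi(ef-fe)=fe-ef=-[e,f]$, while $[e,f]$ is a nonzero scalar fixed by $\phi$, a contradiction. The formal adjoint (say $e\mapsto -e$, $f\mapsto f$) does exist, but it is already $\GL(U)$-equivariant, so composing with it yields a $\GL$-equivariant algebra isomorphism $\mc D_{U^*}\to\mc D_U$ and hence a $\GL$-equivariant left $\mc D_{U^*}$-structure on $M$ itself, with no twist. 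Thus your step (4) explanation---that the $\det(U^*)$ corrects an equivariance obstruction in $\Phi$---is wrong: there is no such obstruction, and tensoring a $\mc D$-module with a one-dimensional vector space does not change its $\mc D$-module structure anyway.

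The paper instead passes from right to left $\mc D_{U^*}$-modules by the canonical side-changing functor $N\mapsto\om_{U^*}^{-1}\otimes_{\mc O_{U^*}}N$ of \cite[Prop.~1.2.12]{hottaetal}. Since $\om_{U^*}$ is free of rank one generated by $\det(U)$, this is exactly $M\mapsto\det(U^*)\otimes M$, and the twist appears for a structural reason (the canonical bundle), not an equivariance one. This also pins down the \emph{specific} $\mc D_{U^*}$-structure intended (the one sending $\mc O_U$ to the simple module $E$ at the origin, as in the Example following the lemma), whereas your route produces an isomorphic but differently normalized structure on $M$ and then tacks on the twist without it playing any role in the $\mc D$-action.
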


\begin{example}
 The most basic example is when $M=\Sym(U^*)$ is the coordinate ring of $U$. In that case $\det(U^*)\oo\Sym(U^*)$ is equal to $E$, the simple holonomic $\D_{U^*}$-module supported at the origin (see (\ref{eq:defE})).
\end{example}

\begin{proof}[Proof of Lemma~\ref{lem:Fourier}]
 Using the identification of $\tl{U}^*$ with $\tl{U}$ coming from the natural isomorphism $U^*\oplus U\simeq U\oplus U^*$, it is easy to see that $\mc{D}_{U^*}\simeq\mc{D}_{U}^{\op}$, where \textsuperscript{op} denotes the opposite ring. Since $M$ is a left $\mc{D}_U$-module, it is also a right $\mc{D}_U^{\op}$-module, i.e. it can be identified with a right $\mc{D}_{U^*}$-module. The canonical sheaf $\om_{U^*}$ on the vector space $U^*$ is a free rank one module generated by $\det(U)$. By \cite[Prop.~1.2.12]{hottaetal}, the association $M\mapsto\om_{U^*}^{-1}\oo M=\det(U^*)\oo M$ gives an equivalence between the categories of right $\mc{D}_{U^*}$-modules and left $\mc{D}_{U^*}$-modules.
\end{proof}

Motivated by Lemma~\ref{lem:Fourier}, we define a \defi{Fourier transform relative to $U$}, denoted $\mc{F}_U$, on the Grothendieck group $\G(G)$ of admissible $G$-representations as follows:
\begin{equation}\label{eq:defGFourierU}
\mc{F}_U\left(\sum a_i\cdot M_i\right)=\sum a_i\cdot(\det(U^*)\oo M_i^*).
\end{equation}
The context in which we apply the Fourier transform is as follows: we will have constructions which are functorial in $U$ for certain $\mc{D}_U$-modules $M_U$ which are admissible representations for some group $G$, in such a way that
\[M_U=\bigoplus_i M_i^{\oplus a_i}\rm{ if and only if }M_{U^*}=\bigoplus_i (M^*_i)^{\oplus a_i}.\]
By Lemma~\ref{lem:Fourier}, the Fourier transform of the $\mc{D}_{U^*}$-module $M_{U^*}$ has character equal to $\mc{F}_U(\sum_i a_i\cdot M_i)$. We will slightly imprecisely refer to this as the character of the Fourier transform of $M_U$.

\subsection{A little linear algebra}\label{subsec:linalg}

Consider a finite partially ordered set $\mc{P}$, and let $\mc{A}$ denote the free abelian group with basis $\{\mf{v}_p:p\in\mc{P}\}$. We write $p\succ q$ to indicate that $p$ is strictly larger than $q$ with respect to the partial order, and $p\succeq q$ when we allow equality. Assume that $\mc{F}:\mc{P}\lra\mc{P}$ is an order reversing bijection, i.e. $p\succeq q$ if and only if $\mc{F}(q)\succeq\mc{F}(p)$. By abuse of notation, we also write $\mc{F}:\mc{A}\lra\mc{A}$ for the induced automorphism of $\mc{A}$, given by $\mc{F}(\mf{v}_p)=\mf{v}_{\mc{F}(p)}$. We have the following:

\begin{lemma}\label{lem:uppertriangularFourier}
 Suppose that we have a collection of elements $v_p\in\mc{A}$ for $p\in\mc{P}$, for which there exist relations
 \begin{equation}\label{eq:vtomfv}
  v_p=\mf{v}_p+\sum_{q\succ p} a^p_q\cdot\mf{v}_q,\rm{ for some integers }a^p_q.  
 \end{equation}
 If the automorphism $\mc{F}$ of $\mc{A}$ permutes the elements $v_p$ then $v_p=\mf{v}_p$ for all $p\in\mc{P}$ (and hence all $a^p_q=0$).
\end{lemma}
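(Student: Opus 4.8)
The plan is to exploit a rank/grading function on the poset to turn the "upper-triangular" relations into genuinely upper-triangular ones with respect to a total order, apply $\mathcal{F}$, and compare. First I would choose a function $\rho:\mathcal{P}\to\mathbb{Z}_{\geq 0}$ that is strictly monotone (i.e. $p\succ q\Rightarrow\rho(p)>\rho(q)$); for a finite poset one can just take $\rho(p)$ to be the length of the longest chain in $\mathcal{P}$ with top element $p$. Then~\eqref{eq:vtomfv} says that $v_p$ is $\mathfrak{v}_p$ plus a $\mathbb{Z}$-combination of $\mathfrak{v}_q$ with $\rho(q)>\rho(p)$. Since $\mathcal{F}$ is order-reversing, $\rho\circ\mathcal{F}$ is strictly \emph{anti}-monotone, so after applying $\mathcal{F}$ to~\eqref{eq:vtomfv} we get
\[
\mathcal{F}(v_p)=\mathfrak{v}_{\mathcal{F}(p)}+\sum_{q\succ p}a^p_q\cdot\mathfrak{v}_{\mathcal{F}(q)},
\]
and here every index $\mathcal{F}(q)$ satisfies $\rho(\mathcal{F}(q))<\rho(\mathcal{F}(p))$. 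In other words, $\{\mathcal{F}(v_p)\}$ is, with respect to the \emph{same} basis $\{\mathfrak{v}_p\}$, a "lower-triangular-with-ones-on-the-diagonal" family when indices are ordered by $\rho$.

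Next I would package this as a statement about matrices. Fix any linear extension of the total preorder induced by $\rho$ to a total order on $\mathcal{P}$, say $p_1,\dots,p_N$ with $\rho(p_1)\leq\cdots\leq\rho(p_N)$; with respect to the ordered basis $\mathfrak{v}_{p_1},\dots,\mathfrak{v}_{p_N}$, the matrix $A$ expressing $(v_{p_i})$ in terms of $(\mathfrak{v}_{p_j})$ is unipotent upper-triangular, because $a^{p_i}_{q}\neq 0$ forces $\rho(q)>\rho(p_i)$, hence $q=p_j$ with $j>i$. The hypothesis that $\mathcal{F}$ permutes the $v_p$ means there is a permutation $\tau$ of $\{1,\dots,N\}$ with $\mathcal{F}(v_{p_i})=v_{p_{\tau(i)}}$; moreover $\mathcal{F}$ being a bijection of the basis means $\mathcal{F}(\mathfrak{v}_{p_i})=\mathfrak{v}_{p_{\sigma(i)}}$ for a permutation $\sigma$, and order-reversal of $\mathcal{F}$ together with $\rho(\mathcal{F}(q))<\rho(\mathcal{F}(p))$ whenever $\rho(q)>\rho(p)$ forces $\sigma$ to reverse the $\rho$-order. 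Translating $\mathcal{F}(v_{p_i})=v_{p_{\tau(i)}}$ into matrices yields an identity of the form $P_\sigma A P_\sigma^{-1}=P_{\tau}\,A$ (up to bookkeeping with the permutation matrices $P_\sigma,P_\tau$), where conjugation by the order-reversing permutation $P_\sigma$ turns the unipotent upper-triangular $A$ into a unipotent \emph{lower}-triangular matrix $A'$.

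Finally I would extract the conclusion purely algebraically: $A'=P_\tau A$ where $A'$ is unipotent lower-triangular and $A$ is unipotent upper-triangular. Comparing diagonal entries shows $P_\tau$ has all diagonal entries nonzero, hence $P_\tau=I$, so $A'=A$; a matrix that is simultaneously upper-triangular and lower-triangular is diagonal, and being unipotent it must be the identity. Therefore $v_p=\mathfrak{v}_p$ for all $p$, and all $a^p_q=0$. The one genuinely delicate point — and the step I expect to be the main obstacle — is the careful bookkeeping in the previous paragraph: matching up the two permutations $\sigma$ (how $\mathcal{F}$ acts on the basis) and $\tau$ (how $\mathcal{F}$ permutes the $v_p$), and verifying that the matrix identity it produces really does say "$A$ equals something lower-triangular." Rather than tracking permutation matrices explicitly, a cleaner route is to argue coefficient-wise: write $v_p=\sum_q c_{p,q}\mathfrak{v}_q$ with $c_{p,p}=1$ and $c_{p,q}=0$ unless $q=p$ or $\rho(q)>\rho(p)$; apply $\mathcal{F}$ and use that it permutes the $v_p$ to get, for the index $p'$ with $v_{p'}=\mathcal{F}(v_p)$, the expansion $c_{p',q'}=c_{p,\mathcal{F}^{-1}(q')}$, whose support lies in indices $q'$ with $\rho(q')<\rho(p')$ or $q'=p'$; combined with the original triangularity $c_{p',q'}=0$ unless $\rho(q')>\rho(p')$ or $q'=p'$, this forces $c_{p',q'}=0$ for $q'\neq p'$, i.e. $v_{p'}=\mathfrak{v}_{p'}$, for every $p'$ in the image of the permutation — which is all of $\mathcal{P}$.
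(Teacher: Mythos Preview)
Your approach is essentially the paper's, recast in matrix language: both exploit that the change-of-basis from $(\mf{v}_p)$ to $(v_p)$ is unipotent upper-triangular while the $\mc{F}$-transformed system is unipotent lower-triangular, forcing the matrix to be the identity. The paper argues directly in the poset (deriving $\mc{F}(p)\succeq\sigma(p)$ and then inducting on height); you linearise via a rank function $\rho$ and permutation matrices. The matrix version works once the bookkeeping is done, and your diagonal argument ($\pi(i)\le i$ for all $i$ forces $\pi=\mathrm{id}$) is correct.

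There is, however, a genuine gap in your ``cleaner route'' at the end. You assert that the support of $q'\mapsto c_{p',q'}=c_{p,\mc{F}^{-1}(q')}$ lies in $\{q':\rho(q')<\rho(p')\}\cup\{p'\}$. What actually follows from the triangularity of $(c_{p,q})$ together with $\mc{F}$ being order-reversing is only that this support lies in $\{q':\mc{F}(p)\succeq q'\}\subseteq\{\mc{F}(p)\}\cup\{q':\rho(q')<\rho(\mc{F}(p))\}$. You have silently replaced $\mc{F}(p)$ by $p'$, but $p'$ is defined by $v_{p'}=\mc{F}(v_p)$ and there is no a priori reason these agree. What you \emph{do} get, from $c_{p',p'}=1$ lying in that support, is $\mc{F}(p)\succeq p'$ (equivalently $\rho(p')\le\rho(\mc{F}(p))$); this is precisely the paper's key inequality. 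To upgrade it to equality you still need a global step: either the paper's induction on $\htt(\mc{F}(p))$, or the observation that summing $\rho(p')\le\rho(\mc{F}(p))$ over all $p$ gives equality termwise since both $p\mapsto p'$ and $p\mapsto\mc{F}(p)$ are bijections. Once $\rho(p')=\rho(\mc{F}(p))$ (hence $p'=\mc{F}(p)$, since $\mc{F}(p)\succeq p'$ and $\rho$ is strictly monotone on chains), your intersection-of-supports argument goes through.
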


\begin{proof}
 Write $\mc{F}(v_p)=v_{\s(p)}$ for some permutation $\s:\mc{P}\lra\mc{P}$. Applying $\mc{F}$ to (\ref{eq:vtomfv}) we get
 \[v_{\s(p)}=\mf{v}_{\mc{F}(p)}+\sum_{q\succ p} a^p_q\cdot\mf{v}_{\mc{F}(q)},\]
 which is necessarily a permutation of the relations (\ref{eq:vtomfv}). Since $\mc{F}$ is order-reversing, it follows that $\s(p)=\mc{F}(q)$ for some $q\succeq p$, and if $\s(p)=\mc{F}(p)$ then one also has $v_{\s(p)}=\mf{v}_{\mc{F}(p)}$, i.e. $a^p_q=0$ for all $q\succ p$. We get that $\mc{F}(p)\succeq\s(p)$ for all $p\in\mc{P}$, and the equality $\mc{F}(p)=\s(p)$ implies $v_{\s(p)}=\mf{v}_{\mc{F}(p)}$. An easy induction on the \defi{height} of $\mc{F}(p)$, defined by $\htt(\mc{F}(p))=\#\{q:\mc{F}(p)\succ q\}$, shows that $\mc{F}(p)=\s(p)$ for all $p$, which concludes the proof of the lemma.
\end{proof}

\section{Some limit calculations in the Grothendieck group of admissible representations}\label{sec:lims}

Recall the terminology from Sections~\ref{subsubsec:admissible}--\ref{subsubsec:genlPieri} which we will be using freely throughout this section. In particular recall the notation $\G(G)$ for the Grothendieck group of admissible $G$-representations for some group $G$, and the definition of $p_{k,r}(V)$ from (\ref{eq:defpkr}) (also Lemma~\ref{lem:pkrispfwd}). When $W$ is a vector space, we write $V=W^*$ for its dual. In this section we compute in three cases limits in $\G(G)$ of the type
\begin{equation}\label{eq:limspkrE}
\lim_{r\to\infty} p_{k,r}\oo E, 
\end{equation}
where $(p_{k,r})_{r}$ is a sequence of finite virtual $G$-representations, $E=\det(U)\oo\Sym(U)$ is the (character of the) simple $\D_U$-module supported at the origin (\ref{eq:defE}), where $U$ is a finite dimensional $G$-representation:
\begin{itemize}
 \item $U=\Sym^2 W$, $G=\GL(W)$ (so that $\G(G)=\G(W)$), $p_{k,r}=p_{k,r}(V)$. The limit (\ref{eq:limspkrE}) does not exist if $r$ is arbitrary, but instead we have to consider the cases when $r$ is even resp. odd separately.
 \item $U=W_1\oo W_2$, $G=\GL(W_1)\times\GL(W_2)$ (so that $\G(G)=\G(W_1,W_2)$), $p_{k,r}=p_{k,r}(V_1)\oo p_{k,r}(V_2)$.
 \item $U=\bw^2 W$, $G=\GL(W)$, $p_{k,r}=p_{k,r}(V)$ with $k$ even.
\end{itemize}
As mentioned in the Introduction and explained in Section~\ref{subsec:pfwdDmods}, the limits (\ref{eq:limspkrE}) correspond to Euler characteristic calculations for certain $\D$-module direct images. They are essential to the character calculations in Sections~\ref{sec:symm}--\ref{sec:skew} below. The reader who is not interested in the details of the limit calculations may wish to record the results of Propositions~\ref{prop:limsym},~\ref{prop:limsgeneral}, and~\ref{prop:limskew} below, and skip to Section~\ref{sec:symm}.

\subsection{Symmetric matrices}\label{subsec:limsym}

We let $W$ be a vector space of dimension $n$. For $s=0,\cdots,n$ and $j=1,2,$ we define the elements $\mf{C}^j_s\in\G(W)$ via
\begin{equation}\label{eq:defcjs}
\mf{C}^j_s=\bigoplus_{\ll\in\mc{C}^j(s,n)} S_{\ll}W. 
\end{equation}
where $\mc{C}^j(s,n)$ is defined in (\ref{eq:defmcC^isn}).

\begin{proposition}\label{prop:limsym}
If $E=\det\left(\Sym^2 W\right)\oo\Sym\left(\Sym^2 W\right)$ then for $k=0,\cdots,n$,
\[
 (-1)^{k(n-k)}\cdot\left(\lim_{\substack{r\to\infty \\ r\equiv k+1\ (\opmod\ 2)}} p_{k,r}(V)\oo E\right)=\begin{cases}
 \displaystyle\sum_{\substack{s=n-k \\ s\rm{ even}}}^n {\frac{s-2}{2}\choose\frac{n-k-2}{2}}\cdot \mf{C}_s^2 + \sum_{\substack{s=n-k+1 \\ s\rm{ odd}}}^n {\frac{s-1}{2}\choose\frac{n-k}{2}}\cdot \mf{C}_s^1 & \rm{if }n-k\rm{ even,} \\
 & \\
 \displaystyle\sum_{\substack{s=n-k \\ s\rm{ odd}}}^n {\frac{s-1}{2}\choose\frac{n-k-1}{2}}\cdot \mf{C}_s^1 - \sum_{\substack{s=n-k+1 \\ s\rm{ even}}}^n {\frac{s-2}{2}\choose\frac{n-k-1}{2}}\cdot \mf{C}_s^2 & \rm{if }n-k\rm{ odd.}
\end{cases}
\]
\[
 (-1)^{k(n-k)}\cdot\left(\lim_{\substack{r\to\infty \\ r\equiv k\ (\opmod\ 2)}} p_{k,r}(V)\oo E\right)=\begin{cases}
 \displaystyle\sum_{\substack{s=n-k \\ s\rm{ even}}}^n {\frac{s}{2}\choose\frac{n-k}{2}}\cdot \mf{C}_s^1 & \rm{if }n-k\rm{ even,} \\
 & \\
 \displaystyle\sum_{\substack{s=n-k \\ s\rm{ odd}}}^n {\frac{s-1}{2}\choose\frac{n-k-1}{2}}\cdot \mf{C}_s^2 & \rm{if }n-k\rm{ odd.}
\end{cases}
\]
\end{proposition}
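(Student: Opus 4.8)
\emph{Proof strategy.} The computation is purely representation-theoretic. The plan is to expand $p_{k,r}(V)\oo E$ as an explicit signed sum of Schur functors $S_{\ll}W$, to isolate the terms that survive in the limit $r\to\infty$, and to count the survivors using Lemma~\ref{lem:countpartitions}. First, Cauchy's formula (Littlewood's identity) gives $\det(\Sym^2 W)=S_{((n+1)^n)}W$ and $\Sym(\Sym^2 W)=\bigoplus_{\mu}S_{\mu}W$, the sum over partitions $\mu$ all of whose parts are even; hence
\[E=\bigoplus_{\substack{\ll\in\bb{Z}^n_{\dom}\\ \ll_n\geq n+1,\ \ll_i\equiv n+1\ (\opmod\ 2)\ \forall\, i}}S_{\ll}W.\]
Dualizing the roles of $W$ and $V=W^*$ in (\ref{eq:defpkr}) shows $p_{k,r}(V)=p_{k,-r}(W)$ as elements of the representation ring $R(W)$, so Lemma~\ref{lem:genlPieri} gives $p_{k,r}(V)\oo S_{\ll}W=\sum_{I\in\binom{[n]}{k}}S_{\ll-(r^I)}W$ and therefore
\[p_{k,r}(V)\oo E=\sum_{\ll}\ \sum_{I\in\binom{[n]}{k}}S_{\ll-(r^I)}W,\]
the outer sum being over the same index set of weights $\ll$ as above.

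\emph{Stabilizing each term.} Next I would analyze a single summand $S_{\ll-(r^I)}W$ for $r\gg 0$. By (\ref{eq:Sll+rI=Sll(rI)}) with $r$ replaced by $-r$, one has $S_{\ll-(r^I)}W=(-1)^{|\mu|}S_{\ll(-r,I)}W$, where $\mu$ is the partition attached to $I$ by (\ref{eq:Itomu}). Since $\ll$ is dominant with $\ll_n\geq n+1$, for $r$ large the weight $\ll(-r,I)$ — the concatenation of $\ll^1(I)$ lowered by $r$ with $\ll^2(I)$ — is rearranged into a dominant weight by moving its first $k$ coordinates to the end, which contributes a further sign $(-1)^{k(n-k)}$ (the source of the prefactor $(-1)^{k(n-k)}$ in the statement), the resulting dominant weight having first $n-k$ coordinates equal to $\ll^2(I)$ lowered by $k$ and last $k$ coordinates equal to $\ll^1(I)$ shifted by $n-k-r$. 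Fixing a target dominant weight $\nu$ and equating this weight with $\nu$: for a given $I$, the conditions that $\ll^2(I)$ lowered by $k$ equal $(\nu_1,\dots,\nu_{n-k})$ (the ``stationary block'') and that $\ll^1(I)$ shifted by $n-k-r$ equal $(\nu_{n-k+1},\dots,\nu_n)$ (the ``moving block'', a rigid translate by $n-k-r$) determine $\ll$ uniquely; and the remaining requirements — that $\ll$ be dominant and lie in the index set above, in particular that all $\ll_i\equiv n+1\ (\opmod\ 2)$ — become independent of $r$, within a fixed residue class of $r$ modulo $2$, once $r$ is large. Hence only finitely many $(\ll,I)$ contribute to $S_{\nu}W$, the multiplicity $\scpr{(-1)^{k(n-k)}p_{k,r}(V)\oo E}{S_{\nu}W}$ is eventually constant in each parity class of $r$, and it equals $\sum_{(\ll,I)}(-1)^{|\mu|}$ over this stable set; in particular the limits exist.

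\emph{Evaluating the limit.} Finally I would compute this stabilized signed count. The parity condition $\ll_i\equiv n+1\ (\opmod\ 2)$ forces the two blocks of $\nu$ to carry prescribed and generically distinct parities, and together with the interlacing imposed by the dominance of $\ll$ this both restricts which $\nu$ occur and, for those that do, controls the remaining freedom in the choice of $I$. Just as in the proof of Lemma~\ref{lem:genlPieri}, whenever the two parts of $\nu$ do not interlace rigidly one pairs up the contributing $(\ll,I)$ via an adjacent transposition, which flips the sign $(-1)^{|\mu|}$, so all such terms cancel; the surviving $\nu$ are precisely those belonging to $\mc{C}^1(s,n)$ or $\mc{C}^2(s,n)$, with $s$ determined by where the parity jump between the two blocks lies inside the interlacing pattern. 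For each surviving $\nu$ the remaining admissible choices of $I$ are in bijection with the partitions contained in a rectangle whose rows and columns have prescribed parities, i.e.\ with an element of an appropriate set $P^{h,j}(a,b)$, so Lemma~\ref{lem:countpartitions} converts the count into the binomial coefficients $\binom{s/2}{(n-k)/2}$, $\binom{(s-1)/2}{(n-k-1)/2}$, $\binom{(s-1)/2}{(n-k)/2}$, $\binom{(s-2)/2}{(n-k-2)/2}$, $\binom{(s-2)/2}{(n-k-1)/2}$ occurring in the statement. Splitting into the cases $r\equiv k$ versus $r\equiv k+1$ modulo $2$, then into $n-k$ even versus $n-k$ odd, and keeping track of the residual global sign from $(-1)^{|\mu|}$ — which produces the lone minus sign in front of $\sum\mf{C}^2_s$ in the case ``$n-k$ odd, $r\equiv k+1$'' — yields the four displayed identities.

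\emph{Main obstacle.} The crux is the combinatorial bookkeeping of the last step: verifying that the cancellation leaves exactly the index sets $\mc{C}^j(s,n)$ of (\ref{eq:defmcC^isn}), with the correct $s$ and with both $j=1,2$ genuinely occurring, and matching the cardinalities supplied by Lemma~\ref{lem:countpartitions} to the stated binomials — with the correct overall sign — in each of the four (parity of $r$) $\times$ (parity of $n-k$) cases, including the boundary terms $s=n-k$ and $s=n$. Once the uniform-in-$r$ description of the contributing pairs from the second step is in hand, convergence of the limits is automatic.
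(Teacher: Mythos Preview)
Your strategy is correct and essentially identical to the paper's: expand $E$ via Cauchy, apply the generalized Pieri rule, identify the stable contributions as pairs $(\ll,I)$ satisfying parity constraints, cancel via an involution swapping adjacent indices (the paper's Lemma~\ref{lem:lemPllkj}), and count survivors via a bijection with $P^{h,j}(a,b)$ (the paper's Lemma~\ref{lem:Plljtoparts}). The paper phrases the opening step adjointly---computing $\scpr{S_{\ll}W\oo p_{k,r}(W)}{E}$ rather than expanding $p_{k,r}(V)\oo E$ directly---which sidesteps your explicit $(-1)^{k(n-k)}$ rearrangement, but the content is the same.

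One piece of the bookkeeping you do not flag: the surviving weights naturally land in the finer sets $\mc{C}^{h,j}(s)$ of (\ref{eq:defChj}), not directly in $\mc{C}^1(s,n)$ or $\mc{C}^2(s,n)$. While $\mc{C}^2(s,n)=\mc{C}^{s+1,s}(s)$ is immediate, getting $\mf{C}^1_s$ requires the regrouping $\mf{C}^1_s=\mf{C}^{s+1,s+1}_s+\mf{C}^{s+1,s+1}_{s+1}$ from (\ref{eq:ChjtoCj}) together with the coincidence $|P^{n-k,0}(k-n+s,n-k)|=|P^{n-k,1}(k-n+s+1,n-k)|$, so that two consecutive strata combine with equal coefficients. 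This is squarely within the ``combinatorial bookkeeping'' you already identify as the main obstacle, but it is the one genuinely non-obvious maneuver in that bookkeeping.
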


When $k=0$ the above equalities are easy to verify: $p_{k,r}(V)=\bb{C}$ is the trivial representation, so the left hand side reduces to $E$, regardless of the parity of $r$; the right hand side is either $\mf{C}_n^1$ or $\mf{C}_n^2$, but $E=\mf{C}_n^1=\mf{C}_n^2$. We therefore fix $1\leq k\leq n$ for the rest of this section. We begin with some notation and preliminary results before proving the proposition. For $j\in\bb{Z}/2\bb{Z}$ we let
\begin{equation}\label{eq:defmcCjk}
\begin{aligned}
 \mc{C}^j &=\{\ll\in\bb{Z}^{k}_{\dom}:\ll_{i}\equiv n+1+j\ (\opmod\ 2)\rm{ for }i=1,\cdots,k\}, \\
 \mc{C}^j_{\geq n+1} &=\{\ll\in\bb{Z}^{n-k}_{\dom}:\ll_{i}\equiv n+1+j\ (\opmod\ 2)\rm{ for }i=1,\cdots,n-k,\rm{ and }\ll_{n-k}\geq n+1\}.
\end{aligned}
\end{equation}

With the convention $\ll_0=\infty$, $\ll_{n+1}=-\infty$, we define for $s=0,\cdots,n$, 
\begin{equation}\label{eq:ZsnC}
 \mc{Z}(s)=\{\ll\in\bb{Z}^n_{\dom}:\ll_s\geq s+1\geq\ll_{s+1}\},
\end{equation}
and note that the sets $\mc{Z}(s)$, $s=0,\cdots,n$ form a partition of $\bb{Z}^n_{\dom}$. For $h,j\in\bb{Z}/2\bb{Z}$ we let

\begin{equation}\label{eq:defChj}
 \mc{C}^{h,j}(s)=\left\{\ll\in\mc{Z}(s):\ll_i\overset{(\opmod\ 2)}{\equiv}\begin{cases}
  h, & \rm{for }i=1,\cdots,s, \\
  j, & \rm{for }i=s+1,\cdots,n.
\end{cases}
\right\},\quad
\mf{C}^{h,j}_s=\sum_{\ll\in\mc{C}^{h,j}(s)}S_{\ll}W.
\end{equation}
Comparing with (\ref{eq:defmcC^isn}) we get that $\mc{C}^1(s,n)=\mc{C}^{s+1,s+1}(s)\cup\mc{C}^{s+1,s+1}(s+1)$ and $\mc{C}^2(s,n)=\mc{C}^{s+1,s}(s)$ so
\begin{equation}\label{eq:ChjtoCj}
 \mf{C}^1_s=\mf{C}^{s+1,s+1}_s+\mf{C}^{s+1,s+1}_{s+1},\quad\rm{and}\quad\mf{C}^2_s=\mf{C}^{s+1,s}_s.
\end{equation}

\begin{lemma}\label{lem:ll^iinmcC}
 If $I\in{[n]\choose k}$, $\ll^1(I)\in\mc{C}^{k+1+j}$ and $\ll^2(I)\in\mc{C}^0_{\geq n+1}$ then
 \begin{itemize}
  \item $\ll\in\mc{Z}(s)$ for some $s=n-k,\cdots,n$.
  \item $\{s+1,\cdots,n\}\subset I$.
  \item $\ll_{s+1}\equiv\cdots\equiv\ll_n\equiv j\ (\opmod\ 2)$.
 \end{itemize}
\end{lemma}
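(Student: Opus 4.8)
The plan is to unwind the definitions (\ref{eq:defll12}), (\ref{eq:ZsnC}), (\ref{eq:defmcCjk}) and extract the consequences of the two hypotheses one at a time. Throughout, $\ll\in\bb{Z}^n_{\dom}$ is a dominant weight; this is genuinely needed, since the conclusion $\ll\in\mc{Z}(s)\subset\bb{Z}^n_{\dom}$ asserts in particular that $\ll$ is dominant, which does not follow from the hypotheses alone. Write $I=\{i_1<\cdots<i_k\}$ and $I^c=\{i_{k+1}<\cdots<i_n\}$ as in (\ref{eq:listIIc}), and treat the degenerate case $k=n$ separately (there $I^c=\emptyset$, the second bullet $\{s+1,\dots,n\}\subset[n]=I$ is automatic, $s\geq 0=n-k$, and $\ll^1([n])=\ll$, so the parity condition defining $\mc{C}^{n+1+j}$ directly forces $\ll_i\equiv j\pmod 2$ for all $i$). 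So assume $k<n$.

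The first step is to use $\ll^2(I)\in\mc{C}^0_{\geq n+1}$. Since $\ll^2(I)$ is dominant with $\ll^2(I)_{n-k}\geq n+1$, every entry satisfies $\ll^2(I)_{t-k}\geq n+1$, and unwinding (\ref{eq:defll12}) this says $\ll_{i_t}\geq i_t+1$ for each $i_t\in I^c$; in particular $\ll_{i_n}\geq i_n+1$ for $i_n=\max I^c$. Now the dominance of $\ll$ upgrades this to $\ll_i\geq\ll_{i_n}\geq i_n+1\geq i+1$ for every $i\leq\max I^c$. This is the crux of the argument: it pins down the position of the index $s$ relative to $I$.

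Next, let $s$ be the unique index with $\ll\in\mc{Z}(s)$ (it exists since the $\mc{Z}(t)$ partition $\bb{Z}^n_{\dom}$); concretely $s=\max\{i:\ll_i\geq i+1\}$ with the convention $\ll_0=\infty$, which is well-defined because $\ll_i-i$ is strictly decreasing, so $\{i:\ll_i\geq i+1\}$ is an initial segment $\{0,1,\dots,s\}$. The inequality from the previous step gives $s\geq\max I^c$, and since every index exceeding $\max I^c$ lies in $I$, we get $\{s+1,\dots,n\}\subset\{\max I^c+1,\dots,n\}\subset I$, which is the second bullet. Counting, $n-s=|\{s+1,\dots,n\}|\leq|I|=k$, hence $s\geq n-k$ (and $s\leq n$ trivially), the first bullet. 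Finally, $\{s+1,\dots,n\}$ consists of the $n-s$ largest elements of $[n]$, all lying in $I$, hence of the $n-s$ largest elements of $I$, so $i_t=n-k+t$ for $t=k-n+s+1,\dots,k$. Substituting into the parity condition $\ll^1(I)_t=t+\ll_{i_t}-i_t\equiv n+k+j\pmod 2$ coming from $\ll^1(I)\in\mc{C}^{k+1+j}$ yields $\ll_{i_t}\equiv 2n+j\equiv j\pmod 2$, i.e.\ $\ll_{s+1}\equiv\cdots\equiv\ll_n\equiv j\pmod 2$, the third bullet.

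I do not anticipate a real obstacle; the argument is bookkeeping with (\ref{eq:defll12}) and (\ref{eq:ZsnC}). The one point requiring care is the role of the dominance of $\ll$, which is precisely what converts the single inequality $\ll_{i_n}\geq i_n+1$ into $\ll_i\geq i+1$ for all $i\leq\max I^c$; the bounds on $\ll^2(I)$ on their own are not enough. The edge cases $k=n$ and $s=n$ (where the last two bullets become vacuous) should each be dispatched in a line.
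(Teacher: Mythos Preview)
Your proof is correct and follows essentially the same route as the paper's. The only cosmetic difference is that the paper argues the inequality $\max I^c\leq s$ by contradiction (assuming $s'=\max I^c>s$ and deriving $\ll^2(I)_{n-k}<n+1$), whereas you argue it directly from $\ll^2(I)_{n-k}\geq n+1\Rightarrow\ll_{i_n}\geq i_n+1\Rightarrow i_n\leq s$; these are contrapositives of one another. Your explicit remarks on the dominance hypothesis for $\ll$ and on the degenerate case $k=n$ are points the paper leaves implicit.
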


\begin{proof} Consider the unique $s$ for which $\ll\in\mc{Z}(s)$. Let $s'$ be the maximal element of $I^c$ and assume that $s'>s$. We have (using (\ref{eq:listIIc})) that $i_n=s'$ and therefore $\ll_{s'}\leq\ll_{s+1}\leq s+1$ and
\[\ll^2(I)_{n-k}\overset{(\ref{eq:defll12})}{=}n+\ll_{i_n}-i_n=n+\ll_{s'}-s'\leq n+s+1-s'<n+1,\]
which contradicts $\ll^2(I)\in\mc{C}^0_{\geq n+1}$. It follows that $s'\leq s$ and hence $\{s+1,\cdots,n\}\subset I$, which implies $n-s\leq k$, or $s\geq n-k$. From (\ref{eq:listIIc}) we get
\[i_t=t+n-k \rm{ for }t=k-n+s+1,\cdots,k,\]
which using the fact that $\ll^1(I)\in\mc{C}^{k+1+j}$ yields for $t=k-n+s+1,\cdots,k$
\[n+1+k+1+j\overset{(\opmod\ 2)}{\equiv}\ll^1(I)_t=t+\ll_{i_t}-i_t=t+\ll_{t+n-k}-(t+n-k)=\ll_{t+n-k}+k-n\]
so $\ll_{t+n-k}\equiv j\ (\opmod\ 2)$, concluding the proof of the lemma. 
\end{proof}

\begin{lemma}\label{lem:lemPllkj}
 Assume that $\ll\in\mc{Z}(s)$ and that there exists an index $1\leq i<s$ such that $\ll_i\not\equiv\ll_{i+1}\ (\opmod\ 2)$. For any $j\in\bb{Z}/2\bb{Z}$, consider the collection
\begin{equation}\label{eq:defmcPllkj}
\mc{P}_{\ll}(j)=\left\{I\in{[n]\choose k}:\ll^1(I)\in\mc{C}^{k+1+j},\ll^2(I)\in\mc{C}^0_{\geq n+1}\right\}. 
\end{equation}
We have (using notation (\ref{eq:defsI}))
\begin{equation}\label{eq:sumsgnsI=0}
 \sum_{I\in\mc{P}_{\ll}(j)}\sgn(\s(I))=0.
\end{equation}
\end{lemma}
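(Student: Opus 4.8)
The plan is to deduce \eqref{eq:sumsgnsI=0} from a fixed-point-free, sign-reversing involution on $\mc{P}_{\ll}(j)$, so that the signed count collapses by pairing. Fix once and for all an index $1\le i<s$ with $\ll_i\not\equiv\ll_{i+1}\ (\opmod\ 2)$.

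First I would show that for every $I\in\mc{P}_{\ll}(j)$ exactly one of $i,i+1$ belongs to $I$. If both $i$ and $i+1$ lay in $I$ they would occupy consecutive slots $a,a+1\le k$ in the listing \eqref{eq:listIIc}, and \eqref{eq:defll12} would give $\ll^1(I)_a-\ll^1(I)_{a+1}=\ll_i-\ll_{i+1}$, an odd integer; this contradicts $\ll^1(I)\in\mc{C}^{k+1+j}$, all of whose entries have a common parity. Symmetrically, if both $i$ and $i+1$ lay in $I^c$ they would occupy consecutive slots among $k+1,\dots,n$, and the analogous computation with $\ll^2$ would contradict $\ll^2(I)\in\mc{C}^0_{\geq n+1}$. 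Hence $i$ and $i+1$ always sit on opposite sides of the partition $I\cup I^c$.

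Next, define $\phi$ by exchanging $i$ and $i+1$: if $i\in I$ put $\phi(I)=(I\setminus\{i\})\cup\{i+1\}$, and if $i+1\in I$ put $\phi(I)=(I\setminus\{i+1\})\cup\{i\}$. This is plainly an involution, and by the previous paragraph it has no fixed point. I then need to check that $\phi$ maps $\mc{P}_{\ll}(j)$ into itself. Passing from $I$ to $\phi(I)$ alters exactly one entry of $\ll^1(I)$ and one entry of $\ll^2(I)$, namely the slots previously occupied by the element of $\{i,i+1\}$ that moved out of, resp.\ into, each part; using that $\ll_i-\ll_{i+1}$ is odd one checks each of these entries changes by an even amount, so the parity conditions defining $\mc{C}^{k+1+j}$ and $\mc{C}^0_{\geq n+1}$ persist. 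The only inequality that is not automatically preserved is $\ll^2(\phi(I))_{n-k}\ge n+1$: it could be violated only if the element newly added to the complement becomes its maximum, which happens precisely when the element removed was the old maximum, and in that case $\ll^2(\phi(I))_{n-k}$ equals $n+\ll_i-i$ or $n+\ll_{i+1}-(i+1)$; both are $\ge n+1$ because $\ll\in\mc{Z}(s)$ and $i<s$ force $\ll_i\ge\ll_{i+1}\ge\ll_s\ge s+1\ge i+2$. This boundary check is the only place where the hypothesis $\ll\in\mc{Z}(s)$ — rather than merely the parity defect at $i$ — is used, and I expect the bookkeeping here (tracking which slots of $\ll^1,\ll^2$ move and why) to be the main technical point.

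Finally, for the signs I would use that $\sgn(\s(I))=(-1)^{|\mu|}$, where $\mu$ is the partition attached to $I$ by \eqref{eq:Itomu} (this is part of \eqref{eq:Sll+rI=Sll(rI)}), together with the identity $|\mu|=\sum_{a\in I}a-{k+1\choose 2}$ read off from \eqref{eq:Itomu}. Replacing $i$ by $i+1$ in $I$ (or $i+1$ by $i$) changes $\sum_{a\in I}a$ by exactly $1$, hence flips the parity of $|\mu|$ and therefore $\sgn(\s(\phi(I)))=-\sgn(\s(I))$. Pairing each $I\in\mc{P}_{\ll}(j)$ with $\phi(I)$ then yields \eqref{eq:sumsgnsI=0}.
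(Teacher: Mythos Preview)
Your proposal is correct and follows essentially the same approach as the paper's proof: both construct the sign-reversing involution that swaps $i$ and $i+1$, both verify that exactly one of $i,i+1$ lies in any $I\in\mc{P}_{\ll}(j)$ via the parity constraints on $\ll^1(I)$ and $\ll^2(I)$, and both isolate the inequality $\ll^2_{n-k}\ge n+1$ as the one boundary check requiring the hypothesis $i<s$ and $\ll\in\mc{Z}(s)$. The only cosmetic differences are that the paper phrases the pairing as a bijection between $\{I:i\in I\}$ and $\{I':i+1\in I'\}$ rather than as an involution, and reads off $\sgn(\s(I'))=-\sgn(\s(I))$ directly from the fact that $\s(I')=\s(I)\circ(t_0\ t_1)$ rather than via $|\mu|=\sum_{a\in I}a-\binom{k+1}{2}$.
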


\begin{proof} 
We show that if $I\in\mc{P}_{\ll}(j)$ then exactly one of $i,i+1$ is contained in $I$. Moreover, we show that the assignment $I'=I\setminus\{i\}\cup\{i+1\}$ establishes a bijection between
\begin{equation}\label{eq:corrII'}
\{I\in\mc{P}_{\ll}(j):i\in I\}\quad\rm{and}\quad\{I'\in\mc{P}_{\ll}(j):i+1\in I'\}. 
\end{equation}
Since $\sgn(\s(I'))=-\sgn(\s(I))$, the conclusion (\ref{eq:sumsgnsI=0}) follows.

Assume that $I$ is such that $i,i+1$ are both in $I$, or both in $I^c$. We can then find $t<k$ or $t>k$ such that $i_t=i$ and $i_{t+1}=i+1$. If $t<k$ then $\ll^1(I)_t\not\equiv\ll^1(I)_{t+1}\ (\opmod\ 2)$, contradicting $\ll^1(I)\in\mc{C}^{k+1+j}$. If $t>k$ then $\ll^2(I)_{t-k}\not\equiv\ll^2(I)_{t-k+1}\ (\opmod\ 2)$, contradicting $\ll^2(I)\in\mc{C}^0_{\geq n+1}$.

Choose now a set $I$ with $i\in I$, $i+1\in I^c$, and choose $t_0\leq k$, $t_1\geq k+1$, such that $i_{t_0}=i$, $i_{t_1}=i+1$. If we let $I'=I\setminus\{i\}\cup\{i+1\}$ then $\ll^1(I)_t=\ll^1(I')_t$ for $t\neq t_0$, and $\ll^2(I)_t=\ll^2(I')_t$ for $t\neq t_1-k$. We have
\[\ll^1(I)_{t_0}=t_0+\ll_i-i,\quad \ll^1(I')_{t_0}=t_0+\ll_{i+1}-(i+1),\]
\[\ll^2(I)_{t_1-k}=t_1+\ll_{i+1}-(i+1),\quad \ll^2(I')_{t_1-k}=t_1+\ll_i-i,\]
and since $\ll_i\not\equiv\ll_{i+1}\ (\opmod\ 2)$, we get $\ll^1(I)_t\equiv\ll^1(I')_t\ (\opmod\ 2)$ and $\ll^2(I)_t\equiv\ll^2(I')_t\ (\opmod\ 2)$ for all $t$. Since $\ll^2(I)_{t_1-k}\leq\ll^2(I')_{t_1-k}$, the only way in which the correspondence $I\leftrightarrow I'$ could fail to induce a bijection (\ref{eq:corrII'}) is if for some $I,I'$ we get $t_1=n$ and $\ll^2(I)_{n-k}\leq n<n+1\leq\ll^2(I')_{n-k}$, in which case $\ll^2(I)\not\in\mc{C}^0_{\geq n+1}$, but $\ll^2(I')\in\mc{C}^0_{\geq n+1}$. However, the inequality $\ll^2(I)_{n-k}\leq n$ would imply
\[\ll^2(I)_{t_1-k}=t_1+\ll_{i_{t_1}}-i_{t_1}=n+\ll_{i+1}-(i+1)\leq n\rm{ or equivalently }\ll_{i+1}\leq i+1.\]
Since $i<s$ by hypothesis, we get $\ll_s\leq\ll_{i+1}\leq(i+1)\leq s$, contradicting the fact that $\ll\in\mc{Z}(s)$.
\end{proof}

\begin{lemma}\label{lem:Plljtoparts}
 If $\ll\in\mc{C}^{h,j}(s)$, $s\geq n-k$, then there is a one-to-one correspondence between elements $\mc{P}_{\ll}(j)$ and the set $P^{n-k+j-h,s+1-h}(k-n+s,n-k)$ (defined in (\ref{eq:defPhjab})).
 Moreover, for every $I\in\mc{P}_{\ll}(j)$ we have
 \[\sgn(\s(I))=(-1)^{(n-k)\cdot(k+h)},\]
 and $\mc{P}_{\ll}(j)$ is empty if $h\equiv s\equiv j+1\ (\opmod\ 2)$.
\end{lemma}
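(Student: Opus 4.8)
The plan is to analyze the structure of sets $I \in {[n]\choose k}$ satisfying the two membership conditions $\ll^1(I)\in\mc{C}^{k+1+j}$ and $\ll^2(I)\in\mc{C}^0_{\geq n+1}$, given that $\ll\in\mc{C}^{h,j}(s)$ has a completely controlled parity pattern (entries $\equiv h$ in positions $1,\dots,s$ and $\equiv j$ in positions $s+1,\dots,n$). First I would invoke Lemma~\ref{lem:ll^iinmcC}: since $\ll\in\mc{C}^{h,j}(s)\subset\mc{Z}(s)$ and $s\geq n-k$, that lemma already tells us $\{s+1,\dots,n\}\subset I$ and forces $j$ to be the common parity of $\ll_{s+1},\dots,\ll_n$ (which is automatic here). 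So the only freedom in choosing $I$ lies in deciding which $k-(n-s)=k-n+s$ elements of $\{1,\dots,s\}$ to include. Write $I\cap[s]=\{i_1<\cdots<i_{k-n+s}\}$ and $I^c = \{i_{k+1}<\cdots<i_n\}\subset[s]$ with $|I^c|=n-k$.

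Next I would translate the two parity constraints into parity constraints on a single partition. Using the correspondence between subsets of $[s]$ and partitions from~\eqref{eq:Itomu}, the choice of $I\cap[s]$ inside $[s]$ corresponds bijectively to a partition $\mu\in P(k-n+s, n-k)$ (with $k-n+s$ parts each $\le n-k$), and $I^c\subset[s]$ corresponds via~\eqref{eq:Icfrommu'} to the conjugate $\mu'$. The condition $\ll^1(I)_t = t + \ll_{i_t} - i_t \equiv n+1+k+j\pmod 2$ for the indices $t$ with $i_t\in[s]$ becomes, since $\ll_{i_t}\equiv h$, a condition of the form $i_t - t \equiv$ (fixed constant)$\pmod 2$, i.e.\ a parity condition on the parts $\mu_i$; similarly the condition $\ll^2(I)_{t-k}\equiv n+1\pmod 2$ for indices with $i_t\in[s]$ (recalling $\ll^2(I)_{n-k},\dots$ for the part landing in $[s]$, while the tail indices $s+1,\dots,n$ already satisfy it by Lemma~\ref{lem:ll^iinmcC}'s parity conclusion and a direct check) becomes a parity condition on $\mu'_i$. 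Carefully tracking the arithmetic — keeping in mind the shift by $n-k$ coming from the forced tail $\{s+1,\dots,n\}\subset I$ and the bookkeeping in~\eqref{eq:defll12} — should yield exactly that $\mu\in P^{n-k+j-h,\,s+1-h}(k-n+s,n-k)$, giving the claimed bijection. The emptiness assertion when $h\equiv s\equiv j+1\pmod 2$ then falls out of Lemma~\ref{lem:countpartitions}: in that case $n-k+j-h$ and $s+1-h$ have parities making $P^{\cdot,\cdot}(k-n+s,n-k)$ empty (one checks $a=k-n+s$ is odd while the first superscript is odd, or the analogous parity clash), so no valid $I$ exists.

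For the sign statement, I would compute $\sgn(\s(I))$ directly from~\eqref{eq:defsI}: $\s(I)$ is the permutation listing $I$ then $I^c$ in increasing order. Using~\eqref{eq:Sll+rI=Sll(rI)}, $\sgn(\s(I)) = (-1)^{|\nu|}$ where $\nu\in P(k,n-k)$ is the partition attached to the full set $I\in{[n]\choose k}$. Since $\{s+1,\dots,n\}\subset I$, the "top" $n-s$ rows of $\nu$ are full (equal to $n-k$) and the remaining rows come from the partition $\mu$ attached to $I\cap[s]\subset[s]$; thus $|\nu| = (n-s)(n-k) + |\mu|$, hence $\sgn(\s(I)) = (-1)^{(n-s)(n-k)}\cdot(-1)^{|\mu|}$. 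It then remains to observe that $(-1)^{|\mu|}$ is constant over all $\mu\in P^{n-k+j-h,s+1-h}(k-n+s,n-k)$ and to evaluate it: from the parity conditions on the parts $\mu_i$, $|\mu|\equiv (k-n+s)\cdot(n-k+j-h)\pmod 2$, and a short parity manipulation (using that the set is nonempty, so we are in the complementary parity case to the emptiness one) reduces $(n-s)(n-k) + (k-n+s)(n-k+j-h)$ modulo $2$ to $(n-k)(k+h)$, matching the claim $\sgn(\s(I)) = (-1)^{(n-k)(k+h)}$.

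The main obstacle I anticipate is the parity bookkeeping in the second step: there are several interacting shifts (the $+t$, $-i_t$ in~\eqref{eq:defll12}, the forced inclusion of the tail which renumbers the indices $i_t$, and the distinction between the single $\ll^2(I)$-entry that lands in $[s]$ versus the $n-s$ tail entries), and it is easy to be off by one in the congruences, which would produce the wrong superscripts $n-k+j-h$ and $s+1-h$ or the wrong sign exponent. I would handle this by first doing the case $I=\{k-n+s+1,\dots,k\}\cup\{s+1,\dots,n\}$ (i.e.\ $\mu$ = the empty or rectangular extreme) as a sanity anchor, verifying the parity pattern of $\ll^1,\ll^2$ against $\mc{C}^{k+1+j}$ and $\mc{C}^0_{\geq n+1}$ explicitly there, and then tracking how a single box move $I\leftrightarrow I\setminus\{i\}\cup\{i+1\}$ inside $[s]$ changes the relevant congruences — exactly the kind of local move already exploited in Lemma~\ref{lem:lemPllkj} — to propagate the identification to all of $\mc{P}_{\ll}(j)$.
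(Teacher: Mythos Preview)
Your approach is essentially the paper's: invoke Lemma~\ref{lem:ll^iinmcC} to force $\{s+1,\dots,n\}\subset I$, parametrize the remaining choice $I\cap[s]$ by a partition $\ol{\mu}\in P(k-n+s,n-k)$, and translate the two parity constraints into the superscripts $n-k+j-h$ and $s+1-h$. One small slip: since $\{s+1,\dots,n\}\subset I$, the complement $I^c$ lies entirely in $[s]$, so it is $\ll^1(I)$ (not $\ll^2(I)$) that splits into a ``$[s]$-part'' and a ``tail part''; your parenthetical has these reversed, though this does not affect the argument.

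Where your route is slightly less efficient is the sign. You compute $|\ol{\mu}|$ via its rows, which introduces $j$ and forces you to invoke nonemptiness to reconcile the parities. The paper instead works with the full partition $\mu\in P(k,n-k)$ attached to $I$ (your $\nu$) and reads off $|\mu|$ via its \emph{columns}: every $\mu'_i\equiv n+1-h\pmod 2$ (this is exactly the $\ll^2(I)$ parity condition), and there are $n-k$ columns, so $|\mu|\equiv (n-k)(n+1-h)\equiv (n-k)(k+h)\pmod 2$, the last step because $(n-k)(n-k+1)$ is even. This gives the sign in one line, uniformly and without reference to $j$ or to whether $\mc{P}_{\ll}(j)$ is empty.
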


\begin{proof} The correspondence between sets $I\in{[n]\choose k}$ (resp. their complements $I^c$) and partitions $\mu\in P(k,n-k)$ (resp. their conjugates $\mu'$) is given in (\ref{eq:Itomu}) (resp. (\ref{eq:Icfrommu'})). If $I\in\mc{P}_{\ll}(j)$ then it follows from Lemma~\ref{lem:ll^iinmcC} that $s+1,\cdots,n$ are the largest elements of $I$, namely $i_{k-n+s+1},\cdots,i_k$, so $\mu_1=\cdots=\mu_{n-s}=n-k$. The set $I$ is then determined by $\ol{\mu}=(\mu_{n-s+1},\cdots,\mu_k)\in P(k-n+s,n-k)$. Since $\ll\in\mc{C}^{h,j}(s)$, the condition $\ll^1(I)\in\mc{C}^{k+1+j}$ is equivalent to $\ol{\mu}_i\equiv n-k+j-h\ (\opmod\ 2)$. The condition $\ll^2(I)\in\mc{C}^0_{\geq n+1}$ is equivalent to $\mu'_i\equiv n+1-h\ (\opmod\ 2)$, which in turn is equivalent to $\ol{\mu}'_i\equiv s+1-h\ (\opmod\ 2)$. It follows that $I\in\mc{P}_{\ll}(j)$ if and only if $\ol{\mu}\in P^{n-k+j-h,s+1-h}(k-n+s,n-k)$, which establishes the desired bijection. Moreover
\[\sgn(\s(I))=(-1)^{|\mu|}=(-1)^{|\mu'|}=(-1)^{(n-k)\cdot(n+1-h)}=(-1)^{(n-k)\cdot(k+h)},\]
where the last equality follows from the fact that $(n-k)\cdot(n+1-k)$ is even. If $h\equiv s\equiv j+1\ (\opmod\ 2)$ then $|\mc{P}_{\ll}(j)|=|P^{n-k+1,1}(k-n+s,n-k)|=0$ by Lemma~\ref{lem:countpartitions}.
\end{proof}

\begin{proof}[Proof of Proposition~\ref{prop:limsym}]
 We have
\[\scpr{S_{\ll}W}{p_{k,r}(V)\oo E}=\scpr{S_{\ll}W\oo p_{k,r}(W)}{E}\overset{(\ref{eq:genlPieri}),(\ref{eq:Sll+rI=Sll(rI)})}{=}\sum_{I\in{[n]\choose k}}\sgn(\s(I))\cdot\scpr{S_{\ll(r,I)}W}{E}.\]
Since $\det(\Sym^2 W)=\det(W)^{\oo(n+1)}=S_{(n+1)^n}W$, we get using Cauchy's formula \cite[Prop.~2.3.8]{weyman} that
\[E=\det\left(\Sym^2 W\right)\oo\Sym\left(\Sym^2 W\right)=\bigoplus_{\substack{\ll\in\bb{Z}^n_{\dom},\ll_n\geq n+1 \\ \ll_i\equiv n+1\ (\opmod\ 2)}}S_{\ll}W.\]
Using notation (\ref{eq:defllrI}--\ref{eq:defll12}) and (\ref{eq:defmcCjk}) we obtain for $r\gg 0$
\[
 \scpr{S_{\ll(r,I)}W}{E} = \begin{cases}
 1, & \rm{if }\ll^1(I)\in\mc{C}^r\rm{ and }\ll^2(I)\in\mc{C}^0_{\geq n+1}, \\
 0, & \rm{otherwise}.
 \end{cases}
\]
It follows (using notation (\ref{eq:defmcPllkj})) that for $j\in\bb{Z}/2\bb{Z}$
\begin{equation}\label{eq:limpkrvsym1}
\lim_{\substack{r\to\infty \\ r\equiv k+1+j\ (\opmod\ 2)}} p_{k,r}(V)\oo E=\sum_{\ll\in\bb{Z}^n_{\dom}}\left(\sum_{I\in\mc{P}_{\ll}(j)}\sgn(\s(I))\right)\cdot S_{\ll}W, 
\end{equation}
and by Lemmas~\ref{lem:ll^iinmcC} and~\ref{lem:lemPllkj} we only need to consider $\ll\in\mc{Z}(s)$ for $s\geq n-k$ such that (for some $h\in\bb{Z}/2\bb{Z}$)
\[\ll_1\equiv\cdots\equiv\ll_s\equiv h\ (\opmod\ 2)\quad\rm{ and }\quad\ll_{s+1}\equiv\cdots\equiv\ll_n\equiv j\ (\opmod\ 2),\]
i.e. $\ll\in\mc{C}^{h,j}(s)$. Multiplying both sides of (\ref{eq:limpkrvsym1}) by $(-1)^{k\cdot(n-k)}$ and using Lemma~\ref{lem:Plljtoparts} we get
\[(-1)^{k\cdot(n-k)}\cdot\left(\lim_{\substack{r\to\infty \\ r\overset{(\opmod\ 2)}{\equiv} k+1+j}} p_{k,r}(V)\oo E\right)=\sum_{\substack{n-k\leq s\leq n \\ h=j,j+1}}(-1)^{(n-k)\cdot h}\cdot|P^{n-k+j-h,s+1-h}(k-n+s,n-k)|\cdot\mf{C}^{h,j}_s.\]
We separate the contributions of the right hand side according to two cases:

\ul{Terms with $h=j+1$:} By Lemma~\ref{lem:Plljtoparts} we can consider only the terms with $s\equiv j\ (\opmod\ 2)$, in which case we get from (\ref{eq:ChjtoCj}) that $\mf{C}^{h,j}_s=\mf{C}^{s+1,s}_s=\mf{C}^2_s$. We have
\[
 |P^{n-k+j-h,s+1-h}(k-n+s,n-k)|=|P^{n-k+1,0}(k-n+s,n-k)|\overset{\rm{Lemma }\ref{lem:countpartitions}}{=}\begin{cases}
\displaystyle{\lfloor\frac{s-1}{2}\rfloor \choose \frac{n-k-1}{2}} & n-k\rm{ odd}, \\
\\
\displaystyle{\frac{s-2}{2} \choose \frac{n-k-2}{2}} & n-k\rm{ and }s\rm{ even}, \\
\\
0 & \rm{otherwise.}
\end{cases}
\]
Comparing the coefficient of $\mf{C}^2_s$ in Proposition~\ref{prop:limsym} with $(-1)^{(n-k)\cdot h}\cdot|P^{n-k+j-h,s+1-h}(k-n+s,n-k)|$ in each of the cases $j=0,1$, and $(n-k)$ even and odd, we see that they agree.

\ul{Terms with $h=j$:} The terms with $s\equiv j+1\ (\opmod\ 2)$ contribute $\mf{C}^{h,j}_s=\mf{C}^{s+1,s+1}_s$ with coefficient $(-1)^{(n-k)\cdot h}\cdot|P^{n-k,0}(k-n+s,n-k)|$. The terms with $s\equiv j\ (\opmod\ 2)$ contribute $\mf{C}^{h,j}_s=\mf{C}^{s,s}_s$ with coefficient $(-1)^{(n-k)\cdot h}\cdot|P^{n-k,1}(k-n+s,n-k)|$. For $s=n-k$ we get $|P^{n-k,1}(k-n+s,n-k)|=0$ so $\mf{C}^{s,s}_s$ only appears for $s>n-k$. Observing that $|P^{n-k,0}(k-n+s,n-k)|=|P^{n-k,1}(k-n+s+1,n-k)|$ for $s\geq n-k$, and using $\mf{C}^1_s=\mf{C}^{s+1,s+1}_s+\mf{C}^{s+1,s+1}_{s+1}$ in (\ref{eq:ChjtoCj}), we conclude that the terms with $h=j$ contribute \[\sum_{s\equiv j+1\ (\opmod\ 2)}(-1)^{(n-k)\cdot h}\cdot|P^{n-k,0}(k-n+s,n-k)|\cdot\mf{C}^1_s,\]
where
\[|P^{n-k,0}(k-n+s,n-k)|\overset{\rm{Lemma }\ref{lem:countpartitions}}{=}\begin{cases}
\displaystyle{\lfloor\frac{s}{2}\rfloor \choose \frac{n-k}{2}} & n-k\rm{ even}, \\
\\
\displaystyle{\frac{s-1}{2} \choose \frac{n-k-1}{2}} & n-k\rm{ and }s\rm{ odd}, \\
\\
0 & \rm{otherwise.}
\end{cases}\]
Comparing with the coefficient of $\mf{C}^1_s$ in Proposition~\ref{prop:limsym} we conclude the proof of the proposition.
\end{proof}

\subsection{General matrices}\label{subsec:limgeneral}

For positive integers $m\geq n$ and for $s=0,\cdots,n$, we let
\begin{equation}\label{eq:defAsn}
 \mc{A}(s;m,n)=\{\ll\in\bb{Z}^n_{\dom}:\ll_s\geq s+m-n,\ll_{s+1}\leq s\}.
\end{equation}
If $\ll\in\mc{A}(s;m,n)$ then we define a dominant weight $\ll(s)\in\bb{Z}^m_{\dom}$ by
\begin{equation}\label{eq:deflls}
 \ll(s) = (\ll_1-(m-n),\cdots,\ll_s-(m-n),\underbrace{s,\cdots,s}_{m-n},\ll_{s+1},\cdots,\ll_n)
\end{equation}
For vector spaces $W_1$, $W_2$, with $\dim(W_1)=m$, $\dim(W_2)=n$, and $s=0,\cdots,n$ we define $\mf{A}_s\in\G(W_1,W_2)$~by
\begin{equation}\label{eq:defas}
\mf{A}_s=\bigoplus_{\ll\in\mc{A}(s;m,n)} S_{\ll(s)}W_1\oo S_{\ll}W_2. 
\end{equation}

\begin{proposition}\label{prop:limsgeneral}
 We write $W=W_1\oo W_2$. If $E=\det(W)\oo\Sym(W)$ then for $k=0,\cdots,n,$
\[(-1)^{k\cdot(m-n)}\cdot\left(\lim_{r\to\infty} p_{k,r}(V_1)\oo p_{k,r}(V_2)\oo E\right)=\sum_{s=n-k}^n (-1)^{(m-n)\cdot(n-k-s)}\cdot {s\choose s-n+k}\cdot \mf{A}_s.\]
\end{proposition}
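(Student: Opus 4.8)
The plan is to adapt the strategy of the proof of Proposition~\ref{prop:limsym}, the essential new feature being that the Cauchy decomposition of $E=\det(W)\oo\Sym(W)$ is now indexed by a \emph{single} partition shared between the $\GL(W_1)$- and $\GL(W_2)$-factors, so the combinatorics involves a pair of index sets $(I_1,I_2)\in{[m]\choose k}\times{[n]\choose k}$ rather than a single one. I would first fix dominant weights $\d\in\bb{Z}^m_{\dom}$, $\ll\in\bb{Z}^n_{\dom}$ and expand the relevant multiplicity: applying Lemma~\ref{lem:genlPieri} in each tensor factor together with (\ref{eq:Sll+rI=Sll(rI)}),
\[
\scpr{S_{\d}W_1\oo S_{\ll}W_2}{p_{k,r}(V_1)\oo p_{k,r}(V_2)\oo E}=\sum_{I_1\in{[m]\choose k}}\sum_{I_2\in{[n]\choose k}}\sgn(\s(I_1))\cdot\sgn(\s(I_2))\cdot\scpr{S_{\d(r,I_1)}W_1\oo S_{\ll(r,I_2)}W_2}{E}.
\]
By Cauchy's formula \cite[Cor.~2.3.3]{weyman}, $E=\det(W_1)^{\oo n}\oo\det(W_2)^{\oo m}\oo\Sym(W_1\oo W_2)=\bigoplus_{\nu}S_{\nu+(n^m)}W_1\oo S_{\nu+(m^n)}W_2$, the sum over partitions $\nu$ with at most $n$ parts, so the inner pairing is $1$ precisely when $\d(r,I_1)_i=n$ for $i=n+1,\dots,m$ and $\d(r,I_1)_i=\ll(r,I_2)_i-(m-n)\geq n$ for $i=1,\dots,n$, and $0$ otherwise. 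Separating the ``large'' block $i\leq k$ from the ``small'' block $i>k$ and letting $r\to\infty$, these conditions become $\d^1(I_1)_t=\ll^1(I_2)_t-(m-n)$ for $t\leq k$; $\d^2(I_1)_t=\ll^2(I_2)_t-(m-n)$ for $t\leq n-k$; $\d^2(I_1)_t=n$ for $n-k<t\leq m-k$; and $\ll^2(I_2)_t\geq m$ for $t\leq n-k$.

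Next I would establish the analogue of Lemma~\ref{lem:ll^iinmcC}. The inequality $\ll^2(I_2)_{n-k}\geq m$, combined with strict monotonicity of $j\mapsto\ll_j-j$, forces $\max(I_2^c)\leq s$ for a suitable cut $s$, hence $\{s+1,\dots,n\}\subset I_2$; the equations $\d^2(I_1)_t=n$ for $t>n-k$ force $\d$ to have $m-n$ consecutive equal entries (namely $\d_{s+1}=\dots=\d_{s+m-n}=s$), and together with dominance of $\d$ and the remaining conditions they pin down $\d=\ll(s)$ and $\ll\in\mc{A}(s;m,n)$ (cf.\ (\ref{eq:defAsn}), (\ref{eq:deflls})) for a \emph{unique} $s$ with $n-k\leq s\leq n$, as well as $\{s+m-n+1,\dots,m\}\subset I_1$ and $I_1\cap\{s+1,\dots,s+m-n\}=\emptyset$. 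Conversely, given $\ll\in\mc{A}(s;m,n)$ and $\d=\ll(s)$, strict monotonicity of $j\mapsto\ll_j-j$ on $[s]$ forces $I_1\cap[s]=I_2\cap[s]$, so the contributing pair $(I_1,I_2)$ is determined by $I_2$, while $I_2\cap[s]$ ranges freely over all $(k-n+s)$-subsets of $[s]$. By the correspondence (\ref{eq:Itomu}) the partitions attached to $I_2$ and to $I_1$ are $((n-k)^{n-s},\ol{\mu})$ and $((m-k)^{n-s},\ol{\mu})$ for one common $\ol{\mu}$, with $\ol{\mu}$ ranging over $P(k-n+s,n-k)$, a set of size ${s\choose k-n+s}={s\choose n-k}$.

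Assembling: the displays above give $\sgn(\s(I_1))\cdot\sgn(\s(I_2))=(-1)^{(n-s)(m-k)+(n-s)(n-k)}=(-1)^{(m-n)(n-s)}$, the two copies of $(-1)^{|\ol{\mu}|}$ cancelling and $m+n\equiv m-n\ (\opmod\ 2)$; this is independent of $\ol{\mu}$, so $\scpr{S_{\ll(s)}W_1\oo S_{\ll}W_2}{\lim_r p_{k,r}(V_1)\oo p_{k,r}(V_2)\oo E}=(-1)^{(m-n)(n-s)}\cdot{s\choose n-k}$ when $\ll\in\mc{A}(s;m,n)$ for some $n-k\leq s\leq n$, and is $0$ for all other dominant pairs $(\d,\ll)$. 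Summing over $\ll$ using (\ref{eq:defas}), multiplying by $(-1)^{k(m-n)}$, and using $(-1)^{k(m-n)+(m-n)(n-s)}=(-1)^{(m-n)(n-k-s)}$ yields the stated identity; the case $k=0$ reduces to the trivial fact $E=\mf{A}_n$.

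The hard part will be the structural step: tracking the two index sets $I_1,I_2$ simultaneously and showing that a contributing pair forces $\d=\ll(s)$ with $\ll\in\mc{A}(s;m,n)$. The block of $m-n$ repeated entries in $\ll(s)$ — which surfaces precisely through the equations $\d^2(I_1)_t=n$ for $t>n-k$ — is responsible both for the ``gaps'' (when $m>n$ not every dominant $\ll$ occurs, in contrast to the square case) and for the sign $(-1)^{(m-n)(n-s)}$, and threading it through the bookkeeping of $I_1$ and $I_2$ is where the real work lies. I expect that, unlike in Proposition~\ref{prop:limsym}, no separate cancellation lemma in the spirit of Lemma~\ref{lem:lemPllkj} is needed: there are no parity constraints to enforce, and the structural lemma already shows that the non-contributing pairs $(\d,\ll)$ admit no candidate $(I_1,I_2)$ at all.
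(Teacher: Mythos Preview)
Your proposal is correct and follows essentially the same approach as the paper. The only difference is presentational: the paper first shifts by $(n^m)$ and $(m^n)$ so as to pair against $\Sym(W)$ rather than $E$, and then packages the structural step as a ``bar modification'' $\d\mapsto\ol{\d}$, $I_1\mapsto\ol{I}$ that deletes the block of $m-n$ repeated entries and reduces the matching condition to the square-case identity $\ol{\d}=\ll$, $\ol{I}=I_2$; you instead keep $E$ and analyze the two index sets directly, but the combinatorics (unique $s$, $\d=\ll(s)$, free choice of a $(k-n+s)$-subset of $[s]$, and the sign $(-1)^{(m-n)(n-s)}$) is identical.
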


\begin{proof}[Proof of Proposition~\ref{prop:limsgeneral}]
 Consider dominant weights $\tl{\d}\in\bb{Z}^m_{\dom}$ and $\tl{\ll}\in\bb{Z}^n_{\dom}$, and let 
\begin{equation}\label{eq:tldtlll}
\d=\tl{\d}-(n^m),\quad\ll=\tl{\ll}-(m^n).
\end{equation}
We obtain using (\ref{eq:Sll+rI=Sll(rI)}), (\ref{eq:genlPieri}), and easy manipulations that $\scpr{S_{\tl{\d}}W_1\oo S_{\tl{\ll}}W_2}{p_{k,r}(V_1)\oo p_{k,r}(V_2)\oo E}$ equals
\[\sum_{I\in{[m]\choose k},\ J\in{[n]\choose k}}\sgn(\s(I))\cdot\sgn(\s(J))\cdot\scpr{S_{\d(r,I)}W_1\oo S_{\ll(r,J)}W_2}{\Sym(W)}.\]
Using (\ref{eq:defll12}) and writing $\mu | (0^{m-n})$ for the sequence obtained by appending $m-n$ zeros to $\mu$, we get for~$r\gg 0$
\[
 \scpr{S_{\d(r,I)}W_1\oo S_{\ll(r,J)}W_2}{\Sym(W)} = \begin{cases}
 1, & \rm{if }\d^1(I)=\ll^1(J),\ \d^2(I)=\ll^2(J) | (0^{m-n}),\rm{ and }\d^2(I)\in\bb{Z}^{m-k}_{\geq 0}, \\
 0, & \rm{otherwise}.
 \end{cases}
\]
Let $u\in\{0,\cdots,m\}$ be the unique index such that $\d_u\geq u-m\geq\d_{u+1}$. The condition $\d^2(I)\in\bb{Z}^{m-k}_{\geq 0}$ is equivalent to the inclusion $\{u+1,\cdots,m\}\subset I$, which implies $u\geq m-k$. When $m>n$, the last $m-n$ entries of $\d^2(I)$ being $0$ forces $\d_u=\d_{u-1}=\cdots=\d_{u-m+n+1}=u-m$, and all the elements $u,u-1,\cdots,u-m+n+1$ to be contained in $I^c=[m]\setminus I$. We modify $\d$ and $I$ as follows: we consider $\ol{\d}\in\bb{Z}^n_{\dom}$ and $\ol{I}\in{[n]\choose k}$ defined by 
\[\ol{\d}=(\d_1,\cdots,\d_{u-m+n},\d_{u+1}-(m-n),\cdots,\d_m-(m-n)),\]
\[\quad\ol{I}=\{i_1,\cdots,i_{k-m+u},u+1-(m-n),u+2-(m-n),\cdots,n\},\]
so that $\ol{I}^c=[n]\setminus\ol{I}=I^c\setminus\{u,u-1,\cdots,u-m+n+1\}$. The conditions $\d^1(I)=\ll^1(J)$ and $\d^2(I)=\ll^2(J) | (0^{m-n})$ are then equivalent to $\ol{\d}^1(\ol{I})=\ll^1(J)$ and $\ol{\d}^2(\ol{I})=\ll^2(J)$. Since both $\ll,\ol{\d}$ are dominant weights, these equalities can only hold for $\ol{\d}=\ll$ and $\ol{I}=J$. Note that the freedom in choosing $I$ (or $\ol{I}=J$) is in the choice of an increasing sequence $i_1<\cdots<i_{k-m+u}$ inside $\{1,\cdots,u\}$, i.e. there are ${u-m+n\choose k-m+u}$ choices for $I$ once we fix $\d$. Writing $s=u-m+n$ we get
\[s\geq(m-k)-m+n=n-k,\]
\[\ll_s=\d_s=\d_{u-m+n}\geq\d_u\geq u-m=s-n,\]
\[\ll_{s+1}=\d_{u+1}-(m-n)\leq(u-m)-(m-n)=(s-n)-(m-n)=s-m,\]
and moreover
\[\d=(\ll_1,\cdots,\ll_s,\underbrace{s-n,\cdots,s-n}_{m-n},\ll_{s+1}+(m-n),\cdots,\ll_n+(m-n)).\]
It follows (using (\ref{eq:deflls}), (\ref{eq:tldtlll})) that $\tl{\d}=\tl{\ll}(s)$. Since $\sgn(\s(I))=(-1)^{(m-n)\cdot(m-u)}\cdot\sgn(\s(\ol{I}))$, it follows that if $\ol{I}=J$ and $m-u=n-s$ then
\[\sgn(\s(I))\cdot\sgn(\s(J))=(-1)^{(m-n)\cdot(n-s)}.\]
Putting everything together, and using ${u-m+n\choose k-m+u}={s\choose s-n+k}$, we obtain for $r\gg 0$
\[\scpr{S_{\tl{\d}}W_1\oo S_{\tl{\ll}}W_2}{p_{k,r}(V_1)\oo p_{k,r}(V_2)\oo E}=\begin{cases}
 \displaystyle(-1)^{(m-n)\cdot(n-s)}\cdot{s\choose s-n+k} & \rm{if }\tl{\ll}\in\mc{A}(s;m,n)\rm{ and }\tl{\d}=\tl{\ll}(s)\\
 & \rm{for some }s\geq n-k,\\
 0 & \rm{otherwise.}
\end{cases}
\]
Multiplying by $(-1)^{k\cdot(m-n)}$ and taking the limit $r\to\infty$ yields the desired conclusion.
\end{proof}

\subsection{Skew-symmetric matrices}\label{subsec:limskew}

For a positive integer $m$ and for $s=0,\cdots,m$, we let
\begin{equation}\label{eq:defmcBsn}
\begin{aligned}
\mc{B}(s,2m) &= \{\ll\in\bb{Z}^{2m}_{\dom}:\ll_{2s}\geq(2s-1),\ll_{2s+1}\leq 2s,\ll_{2i-1}=\ll_{2i}\rm{ for all }i\}, \\
\mc{B}(s,2m+1) &= \{\ll\in\bb{Z}^{2m+1}_{\dom}:\ll_{2s+1}=2s,\ll_{2i-1}=\ll_{2i}\rm{ for }i\leq s,\ll_{2i}=\ll_{2i+1}\rm{ for }i>s\}.
\end{aligned}
\end{equation}
For a vector space $W$ with $\dim(W)=n$, and for $s=0,\cdots,m=\lfloor n/2\rfloor$ we define $\mf{B}_s\in\G(W)$ via
\begin{equation}\label{eq:defbs}
\mf{B}_s=\bigoplus_{\ll\in\mc{B}(s,n)} S_{\ll}W. 
\end{equation}

\begin{proposition}\label{prop:limskew}
 If $E=\det\left(\bw^2 W\right)\oo\Sym\left(\bw^2 W\right)$ and $k=0,\cdots,m=\lfloor n/2\rfloor$, then
\[\lim_{r\to\infty} p_{2k,r}(V)\oo E=\sum_{s=m-k}^m {s\choose m-k}\cdot \mf{B}_s.\]
\end{proposition}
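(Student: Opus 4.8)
The plan is to mimic the proof of Proposition~\ref{prop:limsgeneral}: as in the case of general matrices, the decomposition of $E$ is governed by a rigid equality-type constraint coming from a plethysm formula, so (unlike in the symmetric case, where the constraint was a congruence) no cancellation lemma of the type of Lemma~\ref{lem:lemPllkj} will be needed.

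First I would reduce, exactly as at the start of the proof of Proposition~\ref{prop:limsym}, to a sum over subsets: for $\ll\in\bb{Z}^n_{\dom}$,
\[\scpr{S_{\ll}W}{p_{2k,r}(V)\oo E}=\scpr{S_{\ll}W\oo p_{2k,r}(W)}{E}=\sum_{I\in{[n]\choose 2k}}\sgn(\s(I))\cdot\scpr{S_{\ll(r,I)}W}{E},\]
using Lemma~\ref{lem:genlPieri}, (\ref{eq:Sll+rI=Sll(rI)}), and the notation (\ref{eq:defllrI})--(\ref{eq:defll12}) with $k$ replaced by $2k$. The character of $E$ comes from $\det(\bw^2 W)=\det(W)^{\oo(n-1)}=S_{((n-1)^n)}W$ together with the plethysm $\Sym(\bw^2 W)=\bigoplus S_{\ll}W$, the sum being over partitions $\ll$ whose conjugate has all parts even --- equivalently $\ll_{2i-1}=\ll_{2i}$ for $i=1,\cdots,\lfloor n/2\rfloor$, with the additional requirement $\ll_n=0$ when $n$ is odd. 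Thus $E=\bigoplus_{\mu}S_{\mu}W$, where $\mu$ runs over the dominant weights with $\mu-((n-1)^n)$ of the above shape. For $r\gg 0$ one checks that $\ll(r,I)$ is dominant, so $\scpr{S_{\ll(r,I)}W}{E}\in\{0,1\}$; it equals $1$ precisely when $\ll^1(I)_{2a-1}=\ll^1(I)_{2a}$ for all $a$, and $\ll^2(I)$ has the analogous ``equal consecutive pairs'' shape, subject to the boundary condition $\ll^2(I)_{n-2k}\geq n-1$ when $n$ is even, resp.\ $\ll^2(I)_{n-2k}=n-1$ when $n$ is odd.

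Next I would use the rigidity of these conditions. Since $\ll$ is dominant, an equality $\ll^1(I)_{2a-1}=\ll^1(I)_{2a}$ forces $i_{2a}=i_{2a-1}+1$ and $\ll_{i_{2a-1}}=\ll_{i_{2a}}$, and likewise for $\ll^2(I)$. Because an interval of even length admits a unique partition into pairs of consecutive integers, a nonzero contribution forces both $I$ and $I^c$ to be unions of ``standard'' blocks of consecutive indices on which $\ll$ is constant --- plus one leftover index in $I^c$ (when $n$ is odd), at which $\ll_{i_n}=i_n-1$. Following the position of this leftover index and comparing with (\ref{eq:defmcBsn}), one concludes that $\ll$ must lie in $\mc{B}(s,n)$ for a unique $s$ (uniqueness being clear from the defining inequalities), with $s\geq m-k$, and conversely that for a fixed $\ll\in\mc{B}(s,n)$ the subsets $I$ giving a nonzero contribution are exactly those obtained by placing $m-k$ of the $s$ ``large'' blocks (those supported in $\{1,\cdots,2s\}$) into $I^c$ --- together with the leftover index $2s+1$ when $n$ is odd --- and everything else into $I$; this accounts for ${s\choose m-k}$ subsets. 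In each case $\s(I)$ is a product of commuting block swaps, each an even permutation of $[n]$, so $\sgn(\s(I))=+1$. Hence $\scpr{S_{\ll}W}{p_{2k,r}(V)\oo E}={s\choose m-k}$ for $\ll\in\mc{B}(s,n)$ and $0$ otherwise, and letting $r\to\infty$ gives the claimed identity (note that the stabilized value does not depend on the parity of $r$, so the limit exists unconditionally).

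The step I expect to be most delicate is purely bookkeeping: getting the plethysm decomposition of $\Sym(\bw^2 W)$ right in the odd case (the forced vanishing $\ll_n=0$, reflecting the single leftover column), and verifying that a contributing configuration places the leftover index exactly at position $2s+1$ with $\ll_{2s+1}=2s$, which is what produces the shifted pairing pattern in the definition of $\mc{B}(s,2m+1)$. Once this is in place the counting and the sign computation are routine.
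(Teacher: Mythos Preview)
Your approach is correct and follows the same overall strategy as the paper: reduce to a sum over $I\in{[n]\choose 2k}$ via Lemma~\ref{lem:genlPieri}, identify when $\ll^1(I)\in\mc{B}$ and $\ll^2(I)\in\mc{B}_{\geq n-1}$, and count the contributing subsets. The difference is in how the counting and sign are organized.

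The paper sets up an explicit bijection (Lemmas~\ref{lem:sizeBsn/2-sj}--\ref{lem:corrImcBkn/2s}) between the contributing subsets $I$ and a family of partitions $\mc{B}'(k,n/2-k,s)$, then counts those partitions; in the even case this produces $|\mc{B}'(k,n/2-k,s)|=\sum_{s'=m-k}^s{s'-1\choose m-1-k}$, which is summed via the hockey-stick identity to ${s\choose m-k}$. Your argument bypasses this intermediary: once you know the consecutive-pair structure forces $I^c$ (minus the leftover index when $n$ is odd) to lie inside $\{1,\dots,2s\}$, the count ${s\choose m-k}$ is immediate. This is genuinely shorter.

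One point to tighten: your justification $\sgn(\s(I))=+1$ via ``commuting block swaps'' is clean when $n$ is even (each pair-transposition $(2a{-}1\ 2b{-}1)(2a\ 2b)$ is even), but when $n$ is odd the presence of the leftover index $2s+1$ misaligns the blocks and the permutation is no longer literally built from such swaps. The safest route is the paper's: $\sgn(\s(I))=(-1)^{|\mu|}$ with $|\mu|=\sum_{t\le 2k}(i_t-t)$, and observe directly that each contributing $I$ has $|\mu|$ even (the differences $i_t-t$ come in equal pairs, plus an even contribution from the small blocks past $2s+1$). With that adjustment your argument is complete.
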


With $m=\lfloor n/2\rfloor$ and $1\leq k\leq m$, we define the following collections of dominant weights
\begin{equation}\label{eq:defmcBs}
\begin{aligned}
 \mc{B} &=\{\ll\in\bb{Z}^{2k}_{\dom}:\ll_{2i-1}=\ll_{2i}\rm{ for }i=1,\cdots,k\}, \\
 \mc{B}_{\geq n-1} &=\{\ll\in\bb{Z}^{n-2k}_{\dom}:\ll_{2i-1}=\ll_{2i}\geq n-1\rm{ for }i=1,\cdots,m-k,\ll_{n-2k}=n-1\rm{ if }n\rm{ is odd}\}.
\end{aligned}
\end{equation}
We partition $\bb{Z}^n_{\dom}$ into the following collections of dominat weights $\mc{Y}(u)$, $u=0,\cdots,n$, defined by
\begin{equation}\label{eq:Yu}
 \mc{Y}(u)=\{\ll\in\bb{Z}^n_{\dom}:\ll_u\geq u-1\geq\ll_{u+1}\},
\end{equation}
In analogy with Lemma~\ref{lem:ll^iinmcC} one can prove:

\begin{lemma}\label{lem:ll^iinmcB}
 If $I\in{[n]\choose 2k}$, then the conditions $\ll^1(I)\in\mc{B}$ and $\ll^2(I)\in\mc{B}_{\geq n-1}$ are equivalent to
 \begin{itemize}
  \item $\ll\in\mc{Y}(u)$ for some $u=n-2k,\cdots,n$.
  \item $\{u+1,\cdots,n\}\subset I$.
  \item $\ll_{i_{2t-1}}=\ll_{i_{2t}}$ and $i_{2t}=i_{2t-1}+1$ for all $t=1,\cdots,m$. If $n$ is odd then $u\in I^c$ is odd and $\ll_u=u-1$.
 \end{itemize}
\end{lemma}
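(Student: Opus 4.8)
The plan is to mirror the proof of Lemma~\ref{lem:ll^iinmcC}, which handled the symmetric case, and adapt the parity bookkeeping there to the ``paired columns'' structure that appears in the skew-symmetric setting. As in that lemma, I would start by fixing $I\in{[n]\choose 2k}$ satisfying $\ll^1(I)\in\mc{B}$ and $\ll^2(I)\in\mc{B}_{\geq n-1}$, let $u$ be the unique index with $\ll\in\mc{Y}(u)$, and let $u'$ be the maximal element of $I^c$. The first step is to show $u'\leq u$: if $u'>u$ then, writing $I^c=\{i_{2k+1}<\cdots<i_n\}$ so that $i_n=u'$, the definition (\ref{eq:defll12}) gives $\ll^2(I)_{n-2k}=n+\ll_{u'}-u'\leq n+(u-1)-u=n-1$, and since membership in $\mc{B}_{\geq n-1}$ requires $\ll^2(I)_{n-2k}\geq n-1$ with equality only when $n$ is odd, one gets either an outright contradiction (n even) or forces $\ll^2(I)_{n-2k}=n-1$, i.e.\ $\ll_{u'}=u'-1$ and $u'=u$ after all (the inequality $\ll_{u'}\leq\ll_{u+1}\leq u-1\leq u'-1$ becomes an equality chain). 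Thus $u'\leq u$, giving $\{u+1,\cdots,n\}\subset I$ and hence $n-u\leq 2k$, i.e.\ $u\geq n-2k$.

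Next I would extract the pairing condition on the top part of $I$. Once $\{u+1,\cdots,n\}\subset I$, the largest elements of $I$ are $i_t=t+(n-2k)$ for $t=k-\lfloor(n-u)/2\rfloor\cdot 2+\cdots$ — more cleanly, the last $n-u$ entries of $I$ are forced, and I would use $\ll^1(I)\in\mc{B}$ (which demands $\ll^1(I)_{2t-1}=\ll^1(I)_{2t}$) together with the computation $\ll^1(I)_t=t+\ll_{i_t}-i_t$ to translate the equalities $\ll^1(I)_{2t-1}=\ll^1(I)_{2t}$ into the simultaneous conditions $\ll_{i_{2t-1}}=\ll_{i_{2t}}$ and $i_{2t}=i_{2t-1}+1$, exactly as recorded in the third bullet; note that by (\ref{eq:vanishSll}) the pattern $i_{2t}=i_{2t-1}+1$ is the only way the equality $\ll^1(I)_{2t-1}=\ll^1(I)_{2t}$ can hold without the relevant Schur functor vanishing, so the ``paired adjacent indices'' structure is essentially forced. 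When $n$ is odd there is a leftover single index, which must be the maximal element $u$ of $I^c$; the condition $\ll_{n-2k}=n-1$ in the definition of $\mc{B}_{\geq n-1}$ then translates, via $\ll^2(I)_{n-2k}=n+\ll_u-u$, into $\ll_u=u-1$, and a parity count on the remaining positions forces $u$ to be odd. The converse direction — that the three bulleted conditions imply $\ll^1(I)\in\mc{B}$ and $\ll^2(I)\in\mc{B}_{\geq n-1}$ — is a direct substitution back into (\ref{eq:defll12}) and (\ref{eq:defmcBs}).

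The step I expect to be the main obstacle is the careful handling of the odd-$n$ case: keeping track of which single index is ``unpaired,'' showing it has to be the top of $I^c$ rather than floating somewhere in the middle, and extracting both the value constraint $\ll_u=u-1$ and the parity constraint that $u$ is odd. In the even-$n$ case every index of $[n]$ gets paired and the argument is essentially a repackaging of Lemma~\ref{lem:ll^iinmcC} with the parity-mod-2 conditions replaced by equality-of-consecutive-parts conditions, so little new is needed there. For the odd case I would argue by a counting/parity bookkeeping: the $2k$ elements of $I$ that are ``small'' (below $u$) together with the forced block $\{u+1,\cdots,n\}$ account for all but one element of $[n]$ once the pairing is imposed, and that remaining element is exactly $u$; its oddness then comes from the requirement that the $k-(m-k)$-many small pairs occupy consecutive positions, which partitions $\{1,\cdots,u-1\}$ into adjacent pairs and hence forces $u-1$ even.
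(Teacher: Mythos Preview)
Your overall plan---mirror Lemma~\ref{lem:ll^iinmcC}---is exactly what the paper intends (it gives no proof and simply says the lemma is proved ``in analogy'' with that one). The skeleton is right: bound the maximal element of $I^c$ against the index $u$ for which $\ll\in\mc{Y}(u)$, then read off the pairing from the equalities defining $\mc{B}$ and $\mc{B}_{\geq n-1}$.

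There is, however, a genuine error in how you extract the pairing $i_{2t}=i_{2t-1}+1$, $\ll_{i_{2t-1}}=\ll_{i_{2t}}$. You invoke (\ref{eq:vanishSll}), but no Schur functor is in play here: the lemma is a purely combinatorial statement about $\ll$ and $I$. The correct argument is elementary. The condition $\ll^1(I)_{2t-1}=\ll^1(I)_{2t}$ reads, via (\ref{eq:defll12}), as
\[(\ll_{i_{2t-1}}-\ll_{i_{2t}}) + (i_{2t}-i_{2t-1}-1)=0.\]
Since $\ll$ is dominant and $i_{2t-1}<i_{2t}$, both summands are nonnegative, so both vanish. The same argument applied to the equalities in $\mc{B}_{\geq n-1}$ gives the pairing for $t=k+1,\ldots,m$, which you do not address explicitly but need: the third bullet ranges over all $t\leq m$, not just $t\leq k$.

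Two smaller points. First, your bound in the opening step is looser than necessary: if $u'>u$ then $\ll_{u'}\leq\ll_{u+1}\leq u-1$ and $u'\geq u+1$, so $\ll^2(I)_{n-2k}=n+\ll_{u'}-u'\leq n-2$, contradicting $\mc{B}_{\geq n-1}$ outright in both parities; there is no ``equality case'' to chase. Second, in the odd-$n$ case you still need to argue that the maximal element $u'$ of $I^c$ equals $u$ (not merely $u'\leq u$): the constraint $\ll^2(I)_{n-2k}=n-1$ gives $\ll_{u'}=u'-1$, and combined with $\ll_{u'}\geq\ll_u\geq u-1\geq u'-1$ this forces $u'=u$. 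The oddness of $u$ then follows because the pairing $i_{2t}=i_{2t-1}+1$ for $t>k$ partitions $I^c\setminus\{u\}$ into adjacent pairs inside $\{1,\ldots,u-1\}$, so $u-1$ is even.
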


\begin{lemma}\label{lem:permi2t-1i2t}
 Assume that $\ll,I$ satisfy the equivalent conditions in Lemma~\ref{lem:ll^iinmcB}. If $n=2m$ is even then
\begin{equation}\label{eq:permi2teven}
 \{(i_1,i_2),(i_3,i_4),\cdots,(i_{2m-1},i_{2m})\} = \{(1,2),(3,4),\cdots,(2m-1,2m)\}.
\end{equation}
If $n=2m+1$ is odd and if we write $u=2s+1$, then we have
\begin{equation}\label{eq:permi2todd}
 \{(i_1,i_2),(i_3,i_4),\cdots,(i_{2m-1},i_{2m})\} = \{(1,2),\cdots,(2s-1,2s),(2s+2,2s+3),\cdots,(2m,2m+1)\}.
\end{equation}
Moreover, we have that $\ll\in\mc{B}(s,n)$ for some $s=m-k,\cdots,m$.
\end{lemma}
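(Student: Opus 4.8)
The plan is to rephrase the hypotheses of Lemma~\ref{lem:ll^iinmcB} in terms of \emph{dominoes} --- two-element subsets of the form $\{a,a+1\}$ --- then to pin those dominoes down by an elementary induction, and finally to read off both the pair descriptions (\ref{eq:permi2teven})--(\ref{eq:permi2todd}) and the membership $\ll\in\mc{B}(s,n)$. First I would unwind the enumeration: writing $I=\{i_1<\cdots<i_{2k}\}$ and $I^c=\{i_{2k+1}<\cdots<i_n\}$, the pair $(i_{2t-1},i_{2t})$ occupies positions $2t-1,2t$ of the concatenated list, and since $2k$ is even no such pair straddles the boundary between the $I$-block and the $I^c$-block; thus the pairs with $t\leq k$ lie inside $I$ and those with $k<t\leq m$ lie inside $I^c$, while in the odd case the last entry $i_n$ is left unpaired. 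The condition $i_{2t}=i_{2t-1}+1$ then says precisely that $I$ is a disjoint union of $k$ dominoes and (even case) $I^c$ is a disjoint union of $m-k$ dominoes; in the odd case Lemma~\ref{lem:ll^iinmcB} identifies the leftover entry as $u$ itself (it is odd, lies in $I^c$, and is the largest element of $I^c$ since $\{u+1,\dots,n\}\subseteq I$), so writing $u=2s+1$ we have $i_n=u$ and $I^c\setminus\{u\}$ is a disjoint union of $m-k$ dominoes.

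The heart of the argument is the elementary observation that if $\{1,\dots,N\}$ with $N$ even is partitioned into dominoes then, by strong induction on $j$, the dominoes are exactly $\{1,2\},\{3,4\},\dots,\{N-1,N\}$: the element $2j-1$ sits in some domino $\{a,a+1\}$, and $a=2j-2$ is impossible because $2j-2$ already lies in the domino $\{2j-3,2j-2\}$ (and $a=0$ is out of range when $j=1$). For $n=2m$ this applies directly to the partition $\{1,\dots,n\}=I\sqcup I^c$ and yields (\ref{eq:permi2teven}). For $n=2m+1$, no domino can bridge the gap at $u$, since $u$ lies in neither $I$ nor $I^c\setminus\{u\}$; hence the dominoes split between the intervals $\{1,\dots,2s\}$ and $\{2s+2,\dots,2m+1\}$, and applying the observation to each gives $\{1,2\},\dots,\{2s-1,2s\}$ together with $\{2s+2,2s+3\},\dots,\{2m,2m+1\}$, which is (\ref{eq:permi2todd}).

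Finally I would combine this with the equalities $\ll_{i_{2t-1}}=\ll_{i_{2t}}$ furnished by Lemma~\ref{lem:ll^iinmcB}. In the even case they give $\ll_{2j-1}=\ll_{2j}$ for every $j$; setting $s=\lfloor u/2\rfloor$ and invoking $\ll\in\mc{Y}(u)$ together with dominance, one checks $\ll_{2s}\geq 2s-1$ and $\ll_{2s+1}\leq 2s$, so $\ll\in\mc{B}(s,2m)$, and as $u$ runs over $\{n-2k,\dots,n\}$ the index $s$ runs over $\{m-k,\dots,m\}$. In the odd case the pairing yields $\ll_{2i-1}=\ll_{2i}$ for $i\leq s$ and $\ll_{2i}=\ll_{2i+1}$ for $i>s$, while $\ll_u=u-1$ gives $\ll_{2s+1}=2s$; these are exactly the defining relations of $\mc{B}(s,2m+1)$, and $u=2s+1\in\{n-2k,\dots,n\}$ again forces $s\in\{m-k,\dots,m\}$. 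All of this is routine bookkeeping; the one place that needs care --- and the only genuine obstacle --- is the odd case, where one must correctly single out $i_n$ as the odd element $u$ and verify that no domino bridges the gap it leaves, after which the domino induction does the rest.
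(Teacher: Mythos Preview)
Your argument is correct and follows essentially the same line as the paper's, though you spell out the domino induction that the paper compresses into a single sentence. One point of genuine difference is the choice of $s$ in the even case: the paper sets $i_n=2s'$, bounds $s'\geq m-k$, and then takes $s$ to be the maximal index with $\ll_{2s}\geq 2s-1$, while you go directly with $s=\lfloor u/2\rfloor$. Your route is a bit cleaner and avoids the auxiliary $s'$; of course the two $s$'s agree since the $\mc{B}(s,2m)$ are pairwise disjoint.

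One small imprecision worth noting: when $n$ is even and $u$ happens to be odd (which can occur, e.g.\ $\ll=(0,0,\dots,0)$ gives $u=1$), the inequality $\ll_{2s+1}\leq 2s$ does not follow from $\ll\in\mc{Y}(u)$ and dominance alone---from $\mc{Y}(u)$ you get $\ll_{2s+1}=\ll_u\geq u-1=2s$, the wrong direction. You need the pairing $\ll_{2s+1}=\ll_{2s+2}$ (which you have already established) together with $\ll_{2s+2}=\ll_{u+1}\leq u-1=2s$ to conclude. So in the phrase ``invoking $\ll\in\mc{Y}(u)$ together with dominance'' you should also invoke the pairing; once you do, the check goes through.
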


\begin{proof}
 The conclusions (\ref{eq:permi2teven}--\ref{eq:permi2todd}) follow from the fact that $i_1,\cdots,i_n$ give a permutation of $[n]$ with $i_{2t}=i_{2t-1}+1$, and $i_n=u$ is odd when $n$ is odd. If $n$ is odd and $u=2s+1$, it follows from $u\geq n-2k$ that $s\geq m-k$. Moreover, we have $\ll_{2s+1}=\ll_u=u-1=2s$, and it follows from (\ref{eq:permi2todd}) that $\ll\in\mc{B}(s,n)$.
 
 Assume now that $n=2m$ is even. It follows from (\ref{eq:permi2teven}) that $i_n$ is even, so we can write $i_n=2s'$. Since $i_n+1,\cdots,n\in I$, we get $n-i_n\leq 2k$, i.e. $s'\geq m-k$. Since $i_n\leq u$, we have $\ll_{2s'}\geq\ll_u\geq u-1\geq 2s'-1$. We have by (\ref{eq:permi2teven}) that $\ll_{2i-1}=\ll_{2i}$ for $i=1,\cdots,m$, so taking $s$ to be the maximal index for which $\ll_{2s}\geq 2s-1$ we find that $s\geq s'$ and $\ll_{2s+1}=\ll_{2s+2}<2s+1$, i.e. $\ll\in\mc{B}(s,n)$.
\end{proof}

\begin{lemma}\label{lem:sizeBsn/2-sj}
 Let $m=\lfloor n/2\rfloor$, and for $m-k\leq s\leq m$ define the collection of partitions
\begin{equation}\label{eq:defBsn/2-sj}
\begin{aligned}
 \mc{B}(k,n/2-k,s)=\{\mu\in P(2k,n-2k):&\mu'_i\rm{ even for }i=1,\cdots,n-2k,\mu'_{n-2k}=2m-2s,\\
 &\mu_i\rm{ even for }i=2m-2s+1,\cdots,2k\}. 
\end{aligned}
\end{equation}
 Every partition $\mu\in\mc{B}(k,n/2-k,s)$ has even size, and the cardinality of the set $\mc{B}(k,n/2-k,s)$ is given by
\[
 |\mc{B}(k,n/2-k,s)| = \begin{cases}
 {s\choose m-k} & \rm{ if }n=2m+1\rm{ is odd}; \\                       
 {s-1\choose m-1-k} & \rm{ if }n=2m\rm{ is even}. \\                       
\end{cases}
\]
\end{lemma}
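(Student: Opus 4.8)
The plan is to reduce the count to the cardinality formula for $P^{0,0}(a,b)$ from Lemma~\ref{lem:countpartitions}, after peeling off the forced ``full'' rows of $\mu$. Throughout I may assume $n-2k\geq 1$; the only excluded case $n=2m$, $k=m$ is degenerate and handled by the conventions on empty ranges (the relevant set being empty).

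First I would analyze the condition $\mu'_{n-2k}=2m-2s$. Since $\mu\in P(2k,n-2k)$ has all parts at most $n-2k$, the integer $\mu'_{n-2k}$ is exactly the number of parts of $\mu$ equal to $n-2k$. Hence (note $0\leq 2m-2s\leq 2k$, which holds because $m-k\leq s\leq m$) the condition $\mu'_{n-2k}=2m-2s$ is equivalent to $\mu_1=\cdots=\mu_{2m-2s}=n-2k$ together with $\mu_{2m-2s+1}\leq n-2k-1$. Setting $\nu=(\mu_{2m-2s+1},\dots,\mu_{2k})$, this writes $\mu$ as $2m-2s$ rows of length $n-2k$ stacked on top of a partition $\nu$ having at most $2(s-m+k)$ parts and $\nu_1\leq n-2k-1$. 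For $1\leq j\leq n-2k$ one has $\mu'_j=(2m-2s)+\nu'_j$, and since $\nu_1\leq n-2k-1$ the conjugate parts $\nu'_j$ vanish once $j\geq n-2k$; therefore the remaining conditions defining $\mc{B}(k,n/2-k,s)$ say precisely that all parts of $\nu$ and all parts of $\nu'$ are even. In particular every $\mu'_j$ is even, so $|\mu|=\sum_j\mu'_j$ is even, which gives the first assertion.

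The map $\mu\mapsto\nu$ is thus a bijection from $\mc{B}(k,n/2-k,s)$ onto $P^{0,0}\big(2(s-m+k),\,n-2k-1\big)$ (its inverse prepends $2m-2s$ rows of length $n-2k$). By Lemma~\ref{lem:countpartitions}, this set has cardinality $\binom{(s-m+k)+\lfloor(n-2k-1)/2\rfloor}{\lfloor(n-2k-1)/2\rfloor}$. Specializing the floor concludes: for $n=2m+1$ one gets $\lfloor(n-2k-1)/2\rfloor=m-k$, so the count is $\binom{s}{m-k}$; for $n=2m$ one gets $\lfloor(n-2k-1)/2\rfloor=m-k-1$, so the count is $\binom{s-1}{m-1-k}$. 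I expect no genuine obstacle here: the points that need care are verifying that $\mu'_{n-2k}=2m-2s$ really forces the ``full rows'' shape, tracking how the leftover evenness conditions descend to $\nu$, and distinguishing the two parities of $n-2k-1$.
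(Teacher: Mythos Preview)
Your argument is correct and follows essentially the same route as the paper: peel off the $2m-2s$ forced rows of length $n-2k$, observe that the residual partition $\nu=\ol{\mu}$ must have all parts and all conjugate parts even, and then invoke Lemma~\ref{lem:countpartitions} for $P^{0,0}$. The only cosmetic difference is that the paper splits on the parity of $n$ before applying Lemma~\ref{lem:countpartitions}, landing $\ol{\mu}$ in $P^{0,0}(2(k+s-m),2(m-k))$ or $P^{0,0}(2(k+s-m),2(m-1-k))$, whereas you write the target uniformly as $P^{0,0}(2(s-m+k),n-2k-1)$ and let the floor in the binomial handle both parities; these are equivalent since $P^{0,0}(a,b)=P^{0,0}(a,b-1)$ when $b$ is odd.
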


\begin{proof}
 Since each $\mu_i'$ is even, $|\mu|=|\mu'|$ is even. To compute the size of $\mc{B}(k,n/2-k,s)$ we first note that the condition $\mu'_{n-2k}=2m-2s$ implies that $\mu_1=\cdots=\mu_{2m-2s}=n-2k$, so any $\mu\in\mc{B}(k,n/2-k,s)$ is determined by $\ol{\mu}=(\mu_{2m-2s+1},\cdots,\mu_{2k})\in P(2(k+s-m),n-2k)$. Since $\mu'_{n-2k}=2m-2s$, we must have $\ol{\mu}_1<n-2k$. The condition $\mu\in\mc{B}(k,n/2-k,s)$ is then equivalent (using (\ref{eq:defPhjab})) to
 \[\ol{\mu}\in\begin{cases}
P^{0,0}(2(k+s-m),2(m-k)) & \rm{if }n\rm{ is odd}, \\
P^{0,0}(2(k+s-m),2(m-1-k)) & \rm{if }n\rm{ is even}. \\
\end{cases}
\]
By Lemma~\ref{lem:countpartitions}, the number of choices for $\ol{\mu}$ is ${s\choose m-k}$ if $n$ is odd, respectively ${s-1\choose m-1-k}$ if $n$ is even.
\end{proof}

\begin{lemma}\label{lem:corrImcBkn/2s}
 Assume that $\ll\in\mc{B}(s,n)$ for some $s\geq m-k$. The collection of subsets $I\in{[n]\choose 2k}$ for which $\ll^1(I)\in\mc{B}$ and $\ll^2(I)\in\mc{B}_{\geq n-1}$ corresponds via (\ref{eq:Itomu}) to $\mc{B}'(k,n/2-k,s)$, where
 \[
  \mc{B}'(k,n/2-k,s)=\begin{cases}
  \mc{B}(k,n/2-k,s) & \rm{if }n\rm{ is odd}, \\
  \displaystyle\bigcup_{s'=m-k}^s\mc{B}(k,n/2-k,s') & \rm{if }n\rm{ is even}.
 \end{cases}
 \]
\end{lemma}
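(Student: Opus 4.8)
The plan is to argue in the spirit of its symmetric counterpart Lemma~\ref{lem:Plljtoparts}: first decode the two conditions $\ll^1(I)\in\mc{B}$ and $\ll^2(I)\in\mc{B}_{\geq n-1}$ into purely combinatorial data about the set $I$, and then transport that data through the dictionary (\ref{eq:Itomu})/(\ref{eq:Icfrommu'}) to read off membership in $\mc{B}(k,n/2-k,s')$.

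First I would pin down the shape of an admissible $I$. By Lemma~\ref{lem:ll^iinmcB} together with Lemma~\ref{lem:permi2t-1i2t}, such an $I$ must be a union of the consecutive two-element blocks listed in (\ref{eq:permi2teven})/(\ref{eq:permi2todd}) --- augmented, when $n$ is odd, by the singleton $\{u\}=\{2s+1\}$, which lies in $I^c$ --- and the requirement $\{u+1,\dots,n\}\subset I$ forces the top blocks into $I$; counting the free blocks that must then be placed in $I^c$ gives exactly $\binom{s}{m-k}$ admissible $I$. I would then observe that for any $I$ of this shape the remaining requirements hold automatically from $\ll\in\mc{B}(s,n)$ and the dominance of $\ll$: the equalities $\ll^1(I)_{2i-1}=\ll^1(I)_{2i}$ and $\ll^2(I)_{2i-1}=\ll^2(I)_{2i}$ because $\ll$ is constant on each block, the normalization $\ll^2(I)_{n-2k}=n-1$ for $n$ odd because $\ll_{2s+1}=2s$, and the inequalities $\ll^2(I)_{2i}\geq n-1$ by a short monotonicity estimate bounding $\ll_{2j}$ --- for a block $\{2j-1,2j\}$ sitting in the $p$-th slot of $I^c$ --- below by a linear function of $p$ and $j$, using $\ll_{2j}\geq\ll_{2s}\geq 2s-1$. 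The only genuine extra restriction that survives occurs when $n$ is even: the largest block contained in $I^c$, say $\{2s'-1,2s'\}$, must satisfy $s'\leq s$ (equivalently $\ll_{2s'}\geq 2s'-1$), whereas for $n$ odd the largest element of $I^c$ is always the fixed singleton $\{2s+1\}$.

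Next I would run the block description of $I$ through (\ref{eq:Itomu}), writing $2s'=\max(I^c)$ when $n$ is even and $s'=s$ when $n$ is odd. Reading (\ref{eq:Itomu})/(\ref{eq:Icfrommu'}) as a lattice path (an East step at each $t\in I$, a North step at each $t\notin I$), the statement ``$I$ is a union of consecutive blocks'' becomes ``$\mu'_i$ is even for all $i$'', the location of the largest element of $I^c$ becomes ``$\mu'_{n-2k}=2m-2s'$'', and ``the blocks of $I^c$ lie in $[1,2s']$'' becomes ``$\mu_i$ is even for $i>2m-2s'$''. Comparing with (\ref{eq:defBsn/2-sj}), this identifies the admissible $I$ with a given value of $s'$ precisely with the elements of $\mc{B}(k,n/2-k,s')$, and every step of the translation is reversible. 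Hence for $n=2m+1$ odd the admissible $I$ are in bijection with $\mc{B}(k,n/2-k,s)$, while for $n=2m$ even they are in bijection with $\bigcup_{s'=m-k}^{s}\mc{B}(k,n/2-k,s')$, the range $m-k\leq s'\leq s$ being the one permitted by the last point of step one. As a consistency check, $\sum_{s'=m-k}^{s}\binom{s'-1}{m-1-k}=\binom{s}{m-k}$ by the hockey-stick identity, which matches Lemma~\ref{lem:sizeBsn/2-sj} against the block count above.

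I expect the main obstacle to be the verification in step one that the inequality half of $\ll^2(I)\in\mc{B}_{\geq n-1}$ is automatic --- a piece of monotonicity bookkeeping relying on the ordering of the blocks inside $I^c$ --- together with the chore of keeping the even and odd cases, and the index shifts forced by (\ref{eq:Itomu}), cleanly separated. Once the shape of $I$ from step one is in hand, the lattice-path reformulation makes the passage between $I$ and $\mu$, and hence the final identification, essentially routine.
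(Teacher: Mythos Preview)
Your proposal is correct and follows essentially the same route as the paper: invoke Lemmas~\ref{lem:ll^iinmcB} and~\ref{lem:permi2t-1i2t} to force $I$ to be a union of consecutive two-element blocks with the top portion determined, then translate via (\ref{eq:Itomu})/(\ref{eq:Icfrommu'}) into the parity and boundary conditions defining $\mc{B}(k,n/2-k,s')$, treating the odd and even cases separately. The paper carries out the forward direction in the same way and dismisses the converse with ``follows easily by tracing back the arguments''; your sketch of the monotonicity check for $\ll^2(I)_{2i}\geq n-1$ and the lattice-path reading of the dictionary are welcome elaborations of exactly that omitted step, not a different argument.
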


\begin{proof} Consider $\ll\in\mc{B}(s,n)$ for $s\geq m-k$, and $I\in{[n]\choose 2k}$ satisfying the conditions of Lemma~\ref{lem:ll^iinmcB}. If $n=2m+1$ is odd then $i_n=2s+1$ and $I$ contains $2s+2,\cdots,n$, i.e. the corresponding $\mu\in P(2k,n-2k)$ has
\[\mu_1=\cdots=\mu_{2m-2s}=n-2k,\quad \mu_{2m-2s+1}<n-2k,\]
so $\mu'_{n-2k}=2m-2s$. For $2k<t<n$ we have that $i_t\leq 2s$, so $\mu'_{t-2k}=t-i_t$ is even by (\ref{eq:permi2todd}). The set of $\mu_i$ with $2m-2s<i\leq 2k$ coincides with that of differences $i_t-t$ for $1\leq t\leq 2(k-m+s)$, which are all even again by (\ref{eq:permi2todd}) and the fact that $i_t\leq 2s$ for $t\leq 2(k-m+s)$.
 
Assume next that $n=2m$ is even, and use (\ref{eq:permi2teven}) to write $i_n=2s'$. As in the previous paragraph, this implies $\mu'_{n-2k}=2m-2s'$. By (\ref{eq:permi2teven}) all the differences $t-i_t$ are even, so all $\mu_i,\mu'_i$ are even. This shows that $I\in\mc{B}(k,n/2-k,s')$. Since $i_n+1,\cdots,n\in I$ we get as before $s'\geq m-k$. If $s'>s$ then $\ll^2(I)_{n-2k}=\ll_{i_n}+n-i_n=\ll_{2s'}+n-2s'\leq\ll_{2s+1}+n-2s'\leq 2s+n-2s'\leq n-2$, a contradiction.

The verification that $\mu\in\mc{B}'(k,n/2-k,s)$ yields a subset $I$ with $\ll^1(I)\in\mc{B}$ and $\ll^2(I)\in\mc{B}_{\geq n-1}$ follows easily by tracing back the arguments.
\end{proof}

\begin{proof}[Proof of Proposition~\ref{prop:limskew}]
 We have
\[\scpr{S_{\ll}W}{p_{2k,r}(V)\oo E}=\scpr{S_{\ll}W\oo p_{2k,r}(W)}{E}\overset{(\ref{eq:genlPieri}),(\ref{eq:Sll+rI=Sll(rI)})}{=}\sum_{I\in{[n]\choose 2k}}\sgn(\s(I))\scpr{S_{\ll(r,I)}W}{E}.\]
Using notation (\ref{eq:defll12}) and (\ref{eq:defmcBs}) we get that for $r\gg 0$
\[
 \scpr{S_{\ll(r,I)}W}{E} = \begin{cases}
 1, & \rm{if }\ll^1(I)\in\mc{B}\rm{ and }\ll^2(I)\in\mc{B}_{\geq n-1}, \\
 0, & \rm{otherwise}.
 \end{cases}
\]

It follows that
\[\lim_{r\to\infty} p_{2k,r}(V)\oo E = \sum_{\substack{\ll\in\bb{Z}^n_{\dom},\ I\in{[n]\choose 2k}\\ \ll^1(I)\in\mc{B},\ \ll^2(I)\in\mc{B}_{\geq n-1}}}\sgn(\s(I))\cdot S_{\ll}W\] \[\overset{Lemmas~\ref{lem:permi2t-1i2t}-\ref{lem:corrImcBkn/2s}}{=}\sum_{s=m-k}^m\left(\sum_{\substack{\ll\in\mc{B}(s,n) \\ \mu\in\mc{B}'(k,n/2-k,s)}} (-1)^{|\mu|} S_{\ll}W\right)\overset{|\mu|\rm{ even}}{=} \sum_{s=m-k}^m\left(\sum_{\ll\in\mc{B}(s,n)} |\mc{B}'(k,n/2-k,s)|\cdot S_{\ll}W\right)\]

If $n=2m+1$ is odd, then $|\mc{B}'(k,n/2-k,s)|=|\mc{B}(k,n/2-k,s)|={s\choose m-k}$, and the desired equality follows. Similarly, when $n=2m$ is even, we get
\[|\mc{B}'(k,n/2-k,s)|=\sum_{s'=m-k}^s |\mc{B}(k,n/2-k,s')|=\sum_{s'=m-k}^s {s'-1\choose m-1-k}={s\choose m-k}.\qedhere\]
\end{proof}

\section{Equivariant $\D$-modules on symmetric matrices}\label{sec:symm}

In this section we compute the characters of the $\GL$-equivariant $\D$-modules on the vector space $M^{\symm}$ of symmetric $n\times n$ matrices. We let $W$ denote a complex vector space of dimension $n$, $V=W^*$, and we identify $\Sym^2 W$ with $M^{\symm}$, where squares $w^2$ correspond to matrices of rank one. If we write $\GL=\GL(W)$, and let $M^{\symm}_s$ denote the subvariety of matrices of rank at most $s$ then the main result of this section is:

\begin{theorem}\label{thm:symmetric}
 There exist $(2n+1)$ simple $\GL$-equivariant holonomic $\D$-modules on $M^{\symm}$, namely 
\[
 C^j_s=\begin{cases}
 \mc{L}(M^{\symm}_{n-s},M^{\symm}) & \rm{if }j\equiv s\ (\opmod\ 2), \\
 \mc{L}(M^{\symm}_{n-s},M^{\symm};1/2) & \rm{if }j\equiv s+1\ (\opmod\ 2), \\
\end{cases}
\rm{ for }s=0,\cdots,n-1,\ j=1,2,
\]
and $C_n^1=C_n^2=\mc{L}(\{0\},M^{\symm})$. For all $s,j$, the character of $C_s^j$ is $\mf{C}_s^j$ (as defined in (\ref{eq:defcjs})).
\end{theorem}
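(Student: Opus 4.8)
The fact that there are exactly $2n+1$ simples, together with the list $\mc L(M^{\symm}_r,M^{\symm})$ for $0\le r\le n$ and $\mc L(M^{\symm}_r,M^{\symm};1/2)$ for $1\le r\le n$, is already Theorem~\ref{thm:classificationDmods}; relabelling these modules as $C^j_s$ as in the statement is a bijection, so what is left is to compute their characters and to exhibit the $C^1_s$ and $C^2_0$ inside $S_{\sdet}$ and $S_{\sdet}\cdot\sdet^{1/2}$. I will carry this out by the strategy outlined after Theorem~\ref{thm:nxnbasicthm}: (i) compute, as explicit $\bb Z$-linear combinations of the $\mf C^j_s$, the Euler characteristics of the $\D$-module direct images of the (possibly twisted) structure sheaves of the rank orbits $O_k\subset M^{\symm}$; (ii) deduce from the general structure of these direct images a unitriangular change of basis in $\G(W)$ between the classes $[C^j_s]$ and the $\mf C^j_s$; and (iii) use the Fourier transform to force that change of basis to be the identity.

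\textbf{Step 1.} Fix $0\le k\le n-1$. A resolution of singularities of $\ol{O_k}=M^{\symm}_k$ is $Z_k=\rm{Tot}_X(\Sym^2\mc Q^*)$, where $X=\bb G(k,V)$ with $V=W^*$ and $\mc Q$ is the rank-$k$ tautological quotient of $V$ (notation of Section~\ref{subsec:bott}); inside $Z_k$, the orbit $O_k$ is the complement of the zero locus of the ``fibrewise determinant'' section, a section of the pullback of $\det(\mc Q^*)^{\oo 2}=\mc O_X(-2)$, so $O_k\hookrightarrow Z_k$ is an affine open immersion and $\iota_k:O_k\hookrightarrow M^{\symm}$ factors as in the Introduction. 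I apply Proposition~\ref{prop:EuleraslimPr} with this $X$, with $U=\Sym^2 V$ (so that $U^*=\Sym^2 W=M^{\symm}$ and $\det(U^*)\oo\Sym(U^*)=E$), with $\mc L=\mc O_X(2)$, and with $\mc L'=\mc O_X$, resp.\ $\mc L'=\mc O_X(-1)$: the first choice presents a $\D_{Z_k}$-module restricting on $O_k$ to $\mc O_{O_k}$, and the second one restricting to the nontrivial equivariant local system (because $\mc O_X(-2)|_{O_k}$ is trivialized by the fibrewise determinant, so $\mc O_X(-1)|_{O_k}$ is a square root of the trivial bundle with its canonical flat connection). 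By Remark~\ref{rem:Euleraslim} the relevant sequences are $P_r=\pm p_{k,2r}(V)$, resp.\ $\pm p_{k,2r-1}(V)$, so Proposition~\ref{prop:limsym} evaluates $\chi(\int_{\iota_k}\mc O_{O_k})$ and its twisted counterpart as the two displayed families of $\mf C^j_s$-combinations, the parity of $r$ selecting the local system. The two extreme values of $k$ are done by hand: for $k=0$ one has $\int_{\iota_0}\mc O_{\{0\}}=E=\mf C^1_n=\mf C^2_n$; for $k=n$ the orbit is open and $\iota_n$ is an affine open immersion, so $\int_{\iota_n}\mc O_{O_n}=S_{\sdet}$ and the twisted version is $S_{\sdet}\cdot\sdet^{1/2}$, whose characters are read from Cauchy's formula. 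Along the way the hypotheses of Proposition~\ref{prop:EuleraslimPr} must be checked in this situation: the colimit identity (\ref{eq:twolims}) on $X$, the $G$-admissibility of the twisted symmetric powers, and the stated restrictions of $\mc M$ to $O_k$.

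\textbf{Steps 2 and 3.} For an affine locally closed immersion, the intermediate extension $\mc L(\ol{O_k},M^{\symm};L)$ is the only composition factor of $\int_{\iota_k}^0(\mc O_{O_k}\oo L)$ whose support is all of $\ol{O_k}$, it occurs there with multiplicity one, and it occurs in no $\int_{\iota_k}^i$ with $i\neq0$; every other composition factor of every $\int_{\iota_k}^i(\mc O_{O_k}\oo L)$ is supported on $M^{\symm}_{k-1}$. Hence in $\G(W)$,
\[ \chi\Bigl(\int_{\iota_k}(\mc O_{O_k}\oo L)\Bigr)=\pm\,[\mc L(\ol{O_k},M^{\symm};L)]+(\text{composition factors supported on }M^{\symm}_{k-1}). \]
On the other hand, by Step~1 and (\ref{eq:defmcC^isn}) the same class, expanded in the basis $\{\mf C^j_s\}$, has a single leading term $\mf C^{j'}_{n-k}$ with coefficient $\pm1$ (it comes from the $s=n-k$ summand, whose support is exactly $\ol{O_k}$), while all remaining terms $\mf C^{j''}_s$ have $s>n-k$, hence strictly smaller support. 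Matching these two descriptions --- reading off, for each $k$ and each local system, which $\mf C^{j'}_{n-k}$ is the leading term and identifying it with $\pm[\mc L(\ol{O_k},M^{\symm};L)]$ --- both fixes the parity-dependent labelling $j\equiv s$ versus $j\equiv s+1$ of the statement and gives, for every $(s,j)$, a relation
\[ [C^j_s]=\mf C^j_s+\sum_{(s',j')\succ(s,j)}a^{(s,j)}_{(s',j')}\,\mf C^{j'}_{s'},\qquad a^{(s,j)}_{(s',j')}\in\bb Z, \]
where $(s',j')\succ(s,j)$ means $s'>s$, i.e.\ that $C^{j'}_{s'}$ has strictly smaller support than $C^j_s$. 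This is the unitriangular system (\ref{eq:vtomfv}). The Fourier transform $\mc F$ relative to $\Sym^2 W$ (Section~\ref{subsec:Fourier}) is an autoequivalence of $\GL$-equivariant holonomic $\D$-modules, so it permutes the simples $C^j_s$, and it is compatible with the character computation; by (\ref{eq:defGFourierU}) together with (\ref{eq:defmcC^isn}) it sends $\mf C^1_s\mapsto\mf C^1_{n-s-1}$ and $\mf C^2_s\mapsto\mf C^2_{n-s}$ (the computation indicated in Remark~\ref{rem:charvarieties}), which is an order-reversing bijection of the $(2n+1)$-element poset $\{C^j_s\}$ ordered by reverse inclusion of supports. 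Lemma~\ref{lem:uppertriangularFourier} then forces all $a^{(s,j)}_{(s',j')}=0$, i.e.\ the character of $C^j_s$ equals $\mf C^j_s$. Finally, comparing the now-known characters $\mf C^1_s$ and $\mf C^2_0$ term by term with the successive quotients of the pole-order filtrations of $S_{\sdet}$ and $S_{\sdet}\cdot\sdet^{1/2}$ identifies $C^1_s=F_{s+1}/F_{s-1}$ and $C^2_0=F_0=S$, the strictness $F_{s-1}\subsetneq F_{s+1}$ being exactly the nonvanishing of $\mf C^1_s$.

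\textbf{Expected main obstacle.} The subtle point is the combinatorial bookkeeping in Step~2: correctly matching the two families $\{\mf C^1_s\}$ and $\{\mf C^2_s\}$ with the two families of simples (trivial versus half-integral local system), keeping track of all the signs, and thereby producing the parity-dependent case distinction $j\equiv s$ / $j\equiv s+1$. A second, more technical obstacle is verifying the hypotheses of Proposition~\ref{prop:EuleraslimPr} for the resolution $Z_k$ --- in particular the colimit identity (\ref{eq:twolims}) --- and treating the boundary strata $k=0$ and $k=n$ compatibly with the generic case.
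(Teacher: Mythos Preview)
Your proposal is correct and follows essentially the same route as the paper: compute the Euler characteristics of $\int_{\pi_k}\mc{M}_k^0$ and $\int_{\pi_k}\mc{M}_k^1$ via Proposition~\ref{prop:EuleraslimPr} and Proposition~\ref{prop:limsym}, read off the unitriangular relations $C^j_s=\mf{C}^j_s+(\text{higher})$, and invoke the Fourier transform with Lemma~\ref{lem:uppertriangularFourier}. One small slip: the Fourier formulas you quote from Remark~\ref{rem:charvarieties} are interchanged compared to the ones used (and needed) in the paper's proof, namely $\mc{F}(\mf{C}^1_s)=\mf{C}^1_{n-s}$ and $\mc{F}(\mf{C}^2_s)=\mf{C}^2_{n-s-1}$; this does not affect the argument, since either version gives an order-reversing bijection of the poset and Lemma~\ref{lem:uppertriangularFourier} applies.
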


The remaining assertion of the Theorem on Equivariant $\D$-modules on Symmetric Matrices described in the Introduction is the identification $C_s^1=F_{s+1}/F_{s-1}$ for $s=0,\cdots,n$: its proof follows closely the proof of Theorem~\ref{thm:nxnbasicthm} in the next section, so we leave the details to the interested reader. The classification of $\GL$-equivariant holonomic simple $\D$-modules is explained in Theorem~\ref{thm:classificationDmods}, so we only need to check that $\mf{C}_s^j$ is the character of $C_s^j$. For $k=1,\cdots,n$, we consider the situation of Section~\ref{subsec:pfwdDmods}, with $X=X_k=\bb{G}(k,V)$ and $\mc{R},\mc{Q}$ as in (\ref{eq:tautGr}). We let $U=\Sym^2 V$, $\eta=\Sym^2\mc{Q}$. If we write $Y=Y_k$, $\pi=\pi_k$, then (\ref{eq:diagrGeneric}) becomes 
\begin{equation}\label{eq:diagrsym^2W}
\xymatrix{
Y_k=\rm{Tot}_{X_k}(\Sym^2\mc{Q}^*) \ar@{^{(}->}[r] \ar[dr]_{\pi_k} & \Sym^2 W\times\bb{G}(k,V) \ar[d] \\
 & \Sym^2 W \\
}
\end{equation}
Locally on $X_k$, $\mc{Q}^*$ trivializes to a vector space of dimension $k$, and $Y_k$ gets identified with the space of $k\times k$ symmetric matrices. We take $\mc{L}=(\det\mc{Q})^{\oo 2}$, consider its $\GL$-equivariant inclusion $\mc{L}\subset\Sym^k\eta$ and note that $\mc{L}$ is locally generated by the symmetric determinant. If we let $Y_k^0\subset Y_k$ be the open set defined locally by the non-vanishing of the determinant, $\mc{M}_k^0=\mc{O}_{Y_k^0}$ is a $\mc{D}_{Y_k}$-module. Note that $Y_k^0$ maps isomorphically via $\pi_k$ to the orbit of symmetric matrices of rank $k$. As a $\GL$-equivariant quasi-coherent sheaf on $X_k$

\begin{equation}\label{eq:defM0symm}
\mc{M}_k^0=\bigoplus_{\ll\in\bb{Z}^k_{\dom},\ll_i\rm{ even}}S_{\ll}\mc{Q}=\varinjlim_{r\equiv k+1\ (\opmod\ 2)} (\det\mc{Q})^{\oo r}\oo\det\left(\Sym^2\mc{Q}^*\right)\oo\Sym\left(\Sym^2\mc{Q}^*\right),
\end{equation}
so condition (\ref{eq:twolims}) is satisfied in our context. The Euler characteristic of the $\D$-module pushforward $\int_{\pi_k}\mc{M}_k^0$ is now easily computed as a consequence of Proposition~\ref{prop:EuleraslimPr} and of Remark~\ref{rem:Euleraslim}:
\begin{equation}\label{eq:Dmodpfwdsym0}
\chi\left(\int_{\pi_k}\mc{M}_k^0\right) = (-1)^{k\cdot(n-k)}\cdot\left(\lim_{\substack{r\to\infty \\ r\equiv k+1\ (\opmod\ 2)}} p_{k,r}(V)\oo\det\left(\Sym^2 W\right)\oo\Sym\left(\Sym^2 W\right)\right),
\end{equation}
which is evaluated explicitly in Proposition~\ref{prop:limsym}.

We next explain why $\mc{M}_k^0\oo\det(\mc{Q})$ also has the structure of a $\D_{Y_k}$-module. Consider the \'etale double cover $Y_k^{1/2}$ of $Y_k^0$ defined locally by the square-root of the symmetric determinant. The structure sheaf $\mc{O}_{Y_k^{1/2}}$ is naturally a $\D_{Y_k^0}$-module \cite{coutinho-levcovitz} and hence also a $\D_{Y_k}$-module. It contains $\mc{M}_k^0$ so we can define $\mc{M}_k^1$ as the cokernel of the inclusion $\mc{M}_k^0\subset\mc{O}_{Y_k^{1/2}}$. As a $\GL$-equivariant quasi-coherent sheaf on $X_k$, $\mc{M}_k^1$ is given~by
\begin{equation}\label{eq:defM1symm}
\mc{M}_k^1=\bigoplus_{\ll\in\bb{Z}^k_{\dom},\ll_i\rm{ odd}}S_{\ll}\mc{Q}=\mc{O}_{Y_k^0}\oo\det(\mc{Q}).
\end{equation}
It follows that $\mc{M}_k^1$ satisfies the setting of Proposition~\ref{prop:EuleraslimPr} with $\mc{L}'=\det(\mc{Q})$ so we can compute the Euler characteristic of its direct image via $\pi_k$ as
\begin{equation}\label{eq:Dmodpfwdsym1}
\chi\left(\int_{\pi_k}\mc{M}_k^1\right) = (-1)^{k\cdot(n-k)}\cdot\left(\lim_{\substack{r\to\infty \\ r\equiv k\ (\opmod\ 2)}} p_{k,r}(V)\oo\det\left(\Sym^2 W\right)\oo\Sym\left(\Sym^2 W\right)\right),
\end{equation}
which is evaluated in Proposition~\ref{prop:limsym}. We are now ready to prove the main result of this section:

\begin{proof}[Proof of Theorem~\ref{thm:symmetric}] The classification of simple $\D$-modules follows from Theorem~\ref{thm:classificationDmods}, so it remains to check that in $\G(W)$ we have the equalities $C_s^j=\mf{C}_s^j$ for $s=0,\cdots,n$ and $j=1,2$. The equalities (\ref{eq:Dmodpfwdsym0}--\ref{eq:Dmodpfwdsym1}) together with Proposition~\ref{prop:limsym} yield for $s=1,\cdots,n$ and $j=1,2$,
\[C_{n-s}^j=\mf{C}_{n-s}^j+\sum_{i=n-s+1}^n (a^s_i\cdot \mf{C}^1_i+b^s_i\cdot\mf{C}^2_i),\rm{ for some integers }a^s_i,b^s_i.\]
Since $C_n^j=\det(\Sym^2 W)\oo\Sym(\Sym^2 W)$ has character $\mf{C}_n^j$ (by Cauchy's formula \cite[Prop.~2.3.8]{weyman}), the equation above is also satisfied for $s=0$. The Fourier transform $\mc{F}$ permutes the modules $C_s^j$, and it takes
\[
\mc{F}(\mf{C}_s^1)=\mf{C}_{n-s}^1\rm{ for }s=0,\cdots,n\rm{ and }\quad\mc{F}(\mf{C}_s^2)=\mf{C}_{n-s-1}^2\rm{ for }s=0,\cdots,n-1.
\]
We can then apply Lemma~\ref{lem:uppertriangularFourier} to the poset $\mc{P}=\{(s,j):s=0,\cdots,n-1,j=1,2\}\cup\{(n,1)\}$ with the lexicographic ordering given by $(s,j)<(s',j')$ if and only if $s<s'$, or $s=s'$ and $j<j'$. We let $\mf{v}_{(s,j)}=\mf{C}_s^j$ and $v_{(s,j)}=C_s^j$, and conclude using Lemma~\ref{lem:uppertriangularFourier} that $C_s^j=\mf{C}_s^j$ for all $s=0,\cdots,n$ and $j=1,2$.
\end{proof}

\section{Equivariant $\D$-modules on $m\times n$ matrices}\label{sec:mxnmatrices}

In this section we compute the characters of the $\GL$-equivariant $\D$-modules on the vector space $M$ of $m\times n$ matrices, for $m\geq n$. We consider $W_1,W_2$ vector spaces of dimension $\dim(W_1)=m$, $\dim(W_2)=n$, let $V_i=W_i^*$, and identify $W=W_1\oo W_2$ with $M$, where tensor products $w_1\oo w_2$ correspond to matrices of rank one. If we write $\GL=\GL(W_1)\times\GL(W_2)$, let $M_s$ denote the subvariety of matrices of rank at most $s$, and recall the notation (\ref{eq:defas}) for the characters $\mf{A}_s$, then the main result of this section is:

\begin{chargenl*}
 The simple $\GL$-equivariant holonomic $\D$-modules on $M$ are $A_s=\mc{L}(M_{n-s},M)$, $s=0,\cdots,n$, and for each $s$ the character of $A_s$ is $\mf{A}_s$. When $m=n$, $A_s$ is as described in Theorem~\ref{thm:nxnbasicthm}, while for $m>n$ it can be expressed in terms of local cohomology:
 \begin{equation}\label{eq:Asloccoh}
A_s=\mc{H}^{1+s\cdot(m-n)}_{M_{n-1}}(M,\mc{O}_M)=\mc{H}^{\codim(M_{n-s})}_{M_{n-s}}(M,\mc{O}_M).
 \end{equation}
\end{chargenl*}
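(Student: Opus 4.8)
The plan is to adapt the proof of Theorem~\ref{thm:symmetric}, which is somewhat simpler in the present case because the isotropy groups of the $\GL$-action on $M$ are connected: by Theorem~\ref{thm:classificationDmods} the simple $\GL$-equivariant holonomic $\D$-modules on $M$ are precisely the $A_s=\mc{L}(M_{n-s},M)$, $s=0,\dots,n$ (no non-trivial local systems occur), so only the character identity $A_s=\mf{A}_s$ and the two geometric realizations require proof. For $k=1,\dots,n$ I run the construction of Section~\ref{subsec:pfwdDmods} with $X_k=\bb{G}(k,V_1)\times\bb{G}(k,V_2)$, $U=V_1\oo V_2$, tautological quotients $\mc{Q}_1,\mc{Q}_2$, $\eta=\mc{Q}_1\oo\mc{Q}_2$, $Y_k=\rm{Tot}_{X_k}(\eta^*)$, $\pi_k\colon Y_k\to M$, and $\mc{L}=\mc{O}_{X_k}(1,1)=\det(\mc{Q}_1)\oo\det(\mc{Q}_2)$ included in $\Sym^k(\eta)$ via the $k\times k$ determinant. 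Then $\pi_k$ restricts to an isomorphism from the open locus $Y_k^0\subset Y_k$ where the determinant is invertible onto the orbit $O_k$ of rank-$k$ matrices, condition~(\ref{eq:twolims}) holds by Cauchy's formula (exactly as for~(\ref{eq:defM0symm})), and one sets $\mc{M}_k^0=\mc{O}_{Y_k^0}$, which requires no twist ($\mc{L}'=\mc{O}_{X_k}$). Proposition~\ref{prop:EuleraslimPr}, Remark~\ref{rem:Euleraslim} and Proposition~\ref{prop:limsgeneral} then give
\[
\chi\left(\int_{\iota_k}\mc{O}_{O_k}\right)=\sum_{s=n-k}^{n}(-1)^{(m-n)(n-k-s)}\binom{s}{s-n+k}\,\mf{A}_s.
\]

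To deduce $A_s=\mf{A}_s$, factor $\iota_k$ as a closed embedding $O_k\hookrightarrow M\setminus M_{k-1}$ followed by the open immersion into $M$; then $\int_{\iota_k}\mc{O}_{O_k}$ has $\mc{L}(M_k,M)=A_{n-k}$ as its unique composition factor of full support $M_k$, occurring once, while every other composition factor is some $A_t$ with $t>n-k$. Since the $s=n-k$ coefficient of the displayed Euler characteristic equals $1$ and the $\mf{A}_s$ are linearly independent (the sets $\mc{A}(s;m,n)$ being pairwise disjoint), a downward induction on $s$, with base case $A_n=E=\mf{A}_n$ by Cauchy's formula, yields unitriangular relations $A_s=\mf{A}_s+\sum_{t>s}a^s_t\cdot\mf{A}_t$ with $a^s_t\in\bb{Z}$ (linear independence also pinning down the signs). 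The Fourier transform $\mc{F}$ of Section~\ref{subsec:Fourier} permutes the $A_s$: it rotates characteristic varieties, so $\mc{F}(\mc{L}(M_r,M))=\mc{L}(M_{n-r},M)$ and hence $\mc{F}(A_s)=A_{n-s}$, while on characters $\mc{F}(\mf{A}_s)=\mf{A}_{n-s}$ by~(\ref{eq:defGFourierU}) and~(\ref{eq:defas}). Applying Lemma~\ref{lem:uppertriangularFourier} to the poset $\{0,\dots,n\}$ with the order-reversing involution $s\mapsto n-s$, $\mf{v}_s=\mf{A}_s$, $v_s=A_s$, forces all $a^s_t=0$, i.e. $A_s=\mf{A}_s$.

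For $m=n$ the open orbit $O_n=\GL_n$ is affine, so $\int_{\iota_n}\mc{O}_{O_n}=S_{\det}$ sits in cohomological degree $0$; by the character identity $S_{\det}=\bigoplus_{\ll\in\bb{Z}^n_{\dom}}S_\ll\bb{C}^n\oo S_\ll\bb{C}^n$ has composition factors $A_0,\dots,A_n$, each of multiplicity one (the sets $\mc{A}(s;n,n)$ partitioning $\bb{Z}^n_{\dom}$). The submodules $F_s=\langle\det^{-s}\rangle_{\D}$ form an increasing chain, since $\det^{-s+1}=\det\cdot\det^{-s}$; one has $F_0=S$, and $F_n=S_{\det}$ by Cayley's identity ($\det(\partial)\cdot\det^{-k}$ being a nonzero scalar multiple of $\det^{-k-1}$ for $k\geq n$), so the chain has at most $n$ strict steps. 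Each step $F_{s-1}\subsetneq F_s$ is in fact strict, whence $F_s/F_{s-1}$ is a single composition factor: the generator $\det^{-s}$ has weight $(s^n;s^n)$, which occurs in $\mf{A}_s$ and in no other $\mf{A}_t$, while an induction on $n$ via transverse slices to the rank strata shows that the composition factors of $F_{s-1}$ lie among $A_0,\dots,A_{s-1}$. Therefore $F_s/F_{s-1}=A_s$, which recovers Theorem~\ref{thm:nxnbasicthm}.

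For $m>n$ the open orbit $O_n=M\setminus M_{n-1}$ fibres over $\bb{G}(n,\bb{C}^m)$ with affine fibres $\cong\GL_n$, so $\int_{\iota_n}\mc{O}_{O_n}$ occupies several cohomological degrees and I must identify its cohomology sheaves, not merely their alternating sum $\sum_{s=0}^n(-1)^{s(m-n)}\mf{A}_s$ (Proposition~\ref{prop:limsgeneral} with $k=n$). Factoring $\pi_n$ through $M\times\bb{G}(n,\bb{C}^m)$ as a regular embedding $s$ followed by the projection $p$, the direct image $\int_s\mc{M}_n^0$ is (as in the proof of Proposition~\ref{prop:EuleraslimPr}) a single $\GL$-equivariant sheaf, and $\int_{\pi_n}\mc{M}_n^0=\int_p\big(\int_s\mc{M}_n^0\big)$ with $\int_p$ amounting to $R\Gamma(\bb{G}(n,\bb{C}^m),-)$; since Bott's theorem concentrates the cohomology of each equivariant summand in one degree, one finds that $\mc{H}^i\big(\int_{\iota_n}\mc{O}_{O_n}\big)$ is $\GL$-admissible, vanishes unless $i\in\{s(m-n):0\leq s\leq n\}$, and, using that each $A_s=\mc{L}(M_{n-s},M)$ is simple, equals $A_s$ when $i=s(m-n)$. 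The distinguished triangle $R\Gamma_{M_{n-1}}\mc{O}_M\to\mc{O}_M\to\int_{\iota_n}\mc{O}_{O_n}\to R\Gamma_{M_{n-1}}\mc{O}_M[1]$, together with $\mc{H}^j_{M_{n-1}}(M,\mc{O}_M)=0$ for $j<\codim M_{n-1}=m-n+1$, then gives $\mc{H}^{1+s(m-n)}_{M_{n-1}}(M,\mc{O}_M)=A_s$ for $s=1,\dots,n$ (with $A_0=\mc{O}_M$); the identification $A_s=\mc{H}^{\codim M_{n-s}}_{M_{n-s}}(M,\mc{O}_M)$ follows analogously from the resolution $Y_{n-s}$ of $M_{n-s}$, comparing the codimension-degree cohomology of $\int_{\iota_{n-s}}\mc{O}_{O_{n-s}}$ with the inclusion $\mc{L}(M_{n-s},M)\subseteq\mc{H}^{\codim M_{n-s}}_{M_{n-s}}(M,\mc{O}_M)$. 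The main obstacle is precisely this last step: the Euler-characteristic computation does not by itself place the $A_s$ in prescribed cohomological degrees, so the argument genuinely needs the single-degree form of Bott's theorem on $\bb{G}(n,\bb{C}^m)$ together with a careful count matching the nonvanishing degrees to the progression $i=s(m-n)$; equivalently, it needs the vanishing of $\mc{H}^j_{M_{n-1}}(M,\mc{O}_M)$ outside the degrees $1+s(m-n)$ and the simplicity of the nonvanishing ones. By comparison, the character computation and the Fourier endgame are routine given Sections~\ref{sec:prelim}--\ref{sec:lims}.
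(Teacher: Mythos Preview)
Your derivation of the character identity $A_s=\mf{A}_s$ is essentially the paper's argument: same resolution $Y_k\to M$, same Euler-characteristic computation via Propositions~\ref{prop:EuleraslimPr} and~\ref{prop:limsgeneral} with Remark~\ref{rem:Euleraslim}, and the same Fourier/linear-algebra endgame via Lemma~\ref{lem:uppertriangularFourier}. One caveat: for Lemma~\ref{lem:uppertriangularFourier} you only need that $\mc{F}$ \emph{permutes} the set $\{A_s\}$, which is immediate since the Fourier transform is an equivalence preserving $\GL$-equivariance and simplicity. Your stronger claim $\mc{F}(A_s)=A_{n-s}$ via ``rotating characteristic varieties'' is not available at this point in the argument---Remark~\ref{rem:charvarieties} \emph{derives} the characteristic varieties from the character formulas, not conversely---so you should drop it and keep only the permutation statement together with the character-level identity $\mc{F}(\mf{A}_s)=\mf{A}_{n-s}$.

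Your treatment of the $m=n$ case has a genuine gap. The assertion that ``an induction on $n$ via transverse slices shows that the composition factors of $F_{s-1}$ lie among $A_0,\dots,A_{s-1}$'' is precisely the content one has to establish, and the transverse-slice remark is not a proof. The paper avoids geometry here entirely with a short induction on $s$ using only the characters: assuming the composition factors of $F_{s-1}$ are $A_0,\dots,A_{s-1}$, one has $S_{\det}/F_{s-1}=\bigoplus_{i\ge s}\mf{A}_i$ as a representation; any simple $\D$-submodule $A_i\subset S_{\det}/F_{s-1}$ then contains the class of $\det^{-i}$ (the unique occurrence of weight $(i^n;i^n)$), hence also the classes of $\det^{-i+1},\det^{-i+2},\dots$ since these are obtained by multiplying by $\det\in S\subset\D$; if $i>s$ the presence of the weight $(s^n;s^n)$ contradicts $A_i=\mf{A}_i$, so $i=s$ and $A_s=\langle\det^{-s}\rangle_{\D}/F_{s-1}=F_s/F_{s-1}$. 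No induction on $n$ is needed.

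For $m>n$ the paper takes a much shorter route to~(\ref{eq:Asloccoh}) than you do: once $A_s=\mf{A}_s$ is known, it simply compares with the independently computed characters of $\mc{H}^{1+s(m-n)}_{M_{n-1}}(M,\mc{O}_M)$ and $\mc{H}^{\codim(M_{n-s})}_{M_{n-s}}(M,\mc{O}_M)$ from \cite[Thm.~4.5]{raicu-weyman-witt} and \cite[Thm.~6.1]{raicu-weyman}; since those characters equal $\mf{A}_s$ and $A_s$ is simple, the modules coincide. Your attempt to read off the individual cohomology sheaves of $\int_{\iota_n}\mc{O}_{O_n}$ directly via Bott on $\bb{G}(n,\bb{C}^m)$ is a legitimate alternative, but---as you yourself flag---the Euler-characteristic formula alone does not pin each $A_s$ to the degree $s(m-n)$, and your sketch does not close that gap. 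Carrying it out would require the full Bott analysis of the equivariant summands of $\int_s\mc{M}_n^0$ and amounts to reproving a substantial part of \cite{raicu-weyman-witt}; within this paper's framework, the citation is the intended shortcut.
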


We only need to show that $\mf{A}_s$ is the character of $A_s$, and to prove Theorem~\ref{thm:nxnbasicthm}. The classification of $\GL$-equivariant holonomic simple $\D$-modules is explained in Theorem~\ref{thm:classificationDmods}, while (\ref{eq:Asloccoh}) follows by comparing $\mf{A}_s$ with the characters of local cohomology modules from \cite[Thm.~4.5]{raicu-weyman-witt} and \cite[Thm.~6.1]{raicu-weyman}. 

\begin{proof}[Proof of Theorem~\ref{thm:nxnbasicthm}] Let's assume for now that $\mf{A}_s$ is the character of $A_s$, and write $W_1=W_2=\bb{C}^n$. Using Cauchy's formula \cite[Cor.~2.3.3]{weyman}, we get an equality of $\GL$-representations
\[S_{\det}=\bigoplus_{\ll\in\bb{Z}^n_{\dom}}S_{\ll}W_1\oo S_{\ll}W_2=\bigoplus_{i=0}^n\mf{A}_i.\]
As in Example~\ref{ex:TactsCN}, this shows that $A_0,\cdots,A_n$ are the $\D$-module composition factors of $S_{\det}$, each appearing with multiplicity one. It remains to check that $A_s=F_s/F_{s-1}$ where $F_s=\langle\det^{-s}\rangle_{\D}$.

We prove by induction on $s$ that the $\D$-module composition factors of $F_s$ are $A_0,\cdots,A_s$, which is clearly true for $s=0$. Assume that $s>0$ and that the induction hypothesis is valid for $F_{s-1}$, so that $S_{\det}/F_{s-1}=\bigoplus_{i=s}^n\mf{A}_i$ as $\GL$-representations. We must then have for some $i\geq s$ an inclusion of $\D$-modules $A_i\subset S_{\det}/F_{s-1}$. Using the character description, $A_i$ must contain the class of $\det^{-i}$ inside the quotient $S_{\det}/F_{s-1}$, and therefore it must also contain the classes of $\det^{-i+1},\det^{-i+2},\cdots$. If $i>s$ this contradicts the formula for the character of $A_i$. We conclude that $i=s$ and that we have an inclusion $A_s\subset S_{\det}/F_{s-1}$. Since $A_s$ is simple, it is generated by the class of $\det^{-s}$, so the image of $A_s$ is $F_s/F_{s-1}$.
\end{proof}

We note that, just as in Remark~\ref{rem:bsato}, the strict inclusions $F_{i-1}\subsetneq F_i$, $i=1,\cdots,n$, in Theorem~\ref{thm:nxnbasicthm} combined with Cayley's identity show that the $b$-function of the generic determinant is $b_{\det}(s)=\prod_{i=1}^n(s+i)$.

We conclude by showing that $\mf{A}_s$ is the character of $A_s$. For $k=1,\cdots,n$, we consider the situation of Section~\ref{subsec:pfwdDmods}, with $X=X_k=\bb{G}(k,V_1)\times\bb{G}(k,V_2)$ and $\mc{R}_1,\mc{Q}_1,\mc{R}_2,\mc{Q}_2$ as in (\ref{eq:tautGr}). We let $U=V_1\oo V_2$, $\eta=\mc{Q}_1\oo\mc{Q}_2$, and write $Y=Y_k$, $\pi=\pi_k$ in (\ref{eq:diagrGeneric}). We note that locally on $X_k$, $\mc{Q}_1^*,\mc{Q}_2^*$ trivialize to vector spaces of dimension $k$, and $Y_k$ gets identified with the space of $k\times k$ matrices. We take the line bundle $\mc{L}=\det\mc{Q}_1\oo\det\mc{Q}_2$, consider its $\GL$-equivariant inclusion $\mc{L}\subset\Sym^k\eta$, and note that $\mc{L}$ is locally generated by the function that assigns to a matrix its determinant. If we let $Y_k^0\subset Y_k$ be the open set defined locally by the non-vanishing of the determinant, then as a $\GL$-equivariant quasi-coherent sheaf on $X_k$, $\mc{O}_{Y_k^0}$ is given~by

\[\mc{O}_{Y_k^0}=\bigoplus_{\ll\in\bb{Z}^k_{\dom}}S_{\ll}\mc{Q}_1\oo S_{\ll}\mc{Q}_2=\varinjlim_r \mc{L}^{\oo r}\oo\det\left(\mc{Q}_1^*\oo\mc{Q}_2^*\right)\oo\Sym\left(\mc{Q}_1^*\oo\mc{Q}_2^*\right),\]
so condition (\ref{eq:twolims}) is satisfied in our context. The Euler characteristic of the $\D$-module pushforward $\int_{\pi_k}\mc{O}_{Y_k^0}$ is now easily computed as a consequence of Propositions~\ref{prop:EuleraslimPr} and \ref{prop:limsgeneral}, and of Remark~\ref{rem:Euleraslim}:
\[
\begin{aligned}
\chi\left(\int_{\pi_k}\mc{O}_{Y_k^0}\right) &= (-1)^{k\cdot(m-n)}\cdot\lim_{r\to\infty} p_{k,r}(V_1)\oo p_{k,r}(V_2)\oo\det(W)\oo\Sym(W) \\
&=\sum_{s=n-k}^n (-1)^{(m-n)\cdot(n-k-s)}\cdot{s\choose s-n+k}\cdot \mf{A}_s.
\end{aligned}
\]
Since $\mc{O}_{Y_k^0}$ maps isomorphically via $\pi_k$ to the orbit of rank $k$ matrices in $M$, the conclusion that $\mf{A}_s$ is the character of $A_s$ follows as in the proof of Theorem~\ref{thm:symmetric} by the linear algebra trick in Section~\ref{subsec:linalg}.

\section{Equivariant $\D$-modules on skew-symmetric matrices}\label{sec:skew}

In this section we compute the characters of the $\GL$-equivariant $\D$-modules on the vector space of skew-symmetric $n\times n$ matrices. We let $W$ denote a complex vector space of dimension $n$, $V=W^*$, and we identify $\bw^2 W$ with the vector space $M^{\skew}$ of $n\times n$ skew-symmetric matrices, where exterior products $w_1\wedge w_2$ correspond to matrices of rank two. If we write $\GL=\GL(W)$, $m=\lfloor n/2\rfloor$, let $M^{\skew}_s$ denote the subvariety of matrices of rank at most $2s$, and recall the notation (\ref{eq:defbs}) for the characters $\mf{B}_s$ then we have:

\begin{charskewsym*}
 The simple $\GL$-equivariant holonomic $\D$-modules on $M^{\skew}$ are $B_s=\mc{L}(M_{m-s}^{\skew},M^{\skew})$, $s=0,\cdots,m$, and for each $s$ the character of $B_s$ is $\mf{B}_s$. If $n=2m+1$ is odd then for $s=1,\cdots,m$, $B_s$ can be described in terms of local cohomology:
 \begin{equation}\label{eq:Bsloccoh}
B_s=\mc{H}^{2s+1}_{M^{\skew}_{m-1}}(M^{\skew},\mc{O}_{M^{\skew}})=\mc{H}^{\codim(M_{m-s}^{\skew})}_{M_{m-s}^{\skew}}(M^{\skew},\mc{O}_{M^{\skew}}).
 \end{equation}
 If $n=2m$ is even, we let $\Pf$ be an equation defining the hypersurface $M^{\skew}_{m-1}$. We let $S$ denote the coordinate ring of $M^{\skew}$, and consider $F_s=\langle \Pf^{-2s}\rangle_{\D}$, the $\D$-submodule of the localization $S_{\Pf}$ generated by $\Pf^{-2s}$ for $s=0,\cdots,m$ (and $F_{-1}=0$). We have that $B_s=F_s/F_{s-1}$ for $s=0,\cdots,m$.
\end{charskewsym*}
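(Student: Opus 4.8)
The plan is to run, for skew-symmetric matrices, the argument that proves Theorem~\ref{thm:symmetric} (for the character statement) and Theorem~\ref{thm:nxnbasicthm} together with its $m>n$ analogue (for the two explicit realizations), with the Grassmannian $\bb{G}(2k,V)$ playing the role of $X_k$ and the Pfaffian playing the role of the determinant. The classification of the simple $\GL$-equivariant holonomic $\D$-modules $B_s=\mc{L}(M^{\skew}_{m-s},M^{\skew})$, $s=0,\cdots,m$, is Theorem~\ref{thm:classificationDmods}; since they have pairwise distinct supports $M^{\skew}_{m-s}$ they are linearly independent in $\G(W)$, and, since the sets $\mc{B}(s,n)$ partition $\{\ll\in\bb{Z}^n_{\dom}:\ll_{2i-1}=\ll_{2i}\}$, so are the candidate characters $\mf{B}_s$. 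It remains to prove $B_s=\mf{B}_s$ in $\G(W)$, and then to identify the $B_s$ as the asserted local cohomology modules (for $n$ odd) and pole-order subquotients (for $n$ even).

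\textbf{The Euler characteristic computation.} For $k=1,\cdots,m$ I would place myself in the situation of Section~\ref{subsec:pfwdDmods} with $X=X_k=\bb{G}(2k,V)$, $U=\bw^2 V$, $\eta=\bw^2\mc{Q}$, so that $Y_k=\rm{Tot}_{X_k}(\bw^2\mc{Q}^*)$ is locally the space of $2k\times 2k$ skew-symmetric matrices and $\pi_k\colon Y_k\to\bw^2 W=M^{\skew}$ is as in (\ref{eq:diagrGeneric}); I take $\mc{L}=\det\mc{Q}$, with the $\GL$-equivariant inclusion $\mc{L}\subset\Sym^k(\bw^2\mc{Q})$ given locally by the Pfaffian, and let $Y_k^0\subset Y_k$ be the locus where the Pfaffian is invertible, so that $\pi_k$ restricts to an isomorphism $Y_k^0\xrightarrow{\sim}O_k$ onto the orbit of rank-$2k$ matrices. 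Using $\Sym(\bw^2\mc{Q}^*)=\bigoplus S_\nu\mc{Q}^*$ over partitions $\nu$ with $\nu_{2i-1}=\nu_{2i}$, one checks that as a $\GL$-equivariant sheaf on $X_k$
\[\mc{M}_k^0=\mc{O}_{Y_k^0}=\bigoplus_{\ll\in\bb{Z}^{2k}_{\dom},\ \ll_{2i-1}=\ll_{2i}}S_{\ll}\mc{Q}=\varinjlim_r(\det\mc{Q})^{\oo r}\oo\det(\bw^2\mc{Q}^*)\oo\Sym(\bw^2\mc{Q}^*),\]
so that (\ref{eq:twolims}) holds; here, unlike the symmetric case, there is no parity restriction on $r$. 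Proposition~\ref{prop:EuleraslimPr} with $\mc{L}'=\mc{O}_{X_k}$, Remark~\ref{rem:Euleraslim} (which gives $P_r(X_k,\mc{O}_{X_k}(1);\mc{O}_{X_k})=p_{2k,r}(V)$, the sign $(-1)^{2k(n-2k)}$ being trivial), and Proposition~\ref{prop:limskew} then yield
\[\chi\left(\int_{\pi_k}\mc{M}_k^0\right)=\lim_{r\to\infty}p_{2k,r}(V)\oo E=\sum_{s=m-k}^m\binom{s}{m-k}\mf{B}_s,\qquad E=\det(\bw^2 W)\oo\Sym(\bw^2 W),\]
the admissibility hypothesis of Proposition~\ref{prop:EuleraslimPr} being supplied by Bott's theorem on $X_k$.

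\textbf{From Euler characteristics to characters.} Since $\int_{\pi_k}\mc{M}_k^0=\int_{\iota_k}\mc{O}_{O_k}$ for the inclusion $\iota_k\colon O_k\hookrightarrow M^{\skew}$, the cohomology sheaves of this complex are $\GL$-equivariant holonomic $\D$-modules supported on $\ol{O_k}=M^{\skew}_k$; their composition factors therefore lie among the $B_s$ with $s\geq m-k$, the only one with full support $M^{\skew}_k=\ol{O_k}$ is $B_{m-k}=\mc{L}(M^{\skew}_k,M^{\skew})$, and it occurs in $\chi\bigl(\int_{\iota_k}\mc{O}_{O_k}\bigr)$ with multiplicity $1$ (because $\iota_k$ is a closed embedding of the smooth orbit $O_k$ away from $M^{\skew}_k\setminus O_k$). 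Comparing with the displayed formula, whose $\mf{B}_{m-k}$-coefficient is also $1$, and using the base case $k=0$ — where $B_m=E$ has character $\mf{B}_m$ by Cauchy's formula — a downward induction on $m-k$ yields integers $a^p_q$ with $B_p=\mf{B}_p+\sum_{q>p}a^p_q\mf{B}_q$ for $p=0,\cdots,m$. To conclude I would invoke the Fourier transform $\mc{F}=\mc{F}_U$ relative to $U=\bw^2 W$: it permutes the simple equivariant $\D$-modules, which are exactly the $B_p$, and a direct computation from (\ref{eq:defGFourierU}) and (\ref{eq:defmcBsn}) shows that on characters it acts by the order-reversing involution $\mf{B}_p\mapsto\mf{B}_{m-p}$ of $\{0,\cdots,m\}$; Lemma~\ref{lem:uppertriangularFourier} then forces all $a^p_q=0$, i.e. $B_s=\mf{B}_s$.

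\textbf{The two explicit realizations, and the main obstacle.} Once $B_s=\mf{B}_s$ is known, any $\GL$-equivariant holonomic $\D$-module with character $\mf{B}_s$ must equal $B_s$ (its composition factors lie among the $B_t$, and the $\mf{B}_t$ are linearly independent). For $n=2m+1$ the equalities in (\ref{eq:Bsloccoh}) then follow exactly as (\ref{eq:Asloccoh}) does for general matrices: one matches $\mf{B}_s$ against the characters of local cohomology modules with Pfaffian support computed in \cites{raicu-weyman,raicu-weyman-witt}. For $n=2m$ the identity $B_s=F_s/F_{s-1}$ for $F_s=\langle\Pf^{-2s}\rangle_{\D}$ is proved exactly as Theorem~\ref{thm:nxnbasicthm}: the Cauchy-type decomposition $S_{\Pf}=\bigoplus_{\ll\in\bb{Z}^n_{\dom},\ \ll_{2i-1}=\ll_{2i}}S_{\ll}W=\bigoplus_{s=0}^m\mf{B}_s$ shows that $B_0,\cdots,B_m$ are the composition factors of $S_{\Pf}$, each of multiplicity one; since $\Pf^{-2s}$ spans the one-dimensional $\GL$-subrepresentation of weight $((2s)^n)\in\mc{B}(s,2m)$, an induction on $s$ — using the character of $B_i$ to pin down which powers of $\Pf$ a simple subquotient of $S_{\Pf}/F_{s-1}$ can contain — identifies the composition factors of $F_s$ as $B_0,\cdots,B_s$ and the quotient $F_s/F_{s-1}$ as $B_s$. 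I expect the only real obstacle to be bookkeeping: extracting the $\GL$-equivariant decomposition of $\mc{O}_{Y_k^0}$ and the $k$-dependence of the signs from Remark~\ref{rem:Euleraslim}, and carrying out the Fourier computation $\mc{F}(\mf{B}_s)=\mf{B}_{m-s}$ uniformly in the parity of $n$ — the odd case being the one where this reflection carries the conormal variety of the rank-$2s$ orbit past the generic (rank $2m$) locus.
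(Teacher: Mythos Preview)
Your proposal is correct and follows essentially the same route as the paper: the same geometric setup with $X_k=\bb{G}(2k,V)$, $\eta=\bw^2\mc{Q}$, $\mc{L}=\det\mc{Q}$, the Euler characteristic computed via Proposition~\ref{prop:EuleraslimPr}, Remark~\ref{rem:Euleraslim} and Proposition~\ref{prop:limskew}, and the upper-triangular system resolved by the Fourier transform and Lemma~\ref{lem:uppertriangularFourier}; the explicit realizations for $n$ odd and $n$ even are likewise handled exactly as the paper does, by citing the local cohomology character formulas and by rerunning the argument of Theorem~\ref{thm:nxnbasicthm}. You have simply spelled out in more detail what the paper compresses into ``conclude as in the proof of Theorem~\ref{thm:symmetric}'' and ``as in the proof of Theorem~\ref{thm:nxnbasicthm}''.
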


The classification of $\GL$-equivariant holonomic simple $\D$-modules is explained in Theorem~\ref{thm:classificationDmods}, while the equality (\ref{eq:Bsloccoh}) follows from \cite[Theorem~5.5]{raicu-weyman-witt} and \cite[(1.4)]{raicu-weyman-loccoh}. When $n=2m$, we get that $B_s=F_s/F_{s-1}$ just as in the proof of Theorem~\ref{thm:nxnbasicthm}. Note that Cayley's identity shows that $b_{\Pf}(s)$ divides $\prod_{i=1}^m(s+2\cdot i-1)$, which in turn implies that $\langle \Pf^{-2i}\rangle_{\D}=\langle \Pf^{-2i+1}\rangle_{\D}$. The strict inclusions $F_{i-1}\subsetneq F_i$ then force $2\cdot i-1$ to be a root of $b_{\Pf}(s)$ for $i=1,\cdots,m$, so in fact $b_{\Pf}(s)=\prod_{i=1}^m(s+2\cdot i-1)$.

To prove the theorem, it remains to check that $\mf{B}_s$ is the character of~$B_s$. For $k=1,\cdots,m$, we consider the situation of Section~\ref{subsec:pfwdDmods}, with $X=X_k=\bb{G}(2k,V)$ and $\mc{R},\mc{Q}$ as in (\ref{eq:tautGr}). We let $U=\bw^2 V$, $\eta=\bw^2\mc{Q}$, and write $Y=Y_k$, $\pi=\pi_k$ in (\ref{eq:diagrGeneric}). Locally on $X_k$, $\mc{Q}^*$ trivializes to a vector space of dimension $2k$, and $Y_k$ gets identified with the space of $2k\times 2k$ skew-symmetric matrices. We take the line bundle $\mc{L}=\det\mc{Q}$ to be the Pl\"ucker line bundle on $X$, consider its $\GL$-equivariant inclusion $\mc{L}\subset\Sym^k\eta$, and note that $\mc{L}$ is locally generated by the function that assigns to a skew-symmetric matrix its Pfaffian. If we let $Y_k^0\subset Y_k$ be the open set defined locally by the non-vanishing of the Pfaffian, then we get using Cauchy's formula \cite[Prop.~2.3.8]{weyman} that condition (\ref{eq:twolims}) is satisfied. As a consequence of Propositions~\ref{prop:EuleraslimPr} and \ref{prop:limskew}, and of Remark~\ref{rem:Euleraslim} we obtain
\begin{equation}\label{eq:Dmodpfwdskew}
\chi\left(\int_{\pi_k}\mc{O}_{Y_k^0}\right) = \lim_{r\to\infty} p_{2k,r}(V)\oo\det\left(\bw^2 W\right)\oo\Sym\left(\bw^2 W\right)=\sum_{s=m-k}^m {s\choose m-k}\cdot \mf{B}_s. 
\end{equation}
Since $\mc{O}_{Y_k^0}$ maps isomorphically via $\pi_k$ to the orbit of rank $2k$ matrices in $M^{\skew}$, we conclude as in the proof of Theorem~\ref{thm:symmetric} that $\mf{B}_s$ is the character of $B_s$ for all $s$.

\section{The simple regular holonomic $\D$-modules on rank stratifications}\label{sec:simplesrankstrat}

We let $X$ denote any of the vector spaces of general, symmetric, or skew-symmetric matrices, with the natural group action by row and column operations of the corresponding group $G$ as considered in the previous sections. We denote by $\Lambda$ the union of conormal varieties to the orbits of $G$, and consider the category $\mc{C}=\rm{mod}_{\Lambda}^{rh}(\D_X)$ of regular holonomic $\D_X$-modules whose characteristic variety is contained in $\Lambda$. The goal of this section is to describe explicitly the simple objects in $\mc{C}$ and obtain as a corollary a direct proof of Levasseur's conjecture \cite[Conj.~5.17]{levasseur} in the case of general and skew-symmetric matrices.

Via the Riemann--Hilbert correspondence, the simple objects in $\mc{C}$ are classified by irreducible local systems on the $G$-orbits. When the local systems are $G$-equivariant, the corresponding $\D_X$-modules have been described in the previous sections. The only orbits with irreducible non-equivariant local systems are the orbits $O\subset X$ of rank $n$ matrices, when $X$ is the vector space of $n\times n$ general or symmetric matrices, or when $X$ is the vector space of $2n\times 2n$ skew-symmetric matrices. In each of these cases, the complement of $O$ in $X$ is defined by a single polynomial $f$ which is the determinant of the generic (symmetric) $n\times n$ matrix in the first two cases, and it is the Pfaffian of the generic $2n\times 2n$ skew-symmetric matrix in the last case. The fundamental group of $O$ is equal to $\bb{Z}$, so the monodromy of the corresponding local system is given by a non-zero complex number $\ll=e^{2\pi i\a}$ with $\a\in\bb{C}/\bb{Z}$. We let $S$ denote the coordinate ring of $X$ and for $\a\in\bb{C}$ we consider the $\D_X$-module $F_{\a}=S_f\cdot f^{\a}$ (which only depends on the class of $\a$ in $\bb{C}/\bb{Z}$).

\begin{theorem}\label{thm:nonequivLa}
 With notation as above, consider the irreducible local system $L_{\a}$ on $O$ whose monodromy is given by $\ll=e^{2\pi i\a}$. If $L_{\a}$ is not $G$-equivariant then the corresponding simple object in $\operatorname{mod}_{\Lambda}^{rh}(\D_X)$ is $F_{\a}$.
\end{theorem}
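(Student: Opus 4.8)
The plan is to establish that $F_{\a}=S_f\cdot f^{\a}$ is a simple regular holonomic $\D_X$-module which lies in $\operatorname{mod}_{\Lambda}^{rh}(\D_X)$ and whose restriction to the open orbit $O$ is the $\D_O$-module underlying $L_{\a}$. Granting this, the Riemann--Hilbert classification of the simple objects of $\operatorname{mod}_{\Lambda}^{rh}(\D_X)$ recalled above (parametrized by irreducible local systems on the orbits) forces $F_{\a}$ to be the simple object attached to $L_{\a}$, which is exactly the assertion.

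First I would record the easy structural facts. On $O$ the polynomial $f$ is invertible and $\mc{O}_O\cdot f^{\a}$ is the rank one integrable connection with connection form $\a\,df/f$; it has regular singularities and monodromy $e^{2\pi i\a}$ around $\{f=0\}$, so it is precisely the $\D_O$-module corresponding to $L_{\a}$. Since $j\colon O\hookrightarrow X$ is the complement of the hypersurface $\{f=0\}$, the $\D$-module direct image $\int_j(\mc{O}_O\cdot f^{\a})$ is concentrated in cohomological degree $0$ and equals $S_f\cdot f^{\a}=F_{\a}$; hence $F_{\a}$ is regular holonomic with $F_{\a}|_O\cong L_{\a}$. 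Its de Rham complex $\operatorname{DR}(F_{\a})\simeq Rj_{*}\bigl(L_{\a}[\dim X]\bigr)$ is constructible with respect to the stratification of $X$ by $G$-orbits, so $\operatorname{Ch}(F_{\a})\subseteq\Lambda$ and $F_{\a}\in\operatorname{mod}_{\Lambda}^{rh}(\D_X)$. Finally, multiplication by $f$ is invertible on $F_{\a}=S_f\cdot f^{\a}$, so $F_{\a}$ has no nonzero $\D_X$-submodule supported on $\{f=0\}$.

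The arithmetic input is a comparison between ``$L_{\a}$ is not $G$-equivariant'' and the Bernstein--Sato polynomial of $f$. By Theorem~\ref{thm:classificationDmods} the isotropy group of $O$ has component group trivial for general and skew-symmetric matrices and $\bb{Z}/2\bb{Z}$ for symmetric matrices; hence the $G$-equivariant irreducible local systems on $O$ are the $L_{\a}$ with $\a\in\bb{Z}$ in the first two cases, and with $\a\in\tfrac12\bb{Z}$ in the symmetric case. On the other hand $b_{\det}(s)=\prod_{i=1}^{n}(s+i)$, $b_{\Pf}(s)=\prod_{i=1}^{n}(s+2i-1)$ and $b_{\sdet}(s)=\prod_{i=1}^{n}\bigl(s+\tfrac{i+1}{2}\bigr)$ have all of their roots in $\bb{Z}$, $\bb{Z}$ and $\tfrac12\bb{Z}$ respectively. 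Therefore, if $L_{\a}$ is \emph{not} $G$-equivariant then $b_f(\a+j)\neq 0$ for every $j\in\bb{Z}$. Substituting $\a+j$ into the functional equation $P(s)\,f^{s+1}=b_f(s)\,f^{s}$ gives $\D_X\cdot f^{\a+j}=\D_X\cdot f^{\a+j+1}$ for all $j$, and since $S_f\cdot f^{\a}=\bigcup_{j}\D_X\cdot f^{\a+j}$ we conclude that $\D_X\cdot f^{\a+j}=F_{\a}$ for \emph{every} $j\in\bb{Z}$; in particular $F_{\a}$ is generated over $\D_X$ by any single power $f^{\a+j}$.

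Simplicity then follows formally. Let $0\neq N\subseteq F_{\a}$ be a $\D_X$-submodule. By the last sentence of the second paragraph $N$ is not supported on $\{f=0\}$, so $N|_O$ is a nonzero $\D_O$-submodule of the rank one connection $L_{\a}$, whence $N|_O=L_{\a}$ and $Q:=F_{\a}/N$ is supported on $\{f=0\}$. But then the localization $Q_f$ vanishes, so $f^{m}\cdot\overline{f^{\a}}=0$ in $Q$ for some $m\geq 1$; equivalently the image $\overline{f^{\a+m}}$ of $f^{\a+m}$ in $Q$ is zero. Since $f^{\a+m}$ generates $F_{\a}$, its image generates $Q$, and we conclude $Q=0$, i.e. $N=F_{\a}$. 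Thus $F_{\a}$ is simple, and by the reduction in the first paragraph it is the simple object of $\operatorname{mod}_{\Lambda}^{rh}(\D_X)$ corresponding to $L_{\a}$. The step requiring the most care is the third paragraph --- the case-by-case bookkeeping matching the lattice of parameters $\a$ giving $G$-equivariant local systems with the location of the roots of $b_f$, together with the (standard) Bernstein--Sato manipulation; once $\D_X\cdot f^{\a+j}=F_{\a}$ is known for all $j$, the simplicity argument is purely formal.
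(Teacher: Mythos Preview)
Your proof is correct and reaches the same conclusion as the paper, but the simplicity step takes a genuinely different route. Both arguments begin identically: the restriction $F_{\a}|_O$ is the rank-one connection with monodromy $e^{2\pi i\a}$, and Cayley's identity (equivalently, the explicit $b$-functions you quote) shows that when $L_{\a}$ is non-equivariant one has $\D_X\cdot f^{\a+j}=F_{\a}$ for every $j\in\bb{Z}$. From here the paper proves simplicity via the $\mf{g}=\Lie(G)$-module structure: $F_{\a}$ decomposes multiplicity-freely as $\bigoplus M\cdot f^{\a}$ with $M$ ranging over irreducible $\mf{g}$-summands of $S_f$, so any nonzero $\D_X$-submodule $F$ contains some $M\cdot f^{\a}$; after a shift one may take $M\subset S$, and then the $G$-invariant ideal generated by $M$ cuts out a proper $G$-stable closed subset of $X$, hence is contained set-theoretically in $\{f=0\}$ and contains a power of $f$, giving $f^{r+\a}\in F$ for $r\gg 0$.

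Your argument is instead purely support-theoretic: $f$ acts invertibly on $F_{\a}$, so a nonzero submodule $N$ cannot be supported on $\{f=0\}$, hence $N|_O$ equals the simple rank-one connection, the quotient $F_{\a}/N$ is supported on $\{f=0\}$, and therefore $f^{\a+m}\in N$ for some $m$. This is more elementary and portable---it uses nothing about the $\GL$-representation structure and would work verbatim for any semi-invariant $f$ cutting out the complement of a dense orbit once the roots of $b_f$ are located. The paper's approach, by contrast, stays within the representation-theoretic framework of the article and sidesteps the (standard but not entirely trivial) verification that $\operatorname{DR}(F_{\a})\simeq Rj_*L_{\a}[\dim X]$ is constructible along the orbit stratification, which you invoke to place $F_{\a}$ in $\operatorname{mod}_{\Lambda}^{rh}(\D_X)$; the paper leaves that membership implicit.
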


\begin{proof} The restriction of $F_{\a}$ to $O$ is a rank one integrable connection whose corresponding local system has monodromy given by $\ll=e^{2\pi i\a}$. It follows that in order to prove the theorem we need to check that $F_{\a}$ is a simple $\D_X$-module. The condition that $L_{\a}$ is not $G$-equivariant is equivalent to (see Theorems~\ref{thm:equivRH} and~\ref{thm:classificationDmods})
\begin{itemize}
 \item $\a\notin\bb{Z}$ if $X$ is the space of general or skew-symmetric matrices.
 \item $\a\notin\frac{1}{2}\bb{Z}$ if $X$ is the space of symmetric matrices.
\end{itemize}
From now on we assume that $L_{\a}$ is not $G$-equivariant. It follows from the Cayley's identity (and its symmetric and skew-symmetric versions) that $F_{\a}$ is generated as a $\D_X$-module by $f^{\a}$ (or by $f^{r+\a}$ for any $r\in\bb{Z}$). In order to prove that $F_{\a}$ is simple, it is then sufficient to show that any non-zero $\D_X$-submodule $F\subset F_{\a}$ contains $f^{r+\a}$ for $r\gg 0$. Fix any such $F$.

We write $\mf{g}$ for the Lie algebra of $G$, and note that any $\D_X$-module is a $\mf{g}$-representation. In particular this is true about $F\subset F_{\a}$. Since $F_{\a}$ has a multiplicity free decomposition into irreducible $\mf{g}$-representations of the form $M\cdot f^{\a}$, where $M\subset S_f$ is an irreducible integral $\mf{g}$-representation, we may assume that $F$ contains one such $M\cdot f^{\a}$. Replacing $\a$ by $\a-r$ and $M$ by $M\cdot f^r$ for $r\in\bb{Z}$, we may assume that $M\subset S$. Since $M$ generates a non-zero ideal which is invariant under the action of $G$, it defines set-theoretically a proper closed $G$-invariant subset of $X$, which is necessarily contained in the zero locus of $f$ (the complement of $f$ is a dense orbit for the $G$-action). We obtain that the ideal in $S$ generated by $M$ contains all large enough powers of $f$, and therefore that $F$ contains $f^{r+\a}$ for $r\gg 0$, which concludes the proof of the theorem.
\end{proof}

We end by remarking that Theorem~\ref{thm:nonequivLa} yields a proof of Levasseur's conjecture in the case of general and skew-symmetric matrices. We have already seen that the irreducible $G$-equivariant local systems on the orbits of the group action give rise to simple $\D_X$-modules containing (and hence generated by) non-zero sections invariant under the action of the derived subgroup $G'$. By Theorem~\ref{thm:nonequivLa}, the remaining simple objects of $\mc{C}$ are all of the form $F_{\a}=S_f\cdot f^{\a}$. Since $f$ is a $G'$-invariant, the same is true about $f^{\a}$, so $F_{\a}$ contains non-zero $G'$-invariant sections.

\section*{Acknowledgments} 
I am grateful to Nero Budur, David Eisenbud, Sam Evens, Mircea Musta\c t\u a, Uli Walther and Jerzy Weyman for interesting conversations and helpful advice, as well as to the anonymous referee for suggesting many improvements to the presentation. Experiments with the computer algebra software Macaulay2 \cite{M2} have provided numerous valuable insights. This work was supported by the National Science Foundation Grant No.~1458715.


	\begin{bibdiv}
		\begin{biblist}

\bib{borel}{book}{
   author={Borel, A.},
   author={Grivel, P.-P.},
   author={Kaup, B.},
   author={Haefliger, A.},
   author={Malgrange, B.},
   author={Ehlers, F.},
   title={Algebraic $D$-modules},
   series={Perspectives in Mathematics},
   volume={2},
   publisher={Academic Press, Inc., Boston, MA},
   date={1987},
   pages={xii+355},
   isbn={0-12-117740-8},
   review={\MR{882000 (89g:32014)}},
}

\bib{braden-grinberg}{article}{
   author={Braden, Tom},
   author={Grinberg, Mikhail},
   title={Perverse sheaves on rank stratifications},
   journal={Duke Math. J.},
   volume={96},
   date={1999},
   number={2},
   pages={317--362},
   issn={0012-7094},
   review={\MR{1666554 (2000f:14031)}},
   doi={10.1215/S0012-7094-99-09609-6},
}

\bib{coutinho-levcovitz}{article}{
   author={Coutinho, S. C.},
   author={Levcovitz, D.},
   title={$\D$-modules and \'etale morphisms},
   journal={Comm. Algebra},
   volume={29},
   date={2001},
   number={4},
   pages={1487--1497},
   issn={0092-7872},
   review={\MR{1853107 (2002m:14014)}},
   doi={10.1081/AGB-100002114},
}

\bib{M2}{article}{
          author = {Grayson, Daniel R.},
          author = {Stillman, Michael E.},
          title = {Macaulay 2, a software system for research
                   in algebraic geometry},
          journal = {Available at \url{http://www.math.uiuc.edu/Macaulay2/}}
        }

\bib{hartshorne}{book}{
   author={Hartshorne, Robin},
   title={Algebraic geometry},
   note={Graduate Texts in Mathematics, No. 52},
   publisher={Springer-Verlag},
   place={New York},
   date={1977},
   pages={xvi+496},
   isbn={0-387-90244-9},
   review={\MR{0463157 (57 \#3116)}},
}

\bib{hottaetal}{book}{
   author={Hotta, Ryoshi},
   author={Takeuchi, Kiyoshi},
   author={Tanisaki, Toshiyuki},
   title={$D$-modules, perverse sheaves, and representation theory},
   series={Progress in Mathematics},
   volume={236},
   note={Translated from the 1995 Japanese edition by Takeuchi},
   publisher={Birkh\"auser Boston, Inc., Boston, MA},
   date={2008},
   pages={xii+407},
   isbn={978-0-8176-4363-8},
   review={\MR{2357361 (2008k:32022)}},
   doi={10.1007/978-0-8176-4523-6},
}

\bib{kashiwara-bs}{article}{
   author={Kashiwara, Masaki},
   title={$B$-functions and holonomic systems. Rationality of roots of
   $B$-functions},
   journal={Invent. Math.},
   volume={38},
   date={1976/77},
   number={1},
   pages={33--53},
   issn={0020-9910},
   review={\MR{0430304 (55 \#3309)}},
}

\bib{kempf}{article}{
   author={Kempf, George},
   title={The Grothendieck-Cousin complex of an induced representation},
   journal={Adv. in Math.},
   volume={29},
   date={1978},
   number={3},
   pages={310--396},
   issn={0001-8708},
   review={\MR{509802 (80g:14042)}},
   doi={10.1016/0001-8708(78)90021-X},
}

\bib{kimura}{book}{
   author={Kimura, Tatsuo},
   title={Introduction to prehomogeneous vector spaces},
   series={Translations of Mathematical Monographs},
   volume={215},
   note={Translated from the 1998 Japanese original by Makoto Nagura and
   Tsuyoshi Niitani and revised by the author},
   publisher={American Mathematical Society, Providence, RI},
   date={2003},
   pages={xxii+288},
   isbn={0-8218-2767-7},
   review={\MR{1944442 (2003k:11180)}},
}

\bib{levasseur}{article}{
   author={Levasseur, Thierry},
   title={Radial components, prehomogeneous vector spaces, and rational
   Cherednik algebras},
   journal={Int. Math. Res. Not. IMRN},
   date={2009},
   number={3},
   pages={462--511},
   issn={1073-7928},
   review={\MR{2482122 (2010g:22028)}},
   doi={10.1093/imrn/rnn137},
}

\bib{macdonald}{book}{
   author={Macdonald, I. G.},
   title={Symmetric functions and Hall polynomials},
   series={Oxford Mathematical Monographs},
   edition={2},
   note={With contributions by A. Zelevinsky;
   Oxford Science Publications},
   publisher={The Clarendon Press Oxford University Press},
   place={New York},
   date={1995},
   pages={x+475},
   isbn={0-19-853489-2},
   review={\MR{1354144 (96h:05207)}},
}

\bib{mac-vil}{article}{
   author={MacPherson, Robert},
   author={Vilonen, Kari},
   title={Elementary construction of perverse sheaves},
   journal={Invent. Math.},
   volume={84},
   date={1986},
   number={2},
   pages={403--435},
   issn={0020-9910},
   review={\MR{833195 (87m:32028)}},
   doi={10.1007/BF01388812},
}

\bib{nang-nxnmat}{article}{
   author={Nang, Philibert},
   title={On a class of holonomic $\D$-modules on $M_n(\bb C)$
   related to the action of ${\textrm GL}_n(\bb C)\times {\textrm GL}_n(\bb C)$},
   journal={Adv. Math.},
   volume={218},
   date={2008},
   number={3},
   pages={635--648},
   issn={0001-8708},
   review={\MR{2414315 (2009c:14028)}},
   doi={10.1016/j.aim.2008.01.001},
}

\bib{nang-skew}{article}{
   author={Nang, Philibert},
   title={On the classification of regular holonomic $\D$-modules on
   skew-symmetric matrices},
   journal={J. Algebra},
   volume={356},
   date={2012},
   pages={115--132},
   issn={0021-8693},
   review={\MR{2891125}},
   doi={10.1016/j.jalgebra.2012.01.021},
}

\bib{raicu-VeroDmods}{article}{
        author = {Raicu, Claudiu},
        title = {Characters of equivariant $\D$-modules on Veronese cones},
	journal = {arXiv},
	number = {1412.8148},
	note = {To appear in Trans. AMS},
	date={2014}
}

\bib{raicu-weyman}{article}{
   author={Raicu, Claudiu},
   author={Weyman, Jerzy},
   title={Local cohomology with support in generic determinantal ideals},
   journal={Algebra \& Number Theory},
   volume={8},
   date={2014},
   number={5},
   pages={1231--1257},
   issn={1937-0652},
   review={\MR{3263142}},
   doi={10.2140/ant.2014.8.1231},
}

\bib{raicu-weyman-witt}{article}{
   author={Raicu, Claudiu},
   author={Weyman, Jerzy},
   author={Witt, Emily E.},
   title={Local cohomology with support in ideals of maximal minors and
   sub-maximal Pfaffians},
   journal={Adv. Math.},
   volume={250},
   date={2014},
   pages={596--610},
   issn={0001-8708},
   review={\MR{3122178}},
   doi={10.1016/j.aim.2013.10.005},
}

\bib{raicu-weyman-loccoh}{article}{
   author={Raicu, Claudiu},
   author={Weyman, Jerzy},
   title={Local cohomology with support in ideals of symmetric minors and Pfaffians},
   journal = {arXiv},
   number = {1509.03954},
   date={2015}
}

\bib{saito}{article}{
   author={Saito, Morihiko},
   title={$\D$-modules generated by rational powers of holomorphic functions},
   journal = {arXiv},
   number = {1507.01877},
   date={2015}
}

\bib{VdB:loccoh}{article}{
   author={Van den Bergh, Michel},
   title={Local cohomology of modules of covariants},
   journal={Adv. Math.},
   volume={144},
   date={1999},
   number={2},
   pages={161--220},
   issn={0001-8708},
   review={\MR{1695237 (2000d:14051)}},
   doi={10.1006/aima.1998.1809},
}

\bib{vilonen}{article}{
   author={Vilonen, K.},
   title={Intersection homology $\D$-module on local complete intersections
   with isolated singularities},
   journal={Invent. Math.},
   volume={81},
   date={1985},
   number={1},
   pages={107--114},
   issn={0020-9910},
   review={\MR{796193 (87f:32023)}},
   doi={10.1007/BF01388774},
}

\bib{walther-fs}{article}{
   author={Walther, Uli},
   title={Survey on the $\D$-module $f^s$},
   journal = {arXiv},
   number = {1504.07516},
   date={2015}
}

\bib{weyman}{book}{
   author={Weyman, Jerzy},
   title={Cohomology of vector bundles and syzygies},
   series={Cambridge Tracts in Mathematics},
   volume={149},
   publisher={Cambridge University Press},
   place={Cambridge},
   date={2003},
   pages={xiv+371},
   isbn={0-521-62197-6},
   review={\MR{1988690 (2004d:13020)}},
   doi={10.1017/CBO9780511546556},
}

		\end{biblist}
	\end{bibdiv}

\end{document}